\newtheorem{theorem}{Theorem}[section]
\newtheorem{lemma}[theorem]{Lemma}
\newtheorem{proposition}[theorem]{Proposition}
\newtheorem{corollary}[theorem]{Corollary}
\theoremstyle{definition}
\newtheorem{example}[theorem]{Example}
\theoremstyle{remark}
\numberwithin{equation}{section}
\providecommand{\rad}{\mathop{\rm rad}\nolimits}
\renewcommand{\mod}{\mathop{\rm mod}\nolimits}%
\providecommand{\add}{\mathop{\rm add}\nolimits}%
\providecommand{\ann}{\mathop{\rm ann}\nolimits}%
\providecommand{\gldim}{\mathop{\rm gl.dim}\nolimits}%
\providecommand{\pd}{\mathop{\rm pd}\nolimits}%
\providecommand{\id}{\mathop{\rm id}\nolimits}%
\providecommand{\op}{\mathop{\rm op}\nolimits}%
\providecommand{\Hom}{\mathop{\rm Hom}\nolimits}%
\providecommand{\Ext}{\mathop{\rm Ext}\nolimits}%
\providecommand{\Tor}{\mathop{\rm Tor}\nolimits}%
\providecommand{\TrD}{\mathop{{\rm Tr}D}\nolimits}%
\providecommand{\DTr}{\mathop{D\rm Tr}\nolimits}%
\providecommand{\End}{\mathop{\rm End}\nolimits}%
\providecommand{\soc}{\mathop{\rm soc}\nolimits}%
\providecommand{\rep}{\mathop{\rm rep}\nolimits}%
\renewcommand{\top}{\mathop{\rm top}\nolimits}%
\title[Components without external short paths]{On Auslander-Reiten components of algebras without external short paths}
\author{Alicja Jaworska}
\address{Faculty of Mathematics and Computer Science, Nicolaus Copernicus University, Chopina 12/18, 87-100 Toru\'n, Poland}
\email{jaworska@mat.uni.torun.pl}
\thanks{The research supported by the Research Grant N N201 269135 of the Polish Ministry of Science and Higher Education.}
\author{Piotr Malicki}
\address{Faculty of Mathematics and Computer Science, Nicolaus Copernicus University, Chopina 12/18, 87-100 Toru\'n, Poland}
\email{pmalicki@mat.uni.torun.pl}
\author{Andrzej Skowro\'nski}
\address{Faculty of Mathematics and Computer Science, Nicolaus Copernicus University, Chopina 12/18, 87-100 Toru\'n, Poland}
\email{skowron@mat.uni.torun.pl}
\subjclass[2010]{Primary 16G10, 16G70; Secondary 16D50}
\dedicatory{Dedicated to Daniel Simson on the occasion of his seventieth birthday.}
\begin{document}
\maketitle

\begin{abstract}
We describe the structure of semi-regular Auslander-Reiten components of artin algebras without
external short paths in the module category. As an application we give a complete description of
self-injective artin algebras whose Auslander-Reiten quiver admits a regular acyclic component
without external short paths.
\end{abstract}

\section{Introduction and the main results}
\label{intro}

\noindent Throughout the paper, by an algebra we mean a basic connected artin algebra over a commutative
artin ring $K$. For an algebra $A$, we denote by $\mod A$ the category of finitely generated
right $A$-modules  and by $\rad_A$ the radical of $\mod A$, generated by all non-isomorphisms
between indecomposable modules in $\mod A$. Then the infinite radical $\rad_{A}^{\infty}$ of
$\mod A$ is the intersection of all powers $\rad^i_A$, $i \geqslant 1$, of $\rad_A$. By a result
of Auslander \cite{A}, $\rad_A^{\infty}=0$ if and only if $A$ is of finite representation type,
that is, there are in $\mod A$ only finitely many indecomposable modules up to isomorphism.
Moreover, we denote by $\Gamma_A$ the Auslander-Reiten quiver of $A$ and by $\tau_A$ and $\tau^-_A$
the Auslander-Reiten translations $\DTr$ and $\TrD$  in $\mod A$, respectively. We do not
distinguish between an indecomposable module $X$ in $\mod A$ and the corresponding vertex $\{X\}$
in $\Gamma_A$. Moreover, by a component of $\Gamma_A$ we mean a connected component of the quiver
$\Gamma_A$.

The Auslander-Reiten quiver $\Gamma_A$ of an algebra $A$ is an important combinatorial and homological
invariant of its module category $\mod A$. Frequently, algebras can be recovered from the graph
structure, for example the shape of components, of their Auslander-Reiten quivers. Further, very often
the behaviour of components of the Auslander-Reiten quiver $\Gamma_A$ of an algebra $A$ in the
category $\mod A$ leads to essential homological information on $A$, allowing to determinate $A$ and
$\mod A$ completely. Recall that a component $\mathcal{C}$ of an Auslander-Reiten quiver $\Gamma_A$
is called \emph{regular} if $\mathcal{C}$ contains neither a projective module nor an injective
module, and \emph{semi-regular} if $\mathcal{C}$ does not contain both a projective module and an
injective module. By general theory (see \cite{L1}, \cite{L2}, \cite{Z}), every regular component
$\mathcal{C}$ of $\Gamma_A$ is either of the form $\mathbb{Z}\Delta$, for a locally finite acyclic
valued quiver $\Delta$, or a \emph{stable tube} $\mathbb{Z}\mathbb{A}_{\infty}/ (\tau^r)$, for some
$r \geqslant 1$. More generally (see \cite{L2}), every semi-regular component $\mathcal{C}$ of
$\Gamma_A$ is either a full translation subquiver of such a translation quiver $\mathbb{Z}\Delta$
(acyclic case) or is a \emph{ray tube}, obtained from a stable tube by a finite number (possibly
empty) of ray insertions, or a \emph{coray tube}, obtained from a stable tube by a finite number
(possibly empty) of coray insertions.

A prominent role in the representation theory of algebras is played by the generalized standard
Auslander-Reiten components. Following \cite{S2} a component $\mathcal{C}$ of an Auslander-Reiten
quiver $\Gamma_A$ is called \emph{generalized standard} if $\rad_{A}^{\infty}(X,Y)=0$ for all modules
$X$ and $Y$ in $\mathcal{C}$. It has been proved in \cite{S2} that every generalized standard
component  $\mathcal{C}$ of $\Gamma_A$ is \emph{almost periodic}, that is, all but finitely many
$\tau_A$-orbits in $\mathcal{C}$ are periodic. Distinguished classes of generalized standard
components are formed by the Auslander-Reiten quivers of all algebras of finite representation type,
the connecting components of tilted algebras \cite{HRi} (respectively, double tilted algebras \cite{RS2},
generalized double tilted algebras \cite{RS3}), the separating families of tubes of quasi-tilted algebras
of canonical type \cite{LP2}, \cite{LS}, or more generally, separating families of almost cyclic coherent
components of generalized multicoil algebras \cite{MS}. The acyclic generalized standard components
have been described completely in \cite{S1}. In particular, the regular acyclic generalized standard
components are exactly the connecting components of tilted algebras given by regular tilting modules
\cite{S2}. On the other hand, the description of the support algebras of arbitrary generalized standard
components is an exciting but difficult problem (see \cite{S7}, \cite{S9}). Namely, it is shown in
\cite{S7} and \cite{S9} that every algebra $\Lambda$ over a field $K$ is a factor algebra of an algebra
$A$ (even symmetric algebra) with $\Gamma_A$ having a sincere generalized standard stable tube. Another
interesting open problem is to find handy criteria for an almost periodic Auslander-Reiten component
to be generalized standard. For stable tubes (respectively, almost acyclic components) such handy
criteria have been established in \cite{R2}, \cite{S2}, \cite{S7} (respectively, \cite{L4}, \cite{RS2},
\cite{RS3}, \cite{S1}).

In this paper we are concerned with the structure of components of the Auslander-Reiten quiver $\Gamma_A$
of an algebra $A$ having an ordered interaction with other components of $\Gamma_A$. Following \cite{RS1}, by an
\emph{external short path} of  a component $\mathcal{C}$ of $\Gamma_A$ we mean a sequence
$X \rightarrow Y \rightarrow Z$ of non-zero non-isomorphisms between indecomposable modules in $\mod A$
with $X$ and $Z$ in $\mathcal{C}$ but $Y$ not in $\mathcal{C}$.
We mention that every component $\mathcal{C}$ of $\Gamma_A$ without external short paths has the important
property: the additive category  $\add(\mathcal{C})$ of $\mathcal{C}$ is closed under extensions
in $\mod A$. For a component  $\mathcal{C}$
of $\Gamma_A$, we denote by $\ann_A(\mathcal{C})$ the \emph{annihilator} of $\mathcal{C}$ in $A$,
that is, the intersection of the annihilators $\ann_A(X)=\{a \in A|Xa=0\}$ of all modules $X$ in
$\mathcal{C}$. A component $\mathcal{C}$ of $\Gamma_A$ with $\ann_A(\mathcal{C})=0$ is called
{\em faithful}. We note that $\ann_A(\mathcal{C})$ is an ideal of $A$, $\mathcal{C}$ is a faithful
component of $\Gamma_{A/ \ann_A(\mathcal{C})}$, and every simple right $(A/\ann_A(\mathcal{C}))$-module
occurs as a composition factor of a module in $\mathcal{C}$.

We are now in a position to formulate the main results of the paper.

\begin{theorem} \label{thm1}
Let $A$ be an algebra, $\mathcal{C}$ a component  of $\Gamma_A$ without projective modules and
external short paths, and $B=A/\ann_A(\mathcal{C})$. Then one of the following statements holds:
\begin{itemize}
\item[(i)] $B$ is a tilted algebra of the form $\End_H(T)$, where $H$ is a hereditary algebra
and $T$ is a tilting $H$-module without non-zero preinjective direct summands, and $\mathcal{C}$
is the connecting component $\mathcal{C}_T$ of $\Gamma_B$ determined by $T$.
\item[(ii)] $B$ is the opposite algebra of an almost concealed canonical algebra and
$\mathcal{C}$ is a faithful coray tube of a separating family of coray tubes of $\Gamma_B$.
\end{itemize}
\end{theorem}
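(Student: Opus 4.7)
The plan is to work throughout with $B = A/\ann_A(\mathcal{C})$, under which reduction $\mathcal{C}$ becomes a faithful component of $\Gamma_B$ with the same quiver structure and the same properties (no external short paths, no projective $A$-modules). By the general structure theorem for semi-regular components (see \cite{L2}), $\mathcal{C}$ must have one of three shapes, but the ray-tube case is excluded at once: ray insertions always introduce indecomposable projective modules into the component. Hence $\mathcal{C}$ is either (a) a full translation subquiver of some $\mathbb{Z}\Delta$ (acyclic case), or (b) a coray tube.

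The key intermediate step is to deduce from the no external short paths hypothesis that $\mathcal{C}$ is generalized standard in $\Gamma_B$, that is, $\rad_B^{\infty}(X,Y) = 0$ for all $X, Y \in \mathcal{C}$. Using the extension-closedness of $\add(\mathcal{C})$ in $\mod B$ recorded in the introduction, together with a standard factorisation of an infinite-radical morphism through an indecomposable middle term, any would-be counterexample produces an external short path and contradicts the hypothesis. This places the component within the scope of the known classifications of generalized standard semi-regular components.

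In case (a), the description of acyclic generalized standard components from \cite{S1} combined with classical tilting theory identifies $B$ as a tilted algebra $\End_H(T)$ for some hereditary $H$, with $\mathcal{C}$ the connecting component $\mathcal{C}_T$. The hypotheses that $\mathcal{C}$ contains no projective $A$-modules and admits no external short paths translate, via the tilting functors $\Hom_H(T,-)$ and $\Ext^1_H(T,-)$, into the prohibition of preinjective direct summands of $T$ (their presence would force either a separate postprojective component of $\Gamma_B$ providing external short paths, or an indecomposable projective $A$-module inside $\mathcal{C}$). In case (b), $\mathcal{C}$ is a faithful generalized standard coray tube, and the theory of almost concealed canonical algebras together with their separating families of tubes (\cite{LP2}, \cite{LS}, \cite{MS}) shows that $\mathcal{C}$ must be a member of a separating family of coray tubes of $\Gamma_B$, with $B$ the opposite of an almost concealed canonical algebra; the opposite-algebra twist arises because almost concealed canonical algebras in the usual orientation support separating families of \emph{ray} tubes, which dualize to coray tubes over the opposite algebra.

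The principal technical obstacle lies in case (b): one must bootstrap from the single faithful coray tube $\mathcal{C}$ to the entire separating family of $\Gamma_B$ and then identify $B^{\op}$ as almost concealed canonical. Achieving this requires using the no external short paths hypothesis to propagate local information at $\mathcal{C}$ (its coray insertions, the injective modules it contains, and a canonical slice at the mouth) to global control of $\mod B$, and it is here that the most substantial work is expected.
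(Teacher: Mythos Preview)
Your proposal has a genuine gap at the ``key intermediate step''. You claim that, using extension-closedness of $\add(\mathcal{C})$ and a ``standard factorisation of an infinite-radical morphism through an indecomposable middle term'', one can deduce that $\mathcal{C}$ is generalized standard whenever it has no external short paths. But there is no such standard factorisation result: a nonzero morphism in $\rad_B^{\infty}(X,Y)$ with $X,Y\in\mathcal{C}$ need not factor through any indecomposable module outside $\mathcal{C}$. Indeed, Example~\ref{ex2.6} of the paper exhibits a component without external short paths that is \emph{not} generalized standard; your sketched argument makes no visible use of the semi-regularity hypothesis and would, as written, apply equally to that example. In the paper the implication ``semi-regular $+$ no external short paths $\Rightarrow$ generalized standard'' is Corollary~\ref{cor4}, which is explicitly derived \emph{from} Theorems~\ref{thm1} and~\ref{thm2}, not used as an input to them. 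So your plan is circular unless you supply a genuinely new direct proof of this implication, and you have not indicated one.

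The paper's route is entirely different and bypasses generalized standardness altogether. The main engine is Proposition~3.2: one first proves that $B=A/\ann_A(\mathcal{C})$ is a \emph{quasi-tilted} algebra. This is done by choosing a faithful module $M\in\add(\mathcal{C})$, so that there are a monomorphism $A\hookrightarrow M^r$ and an epimorphism $M^s\twoheadrightarrow D(A)$, and then running homological dimension arguments. The key tool is Lemma~3.1 on short chains: whenever a candidate module would have $\pd\geqslant 2$ or $\id\geqslant 2$, the resulting nonvanishing $\Hom_A(D(A),\tau_A(-))$ or $\Hom_A(\tau_A^{-}(-),A)$ combines with the faithfulness of $M$ to produce a short chain through $M$, which Lemma~3.1 converts into an external short path of $\mathcal{C}$, a contradiction. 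This yields $\gldim B\leqslant 2$ and $\pd_B N\leqslant 1$ or $\id_B N\leqslant 1$ for every indecomposable $N$, hence $B$ is quasi-tilted. Only then does the paper invoke the structure of $\Gamma_B$ for quasi-tilted $B$ (Theorem~\ref{thm6}, proved as Propositions~2.2 and~2.4) together with Liu's theorem on semi-stable components to place $\mathcal{C}$ as either a connecting component (acyclic case) or a faithful coray tube in a separating family (cyclic case). Your outline omits this quasi-tilted reduction, which is the actual heart of the proof.
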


\begin{theorem} \label{thm2}
Let $A$ be an algebra, $\mathcal{C}$ a component of $\Gamma_A$ without injective modules and external
short paths, and $B=A/ \ann_A(\mathcal{C})$. Then one of the following statements holds:
\begin{itemize}
\item[(i)] $B$ is a tilted algebra of the form $\End_H(T)$, where $H$ is a hereditary algebra
and $T$ is a tilting $H$-module without non-zero preprojective direct summands, and $\mathcal{C}$
is the connecting component $\mathcal{C}_T$ of $\Gamma_B$ determined by $T$.
\item[(ii)] $B$ is an almost concealed canonical algebra and
$\mathcal{C}$ is a faithful ray tube of a separating family of ray tubes of $\Gamma_B$.
\end{itemize}
\end{theorem}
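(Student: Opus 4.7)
The statement is the injective-free counterpart of Theorem \ref{thm1}, so the natural strategy is a duality argument reducing Theorem \ref{thm2} to Theorem \ref{thm1} applied to the opposite algebra. Concretely, let $D$ denote the standard duality between $\mod A$ and $\mod A^{\op}$; it is $K$-linear, exact, preserves indecomposability, interchanges injective and projective modules, and induces an isomorphism of translation quivers $\Gamma_A \to \Gamma_{A^{\op}}$ that reverses the arrows and exchanges $\tau_A$ with $\tau^-_{A^{\op}}$.

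The plan is as follows. First, I would verify that $\mathcal{C}^{\ast} := D(\mathcal{C})$ is a component of $\Gamma_{A^{\op}}$ containing no projective $A^{\op}$-module, because $\mathcal{C}$ contains no injective $A$-module by hypothesis. Next, I would check that a sequence $X \to Y \to Z$ of non-zero non-isomorphisms in $\mod A$ with $X,Z \in \mathcal{C}$ and $Y \notin \mathcal{C}$ gives, after applying $D$, a sequence $DZ \to DY \to DX$ of non-zero non-isomorphisms in $\mod A^{\op}$ with $DX,DZ \in \mathcal{C}^{\ast}$ and $DY \notin \mathcal{C}^{\ast}$; thus $\mathcal{C}^{\ast}$ has no external short paths in $\Gamma_{A^{\op}}$. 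A direct computation also yields $\ann_{A^{\op}}(\mathcal{C}^{\ast}) = \ann_A(\mathcal{C})$ (viewing both as ideals via the canonical identification of $A^{\op}$ and $A$ as $K$-modules), so $B^{\op} = A^{\op}/\ann_{A^{\op}}(\mathcal{C}^{\ast})$.

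Now I would apply Theorem \ref{thm1} to the algebra $A^{\op}$ and its component $\mathcal{C}^{\ast}$. In case (i) of Theorem \ref{thm1}, $B^{\op} = \End_{H'}(T')$ for a hereditary algebra $H'$ and a tilting $H'$-module $T'$ without non-zero preinjective summands, with $\mathcal{C}^{\ast} = \mathcal{C}_{T'}$. Setting $H = (H')^{\op}$ and $T = DT'$, the standard equivalence $\End_{H'}(T')^{\op} \cong \End_H(T)$ exhibits $B$ as tilted from the hereditary algebra $H$ via the tilting $H$-module $T$, and since $D$ interchanges preprojective and preinjective components of regular hereditary module categories, $T$ has no non-zero preprojective direct summands; moreover $\mathcal{C} = D(\mathcal{C}^{\ast})$ is exactly the connecting component $\mathcal{C}_T$ of $\Gamma_B$. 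In case (ii), $B^{\op}$ is the opposite of an almost concealed canonical algebra, so $B$ is itself almost concealed canonical; under $D$, a faithful coray tube in a separating family of coray tubes of $\Gamma_{B^{\op}}$ corresponds to a faithful ray tube in a separating family of ray tubes of $\Gamma_B$, which is precisely statement (ii) of Theorem \ref{thm2}.

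The only genuine subtleties are bookkeeping items: checking that the notion of external short path really is self-dual, identifying $\ann_A(\mathcal{C})$ with $\ann_{A^{\op}}(\mathcal{C}^{\ast})$ so that the ambient algebras in the two theorems match, and verifying that $D$ carries the structural labels (tilted versus almost concealed canonical, preprojective versus preinjective, ray versus coray tube, separating family) in the expected way. These are standard and routine, so no step presents a substantial obstacle; the content of Theorem \ref{thm2} is fully absorbed by Theorem \ref{thm1} via duality.
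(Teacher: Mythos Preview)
Your proposal is correct and coincides with the paper's own argument: the paper simply states that Theorem~\ref{thm2} follows from Theorem~\ref{thm1} applied to the opposite algebra $A^{\op}$, and your write-up is a careful unpacking of exactly this duality reduction. The bookkeeping items you flag (self-duality of external short paths, identification of annihilators, and the interchange preprojective/preinjective and ray/coray under $D$) are indeed routine, and the paper does not elaborate on them either.
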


\begin{corollary} \label{cor3}
Let $A$  be an algebra, $\mathcal{C}$ a regular component of $\Gamma_A$ without
external short paths, and $B=A/\ann_A(\mathcal{C})$. Then one of the following statements holds:
\begin{itemize}
\item[(i)] $B$ is a tilted algebra of the form $\End_H(T)$, where $H$ is a hereditary algebra and
$T$ is a regular tilting
$H$-module, and $\mathcal{C}$ is the connecting component $\mathcal{C}_T$ of $\Gamma_B$ determined
by $T$.
\item[(ii)] $B$ is a concealed canonical algebra and $\mathcal{C}$ is a stable tube
of a separating family of stable tubes of $\Gamma_B$.
\end{itemize}
\end{corollary}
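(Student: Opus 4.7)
The plan is to apply Theorems~\ref{thm1} and~\ref{thm2} simultaneously, which is legitimate since a regular component $\mathcal{C}$ contains neither projective nor injective modules. This splits the argument into four cases according to which alternative each theorem produces; only the diagonal pairs (i)/(i) and (ii)/(ii) will turn out to be consistent.

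Suppose first that both Theorem~\ref{thm1}(i) and Theorem~\ref{thm2}(i) hold. Then $B$ is presented twice as a tilted algebra $\End_H(T)$ with $\mathcal{C}=\mathcal{C}_T$, but with $T$ having no nonzero preinjective summand in the first presentation and no nonzero preprojective summand in the second. A tilted algebra together with a distinguished connecting component determines the pair $(H,T)$ up to isomorphism, so both descriptions refer to the same $T$, which is therefore without preprojective or preinjective summands, i.e., a regular tilting $H$-module. This yields statement (i) of the Corollary.

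I would next rule out the mixed cases. If Theorem~\ref{thm1}(i) and Theorem~\ref{thm2}(ii) hold simultaneously, then $\mathcal{C}$ is at once a connecting component of a tilted algebra (hence a full translation subquiver of $\mathbb{Z}\Delta$ for a locally finite acyclic valued quiver $\Delta$, in particular acyclic) and a ray tube; but a ray tube contains the $\tau_B$-periodic mouth of the stable tube from which it is built and therefore carries oriented cycles, which is incompatible with acyclicity. The symmetric combination Theorem~\ref{thm1}(ii) with Theorem~\ref{thm2}(i) is excluded by the same argument.

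Finally, suppose both (ii)/(ii) hold. Then $\mathcal{C}$ is simultaneously a faithful coray tube and a faithful ray tube inside a separating family of tubes of $\Gamma_B$. A translation quiver admitting both structures has rays and corays running to infinity from every mouth vertex, which forces all ray and coray insertions to be trivial; so $\mathcal{C}$ is a stable tube, and the whole separating family of $\Gamma_B$ then consists of stable tubes. At the level of algebras, Theorem~\ref{thm1}(ii) gives that $B^{\op}$ is almost concealed canonical while Theorem~\ref{thm2}(ii) gives the same for $B$. I expect the main obstacle to be the identification of this two-sided condition with concealed canonicity; the argument would invoke the realisation of (almost) concealed canonical algebras as endomorphism algebras of appropriate tilting modules over a canonical algebra, together with the self-dual nature of the canonical algebras, to conclude that $B$ must itself be concealed canonical. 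This yields statement (ii) of the Corollary.
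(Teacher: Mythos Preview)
Your approach works in outline, but it is considerably more circuitous than what the paper does, and two of your steps are left shaky.

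The paper simply applies Theorem~\ref{thm1} alone (since $\mathcal{C}$ has no projectives) and then reads off the regular refinement from facts already recorded in its proof. In the acyclic case, it invokes Ringel's characterisation (recalled in Section~2): $\mathcal{C}_T$ contains a projective if and only if $T$ has a preinjective summand, and $\mathcal{C}_T$ contains an injective if and only if $T$ has a preprojective summand. Hence $\mathcal{C}_T$ regular forces $T$ regular, with no need to match up two presentations of $B$. In the tube case, the end of Section~3 observes directly that a coray tube without injective modules is a stable tube and that this happens precisely when $B$ is concealed canonical. So only one theorem is needed, and there is no case analysis to reconcile.

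By contrast, your (i)/(i) argument rests on the assertion that a tilted algebra together with a distinguished connecting component determines $(H,T)$ up to isomorphism; this is not a standard citation and is in any case unnecessary, since Ringel's criterion applied to the single presentation from Theorem~\ref{thm1}(i) already gives that $T$ is regular once $\mathcal{C}=\mathcal{C}_T$ has neither projectives nor injectives. Your (ii)/(ii) argument is left as a sketch: you do not actually justify why the entire separating family, and not just $\mathcal{C}$, consists of stable tubes, nor why the two-sided ``almost concealed canonical'' condition collapses to concealed canonical. Both points can be argued (the separating family of a quasi-tilted algebra of canonical type is unique, so the ray-tube and coray-tube descriptions coincide and every tube is stable; then $T\in\add(\mathcal{P}^{\Lambda})$), but you should either fill them in or, more simply, adopt the paper's one-theorem route.
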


We would like to mention that, by a result of Ringel \cite{R3a}, a hereditary algebra $H$ admits
a regular tilting module $T$ if and only if $H$ is neither of Dynkin type nor Euclidean type and has at least
three pairwise non-isomorphic simple modules (see also \cite{Ba1}, \cite{Ba2}, \cite[Section XVIII.5]{SS2}
for constructions of regular tilting modules over wild hereditary algebras). We refer to \cite{KS}
for constructions of regular connecting components of tilted algebras and stable tubes of concealed canonical
algebras having all indecomposable modules with every simple module occuring arbitrary many times as a composition factor.
Moreover, we refer to \cite{S5} for results on the composition factors of modules in generalized
standard stable tubes.

As an immediate consequence of Theorems \ref{thm1} and \ref{thm2} we obtain the following fact.

\begin{corollary} \label{cor4}
Let $A$ be an algebra and $\mathcal{C}$ be a semi-regular component of $\Gamma_A$ without external short paths.
Then $\mathcal{C}$ is a generalized standard component of $\Gamma_A$.
\end{corollary}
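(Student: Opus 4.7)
The plan is to apply Theorems~\ref{thm1} and~\ref{thm2} to reduce the situation to the structure of $B = A/\ann_A(\mathcal{C})$, invoke the known generalized standardness of the resulting distinguished components of $\Gamma_B$, and then transfer the property from $\mod B$ back to $\mod A$ by exploiting the absence of external short paths.

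Since $\mathcal{C}$ is semi-regular, either it contains no projective module, in which case Theorem~\ref{thm1} applies, or it contains no injective module, in which case Theorem~\ref{thm2} applies. In either case, $\mathcal{C}$ is identified inside $\Gamma_B$ either with the connecting component $\mathcal{C}_T$ of a tilted algebra $B = \End_H(T)$, or with a faithful (co)ray tube belonging to a separating family of (co)ray tubes in $\Gamma_B$ for an (opposite of an) almost concealed canonical algebra. For these two classes, it is classical that the distinguished components are generalized standard: connecting components of tilted algebras are generalized standard by \cite{S2} (see also \cite{HRi}), and separating families of (co)ray tubes over (opposite) almost concealed canonical algebras are generalized standard by the theory of quasi-tilted algebras of canonical type \cite{LP2}, \cite{LS}. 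Hence $\rad_B^\infty(X, Y) = 0$ for all $X, Y$ in $\mathcal{C}$.

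The remaining step is to upgrade this vanishing from $\mod B$ to $\mod A$, and this is where the no external short path hypothesis is used decisively. I would prove by induction on $n \geqslant 1$ that every $f \in \rad_A^n(X, Y)$ with $X, Y \in \mathcal{C}$ is a sum of $n$-fold compositions of radical morphisms whose intermediate indecomposable modules all lie in $\mathcal{C}$. Indeed, for a non-zero summand $g_i h_i$ with $h_i \in \rad_A^{n-1}(X, Z_i)$ and $g_i \in \rad_A(Z_i, Y)$ and $Z_i$ indecomposable, the pair yields a sequence of non-zero non-isomorphisms $X \to Z_i \to Y$ between indecomposable modules with both endpoints in $\mathcal{C}$; by hypothesis no such external short path exists, forcing $Z_i \in \mathcal{C}$, and the induction hypothesis then applies to $h_i$. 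Consequently $\rad_A^n(X, Y) \subseteq \rad_B^n(X, Y)$ for every $n$, and intersecting yields $\rad_A^\infty(X, Y) = 0$. The main obstacle I foresee is not the transfer argument, which is forced cleanly by the short path condition, but rather marshalling the correct versions of the generalized standardness results for the two families appearing in the dichotomy of Theorems~\ref{thm1} and~\ref{thm2} and ensuring that the projective-free and injective-free cases are handled symmetrically via the appropriate duality.
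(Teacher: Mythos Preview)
Your proposal is correct and follows the same route the paper takes: the paper states Corollary~\ref{cor4} as ``an immediate consequence of Theorems~\ref{thm1} and~\ref{thm2}'' with no further argument, and your write-up simply unpacks that immediacy. The dichotomy from Theorems~\ref{thm1} and~\ref{thm2} places $\mathcal{C}$ among components already known to be generalized standard in $\Gamma_B$ (this is also contained in Theorem~\ref{thm6}), and your inductive transfer $\rad_A^n(X,Y)=\rad_B^n(X,Y)$ via the no-external-short-path hypothesis is exactly the step the authors leave implicit; it is valid because any nonzero radical factorization $X\to Z\to Y$ with $X,Y\in\mathcal{C}$ forces $Z\in\mathcal{C}$, and $\Hom_A=\Hom_B$ on $B$-modules.
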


We exhibit in Section 2 (Example \ref{ex2.6}) an Auslander-Reiten component without external short paths which
is not generalized standard.
It would be interesting to know when an Auslander-Reiten component without
external short paths is generalized standard. This question can be interpreted in the following way. Following \cite{S3},
a \emph{component quiver} $\Sigma_A$ of an algebra $A$ has the components of $\Gamma_A$ as the vertices and two
components $\mathcal{C}$ and $\mathcal{D}$ of $\Gamma_A$ are linked in $\Sigma_A$ by an arrow $\mathcal{C}
\rightarrow \mathcal{D}$ if $\rad_A^{\infty}(X,Y)\neq 0$ for some modules $X$ in $\mathcal{C}$ and $Y$ in
$\mathcal{D}$. In particular, a component $\mathcal{C}$ of $\Gamma_A$ is generalized standard if and only if
$\Sigma_A$ has no loop at $\mathcal{C}$. Therefore, we ask when, for a component $\mathcal{C}$ of $\Gamma_A$,
the absence of short cycles
$\xymatrix{ \mathcal{C} \ar@<0.5ex>[r] & \mathcal{D} \ar@<0.5ex>[l]}$ in $\Sigma_A$ with $\mathcal{C}\neq
\mathcal{D}$ forces the absence of loop $\xymatrix {\mathcal{C} \ar@(ru,rd)} \qquad$ at $\mathcal{C}$ in $\Sigma_A$.

It has been proved in \cite[Theorem 3.6]{S2} that the Auslander-Reiten quiver $\Gamma_A$ of an algebra $A$
has at most finitely many acyclic regular generalized standard components. Hence, we obtain the following
immediate consequence of the above results.

\begin{corollary} \label{cor5}
Let $A$ be an algebra. Then all but finitely many components of $\Gamma_A$ without external short paths
are stable tubes.
\end{corollary}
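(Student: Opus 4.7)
The plan is to combine Corollary \ref{cor3} and Corollary \ref{cor4} with the finiteness statement of \cite[Theorem 3.6]{S2} recalled immediately before the corollary. First, since $A$ has only finitely many pairwise non-isomorphic indecomposable projective modules and finitely many pairwise non-isomorphic indecomposable injective modules, only finitely many components of $\Gamma_A$ can contain a projective or an injective module. Hence, up to discarding a finite set of components, any component $\mathcal{C}$ of $\Gamma_A$ without external short paths may be assumed to be regular.

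Next, fix such a regular component $\mathcal{C}$ and put $B = A/\ann_A(\mathcal{C})$. By Corollary \ref{cor3}, either $\mathcal{C}$ is a stable tube of a separating family of stable tubes of $\Gamma_B$, or $\mathcal{C}$ is the connecting component $\mathcal{C}_T$ of a tilted algebra $B = \End_H(T)$ given by a regular tilting $H$-module $T$. In the first case, $\mathcal{C}$ is a stable tube as a component of $\Gamma_A$ as well, since $\mod B$ is a full subcategory of $\mod A$ and the translation-quiver shape of $\mathcal{C}$ is the same in $\Gamma_A$ and $\Gamma_B$; so such components contribute directly to the conclusion. In the second case, $\mathcal{C}$ is of the form $\mathbb{Z}\Delta$ for an acyclic valued quiver $\Delta$, hence $\mathcal{C}$ is acyclic; moreover, as a regular component is semi-regular, Corollary \ref{cor4} yields that $\mathcal{C}$ is a generalized standard component of $\Gamma_A$.

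Applying \cite[Theorem 3.6]{S2} to $\Gamma_A$ itself, the number of acyclic regular generalized standard components of $\Gamma_A$ is finite. Therefore, only finitely many of the components without external short paths fall in case (i) of Corollary \ref{cor3}, and combining with the previous reductions we conclude that all but finitely many components of $\Gamma_A$ without external short paths are stable tubes. The only non-routine check is that the classification of Corollary \ref{cor3}, formally stated in terms of $\Gamma_B$, transfers verbatim to components of $\Gamma_A$; this is immediate from the embedding $\mod B \hookrightarrow \mod A$ and the identification of $\mathcal{C}$ as a component of both quivers.
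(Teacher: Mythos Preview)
Your proof is correct and follows essentially the same route as the paper: reduce to regular components (finitely many projectives and injectives), use Corollary~\ref{cor4} to get generalized standardness, and then invoke \cite[Theorem~3.6]{S2} to bound the number of acyclic regular generalized standard components, leaving only stable tubes. The paper simply records this as an ``immediate consequence'' of the preceding results together with \cite[Theorem~3.6]{S2}; your write-up just unpacks the same reasoning, with the minor addition of explicitly noting that the shape of $\mathcal{C}$ is the same in $\Gamma_B$ and $\Gamma_A$.
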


A crucial role in our proofs of Theorems \ref{thm1} and \ref{thm2} is played by the following theorem describing the components
without external short paths in the Auslander-Reiten quivers of quasi-tilted algebras. Recall that the quasi-tilted
algebras are those of the form $\End_{\mathcal{H}}(T)$ for tilting objects $T$ in hereditary abelian $\Ext$-finite
categories $\mathcal{H}$, or equivalently, the algebras $\Lambda$ of global dimension at most two and with
every indecomposable module in $\mod \Lambda$ of the projective dimension or the injective dimension at most  one
\cite{HRS}.

\begin{theorem} \label{thm6}
Let $A$ be a quasi-tilted algebra and $\mathcal{C}$  a component of $\Gamma_A$. The following statements are equivalent:
\begin{itemize}
\item[(i)] $\mathcal{C}$ has no external short path.
\item[(ii)] $\mathcal{C}$ is almost periodic.
\item[(iii)] $\mathcal{C}$ is generalized standard.
\item[(iv)] $\mathcal{C}$ is either a preprojective component, a preinjective component, a ray tube, a coray tube,
or a connecting component (in case $A$ is a tilted algebra).
\end{itemize}
\end{theorem}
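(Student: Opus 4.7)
My plan is to establish the cyclic chain $(\mathrm{iv})\Rightarrow(\mathrm{iii})\Rightarrow(\mathrm{ii})\Rightarrow(\mathrm{iv})$, closing with the equivalence $(\mathrm{i})\Leftrightarrow(\mathrm{iv})$. The strategy rests on two pillars: the Happel--Reiten--Smal{\o} dichotomy asserting that every quasi-tilted algebra is either tilted or quasi-tilted of canonical type, together with the decomposition $\mod A=\mathcal{L}_A\vee\mathcal{R}_A$ (modules whose predecessors have projective dimension $\le 1$, respectively whose successors have injective dimension $\le 1$); and the Liu--Skowro\'nski shape theorem for semi-regular components, refined in the quasi-tilted setting.

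For $(\mathrm{iv})\Rightarrow(\mathrm{iii})$ I would invoke the relevant existing theorems case by case. A preprojective or preinjective component is directed, so by \cite{S2} it is generalized standard. The connecting component of a tilted algebra is generalized standard by Ringel's classical result, while a ray or coray tube belonging to the separating tubular family of an almost concealed canonical algebra (or its opposite) is generalized standard by \cite{LP2} and \cite{LS}. The implication $(\mathrm{iii})\Rightarrow(\mathrm{ii})$ is immediate from the general theorem of \cite{S2} that generalized standard components are almost periodic.

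For $(\mathrm{ii})\Rightarrow(\mathrm{iv})$ I would argue as follows. Using the HRS decomposition, an infinite almost periodic component $\mathcal{C}$ in $\mod A$ must interact with $\mathcal{L}_A$ and $\mathcal{R}_A$ in a constrained way: all but finitely many modules in $\mathcal{C}$ lie on periodic $\tau_A$-orbits, and since $\pd X\le 1$ or $\id X\le 1$ for every indecomposable $X$, the periodic part of $\mathcal{C}$ forces $\mathcal{C}$ into the intersection of semi-regular zones. One then appeals to the shape theorem: semi-regular components are either of the form $\mathbb{Z}\Delta$ (which under the almost periodicity constraint over a quasi-tilted algebra collapses to a preprojective, preinjective, or, in the tilted case, the connecting component) or ray/coray tubes obtained from stable tubes by finitely many (co)ray insertions; these insertions are exactly the non-periodic orbits permitted by almost periodicity. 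This step relies essentially on known structure results for Auslander-Reiten quivers of quasi-tilted algebras and is routine once the HRS framework is in place.

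For $(\mathrm{iv})\Rightarrow(\mathrm{i})$ I would check each listed type directly: preprojective and preinjective components are end-terminal in the partial order of components and admit no nonzero map into them from later components (resp. out of them into earlier ones), excluding external short paths; connecting components of tilted algebras and separating (co)ray tubes of almost concealed canonical algebras satisfy a separation property between the preprojective and preinjective parts, which rules out any short path $X\to Y\to Z$ with $Y$ outside. Finally, for $(\mathrm{i})\Rightarrow(\mathrm{iv})$---which is the heart of the matter---I would combine the extension-closedness of $\add(\mathcal{C})$ (noted in the introduction and attributed to \cite{RS1}) with the quasi-tilted structure: the absence of external short paths together with the existence of $\mathcal{L}_A$ and $\mathcal{R}_A$ forces $\mathcal{C}$ to consist of indecomposables of bounded homological dimension sitting in a ``separating'' position, from which a case analysis (according to whether $\mathcal{C}$ meets $\mathcal{L}_A\setminus\mathcal{R}_A$, $\mathcal{R}_A\setminus\mathcal{L}_A$, or $\mathcal{L}_A\cap\mathcal{R}_A$) identifies $\mathcal{C}$ with one of the four types. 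The main obstacle is precisely this last implication: one must rule out exotic components allowed by the shape theorem but not appearing on the list, and this requires the full strength of the structural results for quasi-tilted algebras together with the extension-closure property of $\add(\mathcal{C})$ to exclude pathological morphisms factoring through other components.
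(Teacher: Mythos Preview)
Your overall architecture---splitting into the tilted and canonical cases via the Happel--Reiten dichotomy, and cycling through $(\mathrm{iv})\Rightarrow(\mathrm{iii})\Rightarrow(\mathrm{ii})$---matches the paper. The implications $(\mathrm{iv})\Rightarrow(\mathrm{iii})$, $(\mathrm{iii})\Rightarrow(\mathrm{ii})$, and $(\mathrm{iv})\Rightarrow(\mathrm{i})$ are handled as in the paper, by invoking the known component classification and separation properties.

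The genuine gap is in your treatment of $(\mathrm{i})\Rightarrow(\mathrm{iv})$, which you correctly identify as the heart of the matter but for which your proposed mechanism is not the right one. The paper does \emph{not} argue via extension-closedness of $\add(\mathcal{C})$ or a homological case analysis along $\mathcal{L}_A$, $\mathcal{R}_A$. Instead it proceeds contrapositively: using the full component classification for quasi-tilted algebras (Kerner, Liu, Strauss in the tilted case; Lenzing--Skowro\'nski and Meltzer in the canonical case), the components not on list (iv) are exactly those obtained from $\mathbb{Z}\mathbb{A}_{\infty}$ by finitely many ray or coray insertions, arising only when a wild hereditary or wild canonical piece is present. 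For such a component $\mathcal{C}$ one picks a second regular component $\mathcal{D}\neq\mathcal{C}$, transports left cones of both into the regular part of a wild hereditary category (via Kerner's or Meltzer's cone equivalences), and then applies Baer's lemma (or its canonical analogue due to Lenzing--de la Pe\~na): for indecomposable regular $X,Y$ over a wild hereditary algebra there exists $m$ with $\Hom(X,\tau^rY)\neq 0$ for all $r\geq m$. This manufactures a path $X\to\tau^rY\to\tau^sX$ with the middle term in $\mathcal{D}$, i.e.\ an external short path of $\mathcal{C}$.

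Your $\mathcal{L}_A/\mathcal{R}_A$ argument does not supply this construction; nothing about homological dimensions or extension-closure prevents a $\mathbb{Z}\mathbb{A}_{\infty}$-type component from sitting entirely inside $\mathcal{L}_A$ (or $\mathcal{R}_A$) while still admitting external short paths. The missing ingredient is precisely the asymptotic $\Hom$-nonvanishing lemma for wild hereditary (and wild canonical) categories, together with the cone-transport results that let you apply it inside $\mod A$.
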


We would like to mention that, in general, there are many
generalized standard (even faithful) components having external
short paths, for example the faithful generalized standard stable
tubes over generalized canonical (but not canonical) algebras
introduced in \cite{S7} (see also \cite{S9}). On the other hand,
Theorem \ref{thm6} leads to a similar characterization of components
without external short  paths in the Auslander-Reiten quivers of
generalized double tilted algebras investigated in \cite{RS2},
\cite{RS3}. In particular, it is the case for all algebras $A$
with all but finitely many indecomposable modules in $\mod A$ of
the projective dimension (respectively, injective dimension) at
most one \cite{S4}. We refer also to \cite{S8} for a
characterization of the class of algebras consisting of the
quasi-tilted algebras and generalized double tilted algebras.
Finally, we also mention that Theorem \ref{thm6} leads to a similar
characterization of components without external short paths in the
Auslander-Reiten quivers of algebras having separating families of
almost cyclic coherent components, where the connecting components
have to be replaced by the generalized multicoils (see \cite{MS}
for details). On the other hand, it is not clear if every
Auslander-Reiten component without external short paths
is almost periodic.\\

From Drozd's Tame and Wild Theorem \cite{Dr} the class of finite dimensional algebras over an algebraically closed
field $K$ may be divided into two disjoint classes. One class consists of the tame algebras for which
the indecomposable modules occur, in each dimension $d$, in a finite number of discrete and a finite number of
one-parameter families. The second class is formed by the wild algebras whose representation theory `comprises'
the representation theories of all finite dimensional algebras over $K$. Among the tame algebras we may  distinguish
the  classes of algebras of polynomial growth \cite{S0} (respectively, domestic \cite{CB2}, \cite{S0}) for which there
exists a positive integer $m$ such that the indecomposable modules occur, in each dimension $d$, in a finite
number of discrete and at most $d^m$ (respectively, at most $m$) one-parameter families. Moreover, it has been proved
by Crawley-Boevey \cite{CB1} that, for a tame algebra $A$, all but finitely many indecomposable $A$-modules
of any fixed dimension $d$ lie in stable tubes of rank one. We refer to \cite[Chapter XIX]{SS2} for precise definitions
and properties of tame and wild algebras.

The following result is an immediate consequence of Corollary \ref{cor3}, Theorem \ref{thm6}, the above theorem of Crawley-Boevey,
and the fact that the tilted algebras given by regular tilting modules and the concealed canonical algebras
of wild type are wild.
\begin{corollary} \label{cor7}
Let $A$ be a finite dimensional tame algebra over  an algebraically  closed field $K$ and $\mathcal{C}$
a regular  component of $\Gamma_A$ without external short paths. Then $\mathcal{C}$ is a stable tube
and $A/ \ann_A(\mathcal{C})$ is a tame concealed algebra or a tubular algebra.
\end{corollary}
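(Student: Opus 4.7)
The plan is to apply Corollary~\ref{cor3} to the regular component $\mathcal{C}$ and then exclude the wild possibilities using the tameness of $A$. Since $\mathcal{C}$ has no external short paths, Corollary~\ref{cor3} yields two cases for $B := A/\ann_A(\mathcal{C})$: either (i) $B = \End_H(T)$ for a hereditary algebra $H$ and a regular tilting $H$-module $T$, with $\mathcal{C}$ the connecting component $\mathcal{C}_T$; or (ii) $B$ is a concealed canonical algebra and $\mathcal{C}$ is a stable tube of the separating family of stable tubes of $\Gamma_B$.

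The first step is the observation that $B$ is tame. Indeed $B$ is a factor algebra of the tame algebra $A$, and the canonical fully faithful exact embedding $\mod B \hookrightarrow \mod A$ sends one-parameter families of $B$-modules to one-parameter families of $A$-modules, so tameness descends along the quotient map.

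Next I would rule out case (i). If $B = \End_H(T)$ with $T$ a regular tilting module, then by Ringel's theorem recorded after Corollary~\ref{cor3}, the hereditary algebra $H$ has at least three pairwise non-isomorphic simple modules and is of neither Dynkin nor Euclidean type; hence $H$ is wild hereditary. Tilted algebras arising from wild hereditary algebras are themselves wild (this is one of the facts cited in the hint, via the standard transfer of wildness along tilting functors between derived categories). Since $B$ is tame, this is a contradiction, so case (i) cannot occur. In particular, we are in case (ii), and $\mathcal{C}$ is automatically a stable tube, which already gives the first assertion of the corollary.

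It remains to identify $B$ within the class of concealed canonical algebras. Concealed canonical algebras split according to the representation type of the underlying canonical algebra: the Euclidean (domestic) canonical type yields exactly the tame concealed algebras, the tubular canonical type yields the tubular algebras, and the wild canonical type yields wild algebras. The last option is excluded by the tameness of $B$; equivalently, one may invoke Crawley-Boevey's theorem, by which all but finitely many indecomposable $B$-modules of any fixed dimension must lie in stable tubes of rank one, which fails for wild concealed canonical algebras. Hence $B$ is either tame concealed or tubular, as claimed. The main obstacle is not logical but bibliographic: one must correctly invoke the preservation of wildness under tilting of wild hereditary algebras together with the classification of concealed canonical algebras by the type of their underlying canonical algebra, both of which are standard but nontrivial ingredients of the theory.
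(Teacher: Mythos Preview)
Your argument is correct and matches the paper's own reasoning: the paper offers no detailed proof of this corollary, merely stating that it is an immediate consequence of Corollary~\ref{cor3}, Theorem~\ref{thm6}, Crawley-Boevey's theorem, and the fact that tilted algebras given by regular tilting modules and concealed canonical algebras of wild type are wild. You have faithfully expanded this sketch, using Corollary~\ref{cor3} to reduce to two cases, the tameness of the factor algebra $B$ to exclude the wild possibilities, and the trichotomy Euclidean/tubular/wild for concealed canonical algebras to finish; your invocation of Crawley-Boevey's theorem as an alternative route to exclude the wild canonical case is also in line with the paper's citation.
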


Moreover, we have also the following consequence of Corollary \ref{cor3}, Theorem \ref{thm6}, \cite{CB1}, \cite{R2},
\cite[Lemma 3.6]{S0} and \cite[Theorem A and Corollary B]{S6}.

\begin{corollary} \label{cor8}
Let $A$ be a finite dimensional tame algebra over  an algebraically closed field $K$  such that no
component of $\Gamma_A$ has an external short path. Then the following facts hold.
\begin{itemize}
\item[(i)] $A$ is of polynomial growth.
\item[(ii)] $A$ is a domestic algebra if and only if all but finitely  many components of $\Gamma_A$
are stable tubes of rank one.
\end{itemize}
\end{corollary}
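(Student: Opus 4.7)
Setting up the proof, my first move is to reduce to the classification provided by Corollary~\ref{cor3}. Since every component of $\Gamma_A$ is free of external short paths, Corollary~\ref{cor5} gives that all but finitely many components of $\Gamma_A$ are stable tubes. For each such stable tube $\mathcal{C}$, I apply Corollary~\ref{cor3} to $B_\mathcal{C} := A/\ann_A(\mathcal{C})$. Option~(i) there gives $B_\mathcal{C}$ tilted by a regular tilting module over a hereditary $H$, and Ringel's classification recalled in the introduction forces $H$ to be wild; option~(ii) allows $B_\mathcal{C}$ to be concealed canonical of any representation type. Since $\mod B_\mathcal{C}$ is a full subcategory of $\mod A$, tameness of $A$ descends (via Drozd's dichotomy~\cite{Dr}) to $B_\mathcal{C}$, killing both wild possibilities. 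Hence each $B_\mathcal{C}$ is a concealed canonical algebra of Euclidean type---that is, a tame concealed algebra---or of tubular type, and by Theorem~\ref{thm6} the tube $\mathcal{C}$ belongs to the separating $\mathbb{P}^1$-family of $\Gamma_{B_\mathcal{C}}$.

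For~(i), both tame concealed and tubular algebras are of polynomial growth, and the $B_\mathcal{C}$'s account for cofinitely many components of $\Gamma_A$ organised as separating families of tubes. The polynomial-growth criterion \cite[Theorem~A and Corollary~B]{S6}, together with \cite[Lemma~3.6]{S0}, is precisely the tool that converts this organised picture---combined with the finite exceptional component-set---into polynomial growth of $A$ itself, by bounding dimension by dimension the number of one-parameter families of indecomposable $A$-modules arising from the collected $B_\mathcal{C}$'s.

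For~(ii), the analysis pivots on how many rank-$>1$ tubes can appear. A tubular algebra contributes infinitely many rank-$>1$ tubes (one from each of the infinitely many $\mathbb{P}^1$-families indexed by rational slopes), whereas a tame concealed algebra contributes only finitely many exceptional rank-$>1$ tubes. For $(\Leftarrow)$, assume all but finitely many components are stable tubes of rank one; then no $B_\mathcal{C}$ can be tubular, so every $B_\mathcal{C}$ is tame concealed, and \cite[Lemma~3.6]{S0} together with \cite{R2} converts the rank-one-tube structure into a uniform bound on the number of one-parameter families of $A$-modules, i.e., domesticity of $A$. For $(\Rightarrow)$, assume $A$ is domestic; tubular $B_\mathcal{C}$'s are excluded (tubular algebras are not domestic), so every $B_\mathcal{C}$ is tame concealed; Crawley-Boevey's theorem~\cite{CB1} together with \cite{R2} and \cite[Lemma~3.6]{S0} bounds the number of distinct tame concealed quotients $B_\mathcal{C}$ by the domestic parameter of $A$, and each such quotient contributes only finitely many rank-$>1$ tubes, so in total only finitely many rank-$>1$ stable tubes can occur in $\Gamma_A$, yielding the claim.

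The main obstacle I expect is this last bookkeeping step in~(ii): the tube-by-tube local classification is delivered cleanly by Corollary~\ref{cor3} and Theorem~\ref{thm6}, but converting this into a global bound on the collection $\{B_\mathcal{C}\}$ and on their combined contribution of one-parameter families of $A$-modules requires the combinatorial machinery of \cite[Lemma~3.6]{S0}, \cite{R2} and \cite{CB1}. It is precisely in this passage from local to global that the argument is most delicate.
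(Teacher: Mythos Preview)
The paper does not give a proof of this corollary at all: it simply states that Corollary~\ref{cor8} is ``the following consequence of Corollary~\ref{cor3}, Theorem~\ref{thm6}, \cite{CB1}, \cite{R2}, \cite[Lemma~3.6]{S0} and \cite[Theorem~A and Corollary~B]{S6}''. Your proposal invokes exactly the same ingredients in the same roles---Corollary~\ref{cor3} (really its specialisation, Corollary~\ref{cor7}) for the local classification of the factor algebras $B_{\mathcal{C}}$, and the four external references for the passage from local structure to global growth type---so your approach is aligned with the paper's intended argument.

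One point worth tightening in your sketch of~(ii): when you argue that infinitely many rank-$>1$ tubes in $\Gamma_A$ force some $B_{\mathcal{C}}$ to be tubular (or conversely), you are implicitly using that distinct rank-$>1$ tubes $\mathcal{C}$ of $\Gamma_A$ with the \emph{same} tame concealed quotient $B_{\mathcal{C}}$ must already be distinct rank-$>1$ tubes in the separating family of $\Gamma_{B_{\mathcal{C}}}$, of which there are at most three. This is correct but uses that $\mathcal{C}$ is a component of $\Gamma_{B_{\mathcal{C}}}$ (not merely a subquiver), which follows because $\mathcal{C}$ consists of $B_{\mathcal{C}}$-modules and is closed under Auslander--Reiten translation. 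Similarly, bounding the number of distinct tame concealed quotients $B_{\mathcal{C}}$ by the domesticity parameter of $A$ relies on the fact that each such quotient contributes a genuine one-parameter family of indecomposable $A$-modules (the mouths of its homogeneous tubes), and that families coming from different quotients are distinct; this is where \cite[Lemma~3.6]{S0} and \cite{R2} enter. You flag this bookkeeping as the delicate step, which is accurate.
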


It would be important to know if a finite dimensional algebra $A$ over  an algebraically closed field
$K$ with every component in $\Gamma_A$ without external short paths is actually a tame algebra. On the
other hand, if for such an algebra $A$ every component in $\Gamma_A$ is generalized standard, then
$\rad_A^{\infty}(X,X)=0$ for any indecomposable module $X$ in $\mod A$, and consequently $A$ is a tame
algebra (see \cite[Proposition 3.3]{S?}). Hence we obtain, by Corollary \ref{cor8}, the following fact.

\begin{corollary}\label{cor9}
Let $A$ be a finite dimensional algebra over  an algebraically closed field $K$  such that
the component  quiver  $\Sigma_A$ of $A$ has no short cycles
$\xymatrix{\circ  \ar@<0.5ex>[r] & \ar@<0.5ex>[l] \circ}$ and no loops  $\xymatrix {\circ \ar@(ru,rd)}\qquad$. Then
$A$ is a tame algebra of polynomial growth.
\end{corollary}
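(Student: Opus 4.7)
The plan is to combine the two hypotheses on $\Sigma_A$ in order to invoke Corollary \ref{cor8}. First, I would observe that since $\Sigma_A$ has no loop at any vertex, every component $\mathcal{C}$ of $\Gamma_A$ is generalized standard, by the characterization recalled in Section \ref{intro}. In particular $\rad_A^{\infty}(X,X)=0$ for every indecomposable module $X$ in $\mod A$, and the cited \cite[Proposition 3.3]{S?} then forces $A$ to be tame.

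Next, I would translate the absence of short cycles $\xymatrix{\mathcal{C} \ar@<0.5ex>[r] & \mathcal{D} \ar@<0.5ex>[l]}$ (with $\mathcal{C}\neq\mathcal{D}$) in $\Sigma_A$ into the absence of external short paths for every component of $\Gamma_A$. The key input is the standard fact that any non-zero non-isomorphism between two indecomposable modules lying in distinct components of $\Gamma_A$ automatically belongs to $\rad_A^{\infty}$. Hence an external short path $X\to Y\to Z$ of a component $\mathcal{C}$ with $Y$ in some component $\mathcal{D}\neq\mathcal{C}$ would simultaneously produce a non-zero element of $\rad_A^{\infty}(X,Y)$ and a non-zero element of $\rad_A^{\infty}(Y,Z)$, giving a pair of opposite arrows between $\mathcal{C}$ and $\mathcal{D}$ in $\Sigma_A$, contradicting the hypothesis.

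Having established that $A$ is tame and that no component of $\Gamma_A$ admits an external short path, Corollary \ref{cor8}(i) applies and yields that $A$ is of polynomial growth, finishing the proof.

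I do not anticipate a significant obstacle: the two combinatorial conditions on $\Sigma_A$ unpack directly in terms of vanishing properties of $\rad_A^{\infty}$, and the substantive work has already been assembled in Corollary \ref{cor8} together with the cited \cite[Proposition 3.3]{S?}. If anything requires care, it is only to notice that the translation from ``no short cycles in $\Sigma_A$'' to ``no external short paths in $\mathcal{C}$'' must be performed uniformly over all components $\mathcal{C}$ of $\Gamma_A$, which is automatic from the argument above.
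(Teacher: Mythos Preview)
Your proposal is correct and follows essentially the same line as the paper: the absence of loops gives generalized standardness and hence $\rad_A^{\infty}(X,X)=0$, which with \cite[Proposition 3.3]{S?} yields tameness, while the absence of short cycles rules out external short paths, so Corollary~\ref{cor8}(i) applies. The paper leaves the translation from ``no short cycles in $\Sigma_A$'' to ``no external short paths'' implicit, but your explicit justification via $\rad_A^{\infty}$ for maps between different components is exactly the intended argument.
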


As an application of Corollary \ref{cor3} and results established in \cite{EKS}, \cite{RSS2} and \cite{SY1}, \cite{SY2}, \cite{SY3},
we obtain a complete description of all self-injective algebras whose Auslander-Reiten quiver admits a regular
acyclic component without external short paths.

\begin{theorem}\label{thm10}
Let $A$ be a self-injective algebra. The following statements are equivalent.
\begin{itemize}
\item[(i)] $\Gamma_A$ admits a regular acyclic component $\mathcal{C}$ without external short paths.
\item[(ii)] $A$ is isomorphic to an orbit algebra $\widehat{B}/ (\varphi\nu^2_{\widehat{B}})$,
where $\widehat{B}$ is the repetitive category of a tilted algebra  $B$ of the form $\End_H(T)$, for some
hereditary algebra $H$ and a regular tilting $H$-module $T$, $\nu_{\widehat{B}}$ is the Nakayama automorphism
of $\widehat{B}$, and $\varphi$ is a positive automorphism of $\widehat{B}$.
\end{itemize}
\end{theorem}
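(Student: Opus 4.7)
The plan is to prove the equivalence by treating both directions separately. For the implication (i) $\Rightarrow$ (ii), I would apply Corollary \ref{cor3} to the regular acyclic component $\mathcal{C}$ without external short paths in $\Gamma_A$. Because $\mathcal{C}$ is acyclic, case (ii) of that corollary (in which $\mathcal{C}$ is a stable tube, hence cyclic) is immediately ruled out; hence $B = A/\ann_A(\mathcal{C})$ is a tilted algebra $\End_H(T)$ with $H$ hereditary and $T$ a regular tilting module, and $\mathcal{C}$ coincides with the connecting component $\mathcal{C}_T$ of $\Gamma_B$. Moreover, by Corollary \ref{cor4}, $\mathcal{C}$ is generalized standard, so the full hypothesis package needed to invoke the classification theorems of \cite{SY1}, \cite{SY2}, \cite{SY3} (built on the techniques of \cite{EKS} and \cite{RSS2}) is available.

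Applying that classification to the self-injective algebra $A$ together with its tilted quotient $B$ and the acyclic generalized standard component $\mathcal{C} = \mathcal{C}_T$, one recovers $A$ as an orbit algebra $\widehat{B}/G$ of the repetitive category $\widehat{B}$ by an infinite cyclic admissible group $G$ of $K$-linear automorphisms of $\widehat{B}$, and one shows further that the generator of $G$ must be of the form $\varphi\nu_{\widehat{B}}^2$ for some positive automorphism $\varphi$. Substituting in that $B$ is tilted by a \emph{regular} tilting $H$-module yields statement (ii).

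For the implication (ii) $\Rightarrow$ (i), I would start from $A = \widehat{B}/(\varphi\nu_{\widehat{B}}^2)$ and produce the required component by a Galois covering argument. Since $T$ is a regular tilting $H$-module, the connecting component $\mathcal{C}_T$ of $\Gamma_B$ is a regular acyclic component of the form $\mathbb{Z}\Delta$, and its translates inside $\Gamma_{\widehat{B}}$ form a family of regular acyclic components of $\Gamma_{\widehat{B}}$. Positivity of $\varphi$ guarantees that $\varphi\nu_{\widehat{B}}^2$ acts freely, with infinite orbits, on that family, so the push-down functor $\mod\widehat{B}\to\mod A$ attached to the Galois covering $\widehat{B}\to A$ identifies each orbit to a single regular acyclic component $\mathcal{C}$ of $\Gamma_A$, and, by exactness of the push-down combined with the separating role of $\mathcal{C}_T$ in $\mod B$, preserves the absence of external short paths.

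The main obstacle is the implication (i) $\Rightarrow$ (ii). Corollary \ref{cor3} makes the reduction to the tilted case essentially automatic, so the real content lies in showing that the self-injective algebra $A$ is reconstructed from its tilted quotient $B$ precisely as an orbit algebra of the repetitive category $\widehat{B}$, and that the defining automorphism decomposes as $\varphi\nu_{\widehat{B}}^2$ with $\varphi$ positive. This reconstruction rests on the detailed analysis of admissible groups of automorphisms of repetitive categories carried out in \cite{SY1}, \cite{SY2}, \cite{SY3}; my role in the proof is essentially to verify that Corollaries \ref{cor3} and \ref{cor4} deliver exactly the input required by those classification theorems.
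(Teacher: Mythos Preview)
Your outline has the right architecture, but there is a genuine gap in the direction (i) $\Rightarrow$ (ii): the results of \cite{SY1}, \cite{SY2} do \emph{not} exhibit $A$ itself as an orbit algebra of $\widehat{B}$. What they produce is the socle deformation $A[I]$ (with $I=\ann_A(\mathcal{C})$) as an orbit algebra $\widehat{B}/(\psi\nu_{\widehat{B}})$ for some positive $\psi$; getting from there to the statement requires three further steps that you have not addressed. First, one must transfer the absence of external short paths from $\mathcal{C}$ in $\mod A$ to the corresponding component $\mathcal{C}'$ in $\mod A[I]$, via the isomorphism $A/\soc(A)\cong A[I]/\soc(A[I])$; this is not automatic, since a middle term of a short path could be projective-injective and thus not an $(A/\soc(A))$-module, so one has to argue separately using the Nakayama permutation. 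Second, one needs the ``if'' direction of a characterisation (Proposition~\ref{prop4.1}(iii) in the paper) showing that $\mathcal{C}'$ having no external short paths forces $\psi=\varphi\nu_{\widehat{B}}$ with $\varphi$ positive; this is proved in the paper, not cited from \cite{SY1}--\cite{SY3}. Third, one still has to show $A\cong A[I]$, which uses \cite[Proposition~3.2]{SY3} once one has checked that the Nakayama permutation of $A$ has no fixed points among the idempotents in $e$ --- and that check relies on the already established form $A[I]\cong\widehat{B}/(\varphi\nu_{\widehat{B}}^2)$.

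For (ii) $\Rightarrow$ (i) your sketch is closer, but ``exactness of the push-down combined with the separating role of $\mathcal{C}_T$ in $\mod B$'' is not enough. The paper uses the full decomposition $\Gamma_{\widehat{B}}=\bigvee_q(\mathcal{C}_q\vee\mathcal{R}_q)$ from \cite{EKS}, density of the push-down (via local support-finiteness of $\widehat{B}$), and the specific vanishing $\Hom_{\widehat{B}}(\mathcal{C}_0,\mathcal{C}_p\vee\mathcal{R}_p)=0$ for $p\geqslant 2$. The key point is that $\varphi\nu_{\widehat{B}}^2$ sends $\mathcal{C}_0$ to some $\mathcal{C}_p$ with $p\geqslant 4$, which is exactly the separation needed; with only a single $\nu_{\widehat{B}}$ external short paths \emph{do} arise (Proposition~\ref{prop4.1}(iii) is an equivalence), so the argument must detect why the exponent~$2$ matters.
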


We refer to \cite{EKS} for the representation theory of orbit algebras $\widehat{B}/ G$ of the repetitive
categories $\widehat{B}$ of tilted algebras $B$ of wild type and infinite cyclic automorphism groups $G$
of $\widehat{B}$. We also note that the problem of describing the self-injective algebras whose
Auslander-Reiten quiver admits a stable tube without external short paths is more difficult, because the
stable tubes occur in families of quasi-tubes. We refer to \cite{KSY} for a wide class of self-injective algebras
having infinitely many stable tubes without external short paths.

The paper is organized as follows. In Section 2 we prove Theorem \ref{thm6}
and recall the related background on tilted algebras and
quasi-tilted algebras of canonical type. In Section 3 we provide
the proofs of Theorems \ref{thm1} and \ref{thm2}, showing that every algebra whose
Auslander-Reiten quiver admits a faithful semi-regular component
without external short paths is a quasi-tilted algebra, and then
applying Theorem \ref{thm6}. The final Section 4 is devoted to the proof of
Theorem \ref{thm10} and the related background on the orbit algebras of
repetitive categories of algebras.

For background on the representation theory we refer to the books \cite{ASS}, \cite{ARS}, \cite{H}, \cite{R2},
\cite{SS1} and \cite{SS2}.

\section{Proof of Theorem \ref{thm6}}

In the proof of Theorem \ref{thm6} we need translation subquivers of the Auslander-Reiten quivers of special type.
Let $A$ be an algebra, $\mathcal{C}$ a component of $\Gamma_A$ and $M$ an indecomposable module in
$\mathcal{C}$. Then the {\em left cone} $(\rightarrow M)$ of $M$ is the full translation subquiver
of $\mathcal{C}$ formed by all predecessors of $M$ in $\mathcal{C}$ and the {\em right cone}
$(M \rightarrow)$ of $M$ is the full translation subquiver  of $\mathcal{C}$ formed by all successors
of $M$ in $\mathcal{C}$.

It has been proved in \cite{HRe} that the class of quasi-tilted algebras consists of the tilted algebras
\cite{HRi} (endomorphism algebras of tilting modules over hereditary algebras) and the quasi-tilted algebras
of canonical type \cite{LS} (endomorphism algebras of tilting objects in hereditary abelian categories
whose bounded derived category is equivalent to the bounded derived category of a canonical algebra
in the sense of Ringel \cite{R2}, \cite{R4}). Accordingly, we will divide the proof of Theorem \ref{thm6} into two cases:
the tilted case and the canonical case.

Let $H$ be a hereditary algebra, $Q_H$ the valued quiver of $H$, and $T$ a multiplicity-free tilting module in $\mod H$,
that is, $\Ext^1_H(T,T)=0$ and $T$ is a direct sum of $n$ pairwise non-isomorphic indecomposable $H$-modules
with $n$ the rank of the Grothendieck group $K_0(H)$ of $H$. Consider the associated tilted algebra $B=\End_H(T)$
of type $Q_H$. Then the tilting module $T$ determines the torsion pair $(\mathcal{F}(T), \mathcal{T}(T))$ in
$\mod H$, with the torsion-free part $\mathcal{F}(T)=\{X \in \mod H | \Hom_H(T,X)=0\}$ and the torsion part
$\mathcal{T}(T)=\{X \in \mod H | \Ext^1_H(T,X)=0\}$, and the splitting torsion pair $(\mathcal{Y}(T), \mathcal{X}(T))$
in $\mod B$, with the torsion-free part $\mathcal{Y}(T)=\{Y \in \mod B| \Tor^B_1(Y,T)=0\}$ and the torsion part
$\mathcal{X}(T)=\{Y \in \mod B| Y \otimes_B T=0\}$. Moreover, by the Brenner-Butler theorem, the functor
$\Hom_H(T,-): \mod H \rightarrow \mod B$ induces an equivalence of $\mathcal{T}(T)$ with $\mathcal{Y}(T)$,
and the functor $\Ext^1_H(T,-): \mod H \rightarrow \mod B$ induces an equivalence of $\mathcal{F}(T)$
with $\mathcal{X}(T)$ (see \cite{ASS}, \cite{HRi}). Further, the images $\Hom_H(T,I)$ of the indecomposable
injective modules $I$ in $\mod H$ via the functor $\Hom_H(T,-)$ belong to one component $\mathcal{C}_T$ of
$\Gamma_B$, called the {\em connecting component} of $\Gamma_B$ determined by $T$, and form a faithful
section $\Delta_T \cong Q^{\op}_H$ of $\mathcal{C}_T$. The section $\Delta_T$ of $\mathcal{C}_T$ has the
distinguished property: it connects the torsion-free part $\mathcal{Y}(T)$ with the torsion part $\mathcal{X}(T)$,
because every indecomposable predecessor of a module $\Hom_H(T,I)$ from $\Delta_T$ in $\mod B$ lies in
$\mathcal{Y}(T)$ and every indecomposable successor of a module $\tau^-_B \Hom_H(T,I)$ in $\mod B$ lies in
$\mathcal{X}(T)$. We have also the following properties of the connecting component $\mathcal{C}_T$
established in \cite{R3}:
\begin{itemize}
\item[$\bullet$] $\mathcal{C}_T$ contains a projective module if and only if $T$ admits a preinjective
indecomposable direct summand;
\item[$\bullet$] $\mathcal{C}_T$ contains an injective  module if and only if $T$ admits a preprojective
indecomposable direct summand;
\item[$\bullet$] $\mathcal{C}_T$ is regular if and only if $T$ is regular.
\end{itemize}
We also mention that the Auslander-Reiten quiver $\Gamma_H$ of $H$ has the decomposition
\[\Gamma_H = \mathcal{P}(H) \vee \mathcal{R}(H) \vee \mathcal{Q}(H),\]
where $\mathcal{P}(H)$ is the preprojective component containing all indecomposable projective $H$-modules,
$\mathcal{Q}(H)$ is the preinjective component containing all indecomposable injective $H$-modules,
and $\mathcal{R}(H)$ is the family of all regular components. Moreover, we have
\begin{itemize}
\item[$\bullet$] If $Q_H$ is a Dynkin quiver, then $\mathcal{R}(H)$ is empty and $\mathcal{P}(H)=
\mathcal{Q}(H)$.
\item[$\bullet$] If $Q_H$ is a Euclidean quiver, then $\mathcal{P}(H)\cong (-\mathbb{N})Q^{\op}_H$,
 $\mathcal{Q}(H)\cong \mathbb{N}Q^{\op}_H$ and $\mathcal{R}(H)$ is an infinite family of pairwise
 orthogonal generalized standard stable tubes \cite{R2}, \cite{SS1}.
\item[$\bullet$] If $Q_H$ is a wild quiver, then $\mathcal{P}(H) \cong (-\mathbb{N})Q^{\op}_H$,
$\mathcal{Q}(H)\cong \mathbb{N}Q^{\op}_H$ and $\mathcal{R}(H)$ is an infinite family of regular
components of type $\mathbb{ZA}_{\infty}$ \cite{ABPRS}, \cite{R1}, \cite{SS2}.
\end{itemize}

Let $H$ be a hereditary algebra not of Dynkin type and $T$ a multiplicity-free tilting $H$-module from
the additive category $\add(\mathcal{P}(H))$ of the preprojective component $\mathcal{P}(H)$ of
$\Gamma_H$. Then $B=\End_H(T)$ is called a {\em concealed algebra} of type $Q_H$. A concealed algebra
$B=\End_H(T)$ is called a {\em tame concealed } algebra if $Q_H$ is a Euclidean quiver, and a {\em wild concealed
algebra} if $Q_H$ is a wild quiver.\\

The following fact proved by Baer \cite{Ba1} (see also \cite[Theorem XVIII.2.6]{SS2}) will be important
for our considerations.
\begin{lemma}
Let $H$ be a wild hereditary algebra, and $X$, $Y$ be two indecomposable modules in $\mathcal{R}(H)$.
Then there is a positive integer $m$ such that $\Hom_H(X, \tau_H^rY)\neq 0$ for all integers $r \geqslant m$.
\end{lemma}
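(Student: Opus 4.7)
The plan is to reduce non-vanishing of $\Hom_H(X, \tau_H^r Y)$ to a growth estimate coming from the Coxeter transformation. First, since $H$ is hereditary, the Auslander--Reiten formula yields
\[
\dim_K \Hom_H(X, \tau_H^r Y) - \dim_K \Ext_H^1(X, \tau_H^r Y) = \langle \underline{\dim}\, X,\, \underline{\dim}\, \tau_H^r Y\rangle_H,
\]
where $\langle -, -\rangle_H$ is the Euler bilinear form on the Grothendieck group $K_0(H) \cong \mathbb{Z}^n$. Since $Y$ is regular, so is $\tau_H^r Y$ for every $r \in \mathbb{Z}$, and hence $\underline{\dim}\, \tau_H^r Y = \Phi_H^r(\underline{\dim}\, Y)$, where $\Phi_H$ is the Coxeter transformation. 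Consequently
\[
\dim_K \Hom_H(X, \tau_H^r Y) \ \geq \ \langle \underline{\dim}\, X,\, \Phi_H^r(\underline{\dim}\, Y)\rangle_H,
\]
and it is enough to show that the right-hand side tends to $+\infty$ as $r \to \infty$.

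Next, I would invoke the spectral description of $\Phi_H$ for a wild hereditary algebra: its characteristic polynomial has a unique simple real root $\rho_H > 1$ of maximal modulus, and the corresponding eigenvector $y^+\in \mathbb{R}^n$ can be chosen componentwise strictly positive (a Perron--Frobenius type fact which is precisely where wildness of $H$ enters). Decomposing $\underline{\dim}\, Y = c\, y^+ + w$ with $w$ lying in the $\Phi_H$-invariant complement, the iterates satisfy $\rho_H^{-r}\Phi_H^r(\underline{\dim}\, Y) \longrightarrow c\, y^+$, and an auxiliary point to verify is that $c > 0$ whenever $Y$ is regular, so that $\underline{\dim}\, Y$ has a strictly positive component along the dominant eigendirection.

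Combining these two ingredients yields the asymptotic
\[
\langle \underline{\dim}\, X,\, \Phi_H^r(\underline{\dim}\, Y)\rangle_H \ \sim \ c\, \rho_H^r\, \langle \underline{\dim}\, X,\, y^+\rangle_H,
\]
so everything comes down to showing that $\langle \underline{\dim}\, X,\, y^+\rangle_H > 0$ for the regular indecomposable $X$. This positivity is the heart of the argument and the main technical obstacle. It can be obtained by applying the same Perron--Frobenius analysis to the transpose $\Phi_H^{T}$: its dominant positive left eigenvector pairs strictly positively with the dimension vector of any regular module, forcing $\langle \underline{\dim}\, X, y^+\rangle_H > 0$. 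Once this is in place, $\dim_K \Hom_H(X, \tau_H^r Y)$ grows at least like $c'\rho_H^r$ with $c' > 0$, and in particular it is nonzero for all $r \geq m$ with $m$ depending on $X$ and $Y$. The delicate, non-formal part of the argument is precisely the strict positivity of the Euler pairing with $y^+$ on regular dimension vectors; the rest is a formal consequence of the Euler formula and of asymptotic linear algebra.
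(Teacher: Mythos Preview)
The paper does not give its own proof of this lemma; it simply records it as a result of Baer \cite{Ba1} (with a parallel reference to \cite[Theorem XVIII.2.6]{SS2}). Your sketch via the Euler form and the Perron--Frobenius analysis of the Coxeter transformation is precisely Baer's ``linear methods'' argument that the paper is invoking, so your approach is correct and coincides with the cited source.
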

We will prove now Theorem \ref{thm6} in the tilted case.
\begin{proposition}
Let $H$ be a hereditary algebra, $T$ a multiplicity-free tilting $H$-module, $B=\End_H(T)$ the associated
tilted algebra, and $\mathcal{C}$ a component of $\Gamma_B$. The following statements are equivalent:
\begin{itemize}
\item[(i)] $\mathcal{C}$ has no external short path.
\item[(ii)] $\mathcal{C}$ is almost periodic.
\item[(iii)] $\mathcal{C}$ is generalized standard.
\item[(iv)]  $\mathcal{C}$ is either a preprojective component, a preinjective component, a ray tube,
a coray tube, or the connecting component $\mathcal{C}_T$.
\end{itemize}
\end{proposition}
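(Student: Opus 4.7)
The plan is to run the cycle $(iv) \Rightarrow (iii) \Rightarrow (ii) \Rightarrow (iv)$ together with the auxiliary equivalence $(iv) \Leftrightarrow (i)$, treating the cases Dynkin, Euclidean, and wild according to the representation type of $H$. The organizing principle is the standard classification recalled before the statement: every component of $\Gamma_B$ is either the connecting component $\mathcal{C}_T$, or lies entirely in $\mathcal{Y}(T)$, or lies entirely in $\mathcal{X}(T)$, and in the latter two situations the Brenner-Butler equivalences identify $\mathcal{C}$ with a full translation subquiver of a component of $\Gamma_H$ contained in $\mathcal{T}(T)$ or $\mathcal{F}(T)$, with the part meeting $\Delta_T$ absorbed into $\mathcal{C}_T$. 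The Dynkin case is trivial (every component coincides with $\mathcal{C}_T$), the Euclidean case is controlled by the stable tubes of $\mathcal{R}(H)$, and the wild case is controlled by the $\mathbb{Z}\mathbb{A}_\infty$-shaped regular components of $\mathcal{R}(H)$.

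For $(iv) \Rightarrow (iii)$ I would handle each listed shape separately. The connecting component $\mathcal{C}_T$ is generalized standard because $\Hom_B(\mathcal{X}(T), \mathcal{Y}(T)) = 0$ and the section $\Delta_T$ sits at the interface of the splitting torsion pair, which forces any composition in $\rad_B^{\infty}$ between two modules of $\mathcal{C}_T$ to vanish. Preprojective and preinjective components of $B$ are acyclic generalized standard by \cite{S1}. Ray and coray tubes arise only when a convex subfactor of $B$ is tame concealed, and they inherit generalized standardness from the pairwise orthogonal generalized standard stable tubes of the Euclidean algebra $H$ via the functors $\Hom_H(T,-)$ and $\Ext_H^1(T,-)$. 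The implication $(iii) \Rightarrow (ii)$ is then the general fact recalled from \cite{S2}.

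For $(ii) \Rightarrow (iv)$, suppose $\mathcal{C}$ is almost periodic and distinct from $\mathcal{C}_T$. Transporting $\mathcal{C}$ back through the Brenner-Butler functors (after removing the finitely many $\tau_B$-orbits possibly absorbed into $\mathcal{C}_T$) yields an almost periodic subquiver of a component of $\Gamma_H$. In the Euclidean case this source is one of $\mathcal{P}(H)$, $\mathcal{Q}(H)$, or a stable tube, and the corresponding image in $\Gamma_B$ is a preprojective component, a preinjective component, a ray tube, or a coray tube, according to which summands of $T$ lie in the source component. In the wild case the regular components of $\Gamma_H$ have no $\tau$-periodic modules at all, so the source must be $\mathcal{P}(H)$ or $\mathcal{Q}(H)$, yielding the preprojective or preinjective option.

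The equivalences $(iv) \Leftrightarrow (i)$ require the sharpest input. For $(iv) \Rightarrow (i)$ the vanishing $\Hom_B(\mathcal{X}(T), \mathcal{Y}(T)) = 0$ combined with the position of $\Delta_T$ rules out external short paths through $\mathcal{C}_T$, and the remaining shapes reduce either to the tame concealed stable tube picture (transported from $\Gamma_H$) or to acyclic generalized standard components of \cite{S1}, both of which exclude such paths. For $(i) \Rightarrow (iv)$ the only delicate case is wild $H$ with $\mathcal{C}$ coming from a regular $\mathbb{Z}\mathbb{A}_\infty$ component $\mathcal{C}'$ of $\Gamma_H$; here I would apply Baer's Lemma recalled above twice to choose $X, Z \in \mathcal{C}'$ and $Y$ in a distinct regular component $\mathcal{D}' \neq \mathcal{C}'$ so that $\Hom_H(X, \tau_H^r Y) \neq 0$ and $\Hom_H(\tau_H^r Y, Z) \neq 0$ for sufficiently large $r$, and then push the resulting configuration through $\Hom_H(T,-)$ or $\Ext_H^1(T,-)$ to produce an external short path in $\Gamma_B$. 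The main obstacle will be precisely this last translation: one has to arrange, by a further $\tau_H$-shift of $Y$ justified by Baer's Lemma, that $\tau_H^r Y$ lies in $\mathcal{T}(T)$ (or in $\mathcal{F}(T)$) and maps to an indecomposable $B$-module genuinely outside $\mathcal{C}$, which follows from the fact that distinct components of $\Gamma_H$ give rise to disjoint components in $\Gamma_B$ away from $\Delta_T$.
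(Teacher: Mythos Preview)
Your overall instinct—classify the components of $\Gamma_B$ and, for the non-listed ones, manufacture an external short path via Baer's Lemma—is exactly the paper's strategy. The gap is in the transport mechanism. You treat the Brenner--Butler equivalences $\Hom_H(T,-)$ and $\Ext^1_H(T,-)$ as if they carried Auslander--Reiten components of $\Gamma_B$ to components of $\Gamma_H$, but they are only equivalences of subcategories and do not globally intertwine $\tau_B$ with $\tau_H$. Concretely, your step $(ii)\Rightarrow(iv)$ in the wild case (``regular components of $\Gamma_H$ have no periodic modules, so the source must be $\mathcal{P}(H)$ or $\mathcal{Q}(H)$'') fails: even when $H$ is wild, $\Gamma_B$ can contain ray and coray tubes, and these do not arise from $\mathcal{P}(H)$ or $\mathcal{Q}(H)$ under any naive transport. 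They come from \emph{Euclidean}-type quotient tilted algebras of $B$, invisible if one only looks at $H$.

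The paper avoids this by invoking the Kerner--Strauss--Liu structural decomposition (\cite{K1}, \cite{K2}, \cite{K3}, \cite{L3}, \cite{St}): $\Gamma_B$ splits as $\bigcup_i \mathcal{Y}\Gamma_{B^{(l)}_i} \cup \mathcal{C}_T \cup \bigcup_j \mathcal{X}\Gamma_{B^{(r)}_j}$, where each $B^{(l)}_i$, $B^{(r)}_j$ is a quotient tilted algebra over a \emph{different} hereditary algebra $H^{(l)}_i$, $H^{(r)}_j$ (of type a subquiver of $\Delta$). This catalogue already gives $(ii)\Leftrightarrow(iii)\Leftrightarrow(iv)$ and $(iv)\Rightarrow(i)$ for free. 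For $(i)\Rightarrow(iv)$ the paper does not push through $\Hom_H(T,-)$ at all: it uses Kerner's left-cone result \cite[Theorem~1]{K2}, which identifies a left cone $(\to X)$ in an acyclic non-periodic component of $\mathcal{Y}\Gamma_{B^{(l)}_i}$ with a left cone in a regular component of $\Gamma_{H^{(l)}_i}$, and then applies Baer's Lemma over $H^{(l)}_i$. For the torsion side the argument is genuinely different again: the left cones in $\mathcal{X}\Gamma_{B^{(r)}_j}$ consist of modules over a wild \emph{concealed} quotient $C^{(r)}_j$ (not over $H$ or $H^{(r)}_j$), and Baer's Lemma is applied over the hereditary algebra underlying $C^{(r)}_j$. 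Your uniform ``push through $\Hom_H(T,-)$ or $\Ext^1_H(T,-)$'' collapses this asymmetry and cannot be made to work without the left-cone machinery.
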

\begin{proof}
We start with the general view on the module category $\mod B$ due to results established in \cite{K1},
\cite{K2}, \cite{K3}, \cite{L3}, \cite{St}.
Let $\Delta= \Delta_T$ be the canonical section of the connecting component $\mathcal{C}_T$ determined
by $T$. Hence, $\Delta= Q^{\op}$ for $Q=Q_H$. Then $\mathcal{C}_T$ admits a finite (possibly empty)
family of pairwise disjoint full translation (valued) subquivers
\[\mathcal{D}^{(l)}_1, ..., \mathcal{D}^{(l)}_m, \mathcal{D}^{(r)}_1, ..., \mathcal{D}^{(r)}_n\]
such that  the following statements hold:
\begin{itemize}
\item[(a)] For each $i \in \{1,...,m\}$, there is an isomorphism of translation quivers $\mathcal{D}^{(l)}_i
\cong \mathbb{N} \Delta^{(l)}_i$, where $\Delta^{(l)}_i$ is a connected full valued subquiver of $\Delta$,
and $\mathcal{D}^{(l)}_i$ is closed under predecessors in $\mathcal{C}_T$.
\item[(b)] For each $j \in \{1,...,n\}$, there is an isomorphism of translation quivers $\mathcal{D}^{(r)}_j
\cong (-\mathbb{N}) \Delta^{(r)}_j$, where $\Delta^{(r)}_j$ is a connected full valued subquiver of $\Delta$,
and $\mathcal{D}^{(r)}_j$ is closed under successors in $\mathcal{C}_T$.
\item[(c)] All but finitely many indecomposable modules of $\mathcal{C}_T$ lie in
\[\mathcal{D}^{(l)}_1 \cup ...\cup \mathcal{D}^{(l)}_m \cup \mathcal{D}^{(r)}_1 \cup ...\cup \mathcal{D}^{(r)}_n.\]
\item[(d)] For each $i \in \{1,...,m\}$, there exists a tilted algebra $B^{(l)}_i=\End_{H^{(l)}_i}(T^{(l)}_i)$,
where $H^{(l)}_i$ is a hereditary algebra of type $(\Delta^{(l)}_i)^{\op}$ and $T^{(l)}_i$ is a multiplicity-free
tilting $H^{(l)}_i$-module without preinjective indecomposable direct summands such that
\begin{itemize}
\item[$\bullet$] $B^{(l)}_i$ is a quotient algebra of $B$, and hence there is a fully faithful embedding
$\mod B^{(l)}_i \hookrightarrow \mod B$,
\item[$\bullet$] $\mathcal{D}^{(l)}_i$ coincides with the torsion-free part $\mathcal{Y}(T^{(l)}_i) \cap
\mathcal{C}_{T^{(l)}_i}$ of the connecting component $\mathcal{C}_{T^{(l)}_i}$ of $\Gamma_{B^{(l)}_i}$
determined by $T^{(l)}_i$.
\end{itemize}
\item[(e)]For each $j \in \{1,...,n\}$, there exists a tilted algebra $B^{(r)}_j=\End_{H^{(r)}_j}(T^{(r)}_j)$,
where $H^{(r)}_j$ is a hereditary algebra of type $(\Delta^{(r)}_j)^{\op}$ and $T^{(r)}_j$ is a multiplicity-free
tilting $H^{(r)}_j$-module without preprojective indecomposable direct summands such that
\begin{itemize}
\item[$\bullet$] $B^{(r)}_j$ is a quotient algebra of $B$, and hence there is a fully faithful embedding
$\mod B^{(r)}_j \hookrightarrow \mod B$,
\item[$\bullet$] $\mathcal{D}^{(r)}_j$ coincides with the torsion part $\mathcal{X}(T^{(r)}_j) \cap
\mathcal{C}_{T^{(r)}_j}$ of the connecting component $\mathcal{C}_{T^{(r)}_j}$ of $\Gamma_{B^{(r)}_j}$
determined by $T^{(r)}_j$.
\end{itemize}
\item[(f)] $\mathcal{Y}(T)=\add (\mathcal{Y}(T^{(l)}_1) \cup ... \cup \mathcal{Y}(T^{(l)}_m)\cup (\mathcal{Y}(T)
\cap \mathcal{C}_T))$.
\item[(g)] $\mathcal{X}(T)=\add ((\mathcal{X}(T) \cap \mathcal{C}_T)\cup \mathcal{X}(T^{(r)}_1) \cup ... \cup
\mathcal{X}(T^{(r)}_n))$.
\item[(h)] The Auslander-Reiten quiver $\Gamma_B$ has the disjoint union form
\[\Gamma_B = (\bigcup_{i=1}^m \mathcal{Y}\Gamma_{B^{(l)}_i}) \cup \mathcal{C}_T \cup  (\bigcup_{j=1}^n
\mathcal{X}\Gamma_{B^{(r)}_j}),\]
where
\begin{itemize}
\item[$\bullet$] for each $i \in \{1,...,m\}$, $\mathcal{Y}\Gamma_{B^{(l)}_i}$ is the union of all components
of $\Gamma_{B^{(l)}_i}$ contained entirely in $\mathcal{Y}(T^{(l)}_i)$,
\item[$\bullet$] for each $j \in \{1,...,n\}$, $\mathcal{X}\Gamma_{B^{(r)}_j}$ is the union of all components
of $\Gamma_{B^{(r)}_j}$ contained entirely in $\mathcal{X}(T^{(r)}_j)$.
\end{itemize}
\end{itemize}
Moreover, we have the following description of the components of $\Gamma_B$ contained in the parts
$\mathcal{Y}\Gamma_{B^{(l)}_i}$ and $\mathcal{X}\Gamma_{B^{(r)}_j}$:
\begin{itemize}
\item[(1)] If $\Delta^{(l)}_i$ is a Euclidean quiver, then $\mathcal{Y}\Gamma_{B^{(l)}_i}$ consists
of a unique preprojective component $\mathcal{P}({B^{(l)}_i})$ of $\Gamma_{{B^{(l)}_i}}$ and an infinite
family $\mathcal{T}^{B^{(l)}_i}$ of pairwise orthogonal generalized standard ray tubes. Further,
$\mathcal{P}({B^{(l)}_i})$ coincides with the preprojective component $\mathcal{P}({C^{(l)}_i})$
of a tame concealed quotient algebra $C^{(l)}_i$ of $B^{(l)}_i$.
\item[(2)] If $\Delta^{(l)}_i$ is a wild quiver, then $\mathcal{Y}\Gamma_{B^{(l)}_i}$ consists of
a unique preprojective component $\mathcal{P}(B^{(l)}_i)$ of $\Gamma_{B^{(l)}_i}$ and an infinite
family of components obtained from the components of the form $\mathbb{ZA}_{\infty}$ by a finite
number (possibly empty) of ray insertions. Further, $\mathcal{P}(B^{(l)}_i)$ coincides with the
preprojective component $\mathcal{P}(C^{(l)}_i)$ of a wild concealed quotient algebra $C^{(l)}_i$
of $B^{(l)}_i$.
\item[(3)] If $\Delta^{(r)}_j$ is a Euclidean quiver, then $\mathcal{X}\Gamma_{B^{(r)}_j}$ consists
of a unique preinjective component $\mathcal{Q}(B^{(r)}_j)$ of $\Gamma_{B^{(r)}_j}$ and an infinite family
of pairwise orthogonal generalized standard coray tubes. Further, $\mathcal{Q}(B^{(r)}_j)$ coincides with
the preinjective component $\mathcal{Q}(C^{(r)}_j)$ of a tame concealed quotient algebra $C^{(r)}_j$ of
$B^{(r)}_j$.
\item[(4)] If $\Delta^{(r)}_j$ is a wild quiver, then $\mathcal{X}\Gamma_{B^{(r)}_j}$ consists of
a unique preinjective component $\mathcal{Q}(B^{(r)}_j)$ of $\Gamma_{B^{(r)}_j}$ and an infinite family of
components obtained from the components of the form $\mathbb{ZA}_{\infty}$ by a finite number (possibly
empty) of coray insertions. Further, $\mathcal{Q}(B^{(r)}_j)$ coincides with the preinjective component
$\mathcal{Q}(C^{(r)}_j)$ of a wild concealed quotient algebra $C^{(r)}_j$ of $B^{(r)}_j$.
\end{itemize}
It follows from the above facts that the preprojective components, preinjective components, ray tubes and
coray tubes of $\Gamma_B$ are generalized standard, without external short paths, and clearly are almost
periodic. On the other hand, the components of $\Gamma_B$ obtained from the components of the form
$\mathbb{ZA}_{\infty}$ by ray insertions or coray insertions are not almost periodic, and hence are not
generalized standard, by the general result \cite[Theorem 2.3]{S2}. Therefore, it remains to show that  all these
components have external short paths. We have two cases to consider.

Assume $\mathcal{C}$ is an acyclic component of $\Gamma_B$ with infinitely many $\tau_B$-orbits contained in the torsion-free
part $\mathcal{Y}(T)$ of $\mod B$. Then it follows from (1) and (2) that there is $i \in \{1,...,m\}$ such that
$\Delta^{(l)}_i$ is a wild quiver and $\mathcal{C}$ is a component of the Auslander-Reiten quiver
$\Gamma_{B^{(l)}_i}$ of the tilted algebra $B^{(l)}_i=\End_{H^{(l)}_i}(T^{(l)}_i)$ with
$H^{(l)}_i$ a wild hereditary algebra of type $(\Delta^{(l)}_i)^{\op}$ and $T^{(l)}_i$ a multiplicity-free
tilting $H^{(l)}_i$-module without preinjective indecomposable direct summands. Since $\mathcal{Y}\Gamma_{B^{(l)}_i}$
contains infinitely many components different from the preprojective component $\mathcal{P}(B^{(l)}_i)$,
we may choose a regular component $\mathcal{D}$ in $\mathcal{Y}\Gamma_{B^{(l)}_i}$ different from $\mathcal{C}$.
Clearly, $\mathcal{D}$ is of the form $\mathbb{ZA}_{\infty}$. Now it follows from \cite[Theorem 1]{K2} that  there exist
regular components $\widetilde{\mathcal{C}}$ and $\widetilde{\mathcal{D}}$ in $\Gamma_{H^{(l)}_i}$ and indecomposable
modules $X \in \mathcal{C}$, $Y \in \mathcal{D}$, $\widetilde{X} \in \widetilde{\mathcal{C}}$ and
$\widetilde{Y} \in \widetilde{\mathcal{D}}$ such that the functor $F^{(l)}_i=\Hom_{H^{(l)}_i}(T^{(l)}_i, -):
\mod H^{(l)}_i \rightarrow \mod B^{(l)}_i$ induces equivalences of the additive categories of the left
cones $\xymatrix{F^{(l)}_i:\add(\rightarrow \widetilde{X})\ar[r]^(.55){\sim}& \add(\rightarrow X)}$ and
$\xymatrix{F^{(l)}_i:\add(\rightarrow \widetilde{Y})\ar[r]^(.55){\sim}& \add(\rightarrow Y).}$
Moreover, we have $F^{(l)}_i(\tau_{H^{(l)}_i}M)=\tau_{B^{(l)}_i}F^{(l)}_i(M)$ and
$F^{(l)}_i(\tau_{H^{(l)}_i}N)=\tau_{B^{(l)}_i}F^{(l)}_i(N)$ for all modules $M$ in $(\rightarrow \widetilde{X})$
and $N$ in $(\rightarrow \widetilde{Y})$. Applying now Lemma 2.1, we obtain that there exist positive
integers $r$ and $s$ such that $\Hom_{H^{(l)}_i}(\widetilde{X}, \tau^r_{H^{(l)}_i}\widetilde{Y})\neq 0$ and
$\Hom_{H^{(l)}_i}(\tau^r_{H^{(l)}_i}\widetilde{Y}, \tau^s_{H^{(l)}_i}\widetilde{X})\neq 0$.
Hence we get $\Hom_{B^{(l)}_i}(X, \tau^r_{B^{(l)}_i}Y)\neq 0$ and $\Hom_{B^{(l)}_i}(\tau^r_{B^{(l)}_i}Y,
\tau^s_{B^{(l)}_i}X) \neq0$, and consequently an external short path $X \rightarrow \tau^r_{B^{(l)}_i}Y
\rightarrow \tau^s_{B^{(l)}_i}X$ of $\mathcal{C}$ in $\mod B^{(l)}_i$, and so in $\mod B$, because $B^{(l)}_i$
is a quotient algebra of $B$ and there is a fully faithful embedding $\mod B^{(l)}_i \hookrightarrow \mod B$.

Assume $\mathcal{C}$ is an acyclic component with infinitely many $\tau_B$-orbits contained in the torsion part
$\mathcal{X}(T)$ of $\mod B$. Then it follows from (3) and (4) that there is $j \in \{1,...,n\}$ such that $\Delta^{(r)}_j$
is a wild quiver and $\mathcal{C}$ is a component of the Auslander-Reiten quiver $\Gamma_{B^{(r)}_j}$ of the
tilted algebra $B^{(r)}_j=\End_{H^{(r)}_j}(T^{(r)}_j)$ with $H^{(r)}_j$ a wild hereditary algebra of type
$(\Delta^{(r)}_j)^{\op}$ and $T^{(r)}_j$ a multiplicity-free tilting $H^{(r)}_j$-module without preprojective
indecomposable direct summands. Since $\mathcal{X}\Gamma_{B^{(r)}_j}$ contains infinitely many components
different from the preinjective component $\mathcal{Q}(B^{(r)}_j)$, we may choose a regular component $\mathcal{D}$
in $\mathcal{X}\Gamma_{B^{(r)}_j}$ different from $\mathcal{C}$. Note that $\mathcal{D}$ is of the form
$\mathbb{ZA}_{\infty}$. We know also that  the preinjective component $\mathcal{Q}(B^{(r)}_j)$ coincides with the unique
preinjective component $\mathcal{Q}(C^{(r)}_j)$ of a wild concealed quotient algebra $\mathcal{C}^{(r)}_j$ of
$B^{(r)}_j$. Then $C^{(r)}_j= \End_{\Lambda^{(r)}_j}(V^{(r)}_j)$, where $\Lambda^{(r)}_j$ is a wild
hereditary algebra and $V^{(r)}_j$ is a multiplicity-free tilting module from $\add(\mathcal{P}(\Lambda^{(r)}_j))$.
In particular, the functor $\Hom_{\Lambda^{(r)}_j}(V^{(r)}_j, -): \mod \Lambda^{(r)}_j \rightarrow \mod C^{(r)}_j$
induces an equivalence $\xymatrix{\add (\mathcal{R}(\Lambda^{(r)}_j)) \ar[r]^(.5){\sim}& \add
(\mathcal{R}(C^{(r)}_j))}$ of the categories of regular modules over $\Lambda^{(r)}_j$ and $C^{(r)}_j$.
Applying Lemma 2.1, we conclude that for any indecomposable modules $M$ and $N$ in $\mathcal{R}(C^{(r)}_j)$
there exists a positive integer $p$ such that $\Hom_{C^{(r)}_j}(M,\tau^s_{C^{(r)}_j}N)\neq 0$ for all
integers $s \geqslant p$. On the other hand, it follows from \cite[Theorem 1]{K2} that there exist indecomposable
modules $X \in \mathcal{C}$ and $Y \in \mathcal{D}$ such that the left cones $(\rightarrow X)$ of
$\mathcal{C}$ and $(\rightarrow Y)$ of $\mathcal{D}$ consist entirely of indecomposable $C^{(r)}_j$-modules
and the restriction of $\tau_{B^{(r)}_j}$ to the left cones $(\rightarrow X)$ and $(\rightarrow Y)$
coincides with $\tau_{C^{(r)}_j}$. Hence, the left cone $(\rightarrow X)$ of $\mathcal{C}$ is the left cone
$(\rightarrow \widetilde{X})$, with $\widetilde{X}=X$,  of a component $\widetilde{\mathcal{C}}$ of type
$\mathbb{ZA}_{\infty}$ of $\Gamma_{C^{(r)}_j}$, and the left cone $(\rightarrow Y)$ of $\mathcal{D}$ is the
left cone $(\rightarrow \widetilde{Y})$, with $\widetilde{Y}=Y$, of a component $\widetilde{\mathcal{D}}$ of type $\mathbb{ZA}_{\infty}$
of $\Gamma_{C^{(r)}_j}$. Observe that $\widetilde{\mathcal{C}} \neq \widetilde{\mathcal{D}}$ since
$\mathcal{C} \neq \mathcal{D}$. Then there exist positive integers $p$ and $q$ such that
$\Hom_{B^{(r)}_j}(X, \tau^p_{B^{(r)}_j}Y)= \Hom_{C^{(r)}_j}(X, \tau_{C^{(r)}_j}^p Y) \neq 0$ and
$\Hom_{B^{(r)}_j}(\tau^p_{B^{(r)}_j}Y, \tau^q_{B^{(r)}_j}X)= \Hom_{C^{(r)}_j}(\tau_{C^{(r)}_j}^p Y,
\tau_{C^{(r)}_j}^q X)\neq 0$. Therefore, we get  an external short path $X \rightarrow \tau^p_{B^{(r)}_j}Y
\rightarrow  \tau_{B^{(r)}_j}^q X$ of $\mathcal{C}$ in $\mod B^{(r)}_j$, and so in $\mod B$, because
$B^{(r)}_j$ is a quotient algebra of $B$ and there is a fully faithful embedding $\mod B^{(r)}_j \hookrightarrow
\mod B$.

We note that although the proofs in the two considered cases are similar, the applied results concerning the structure of left cones
of acyclic components in $\mathcal{Y}(T)$ and $\mathcal{X}(T)$ are different.
\end{proof}
Let $\Lambda$ be a canonical algebra in the sense of Ringel \cite{R4} (see also \cite{R2}). Then the
valued quiver $Q_{\Lambda}$ of $\Lambda$ has a unique sink and a unique source. Denote by $Q^{\ast}_{\Lambda}$ the
valued quiver obtained from $Q_{\Lambda}$  by removing the unique source of $Q_{\Lambda}$ and the arrows
attached to it. Then $\Lambda$ is said to be a {\em canonical algebra of Euclidean type} (respectively,
of {\em tubular type}, of {\em wild type}) if $Q^{\ast}_{\Lambda}$ is a Dynkin quiver (respectively, a Euclidean quiver,
a wild quiver). We refer to \cite[Theorems 3.1 and 3.2]{SY5} for the shapes of the valued quivers of
canonical algebras of Euclidean and tubular type. The general shape of the Auslander-Reiten quiver
$\Gamma_{\Lambda}$ of $\Lambda$, described in \cite[Sections 3 and 4]{R4}, is as follows:
\[\Gamma_{\Lambda}=\mathcal{P}^{\Lambda} \vee \mathcal{T}^{\Lambda} \vee \mathcal{Q}^{\Lambda},\]
where $\mathcal{P}^{\Lambda}$ is a family of components containing a unique preprojective component
$\mathcal{P}(\Lambda)$ and all indecomposable projective $\Lambda$-modules, $\mathcal{Q}^{\Lambda}$
is a family of components containing a unique preinjective component $\mathcal{Q}(\Lambda)$ and all
indecomposable injective $\Lambda$-modules, and $\mathcal{T}^{\Lambda}$ is an infinite family of
pairwise orthogonal generalized standard faithful stable tubes separating $\mathcal{P}^{\Lambda}$
from $\mathcal{Q}^{\Lambda}$, and with all but finitely many stable tubes of rank one. An algebra
$C$ of the form $\End_{\Lambda}(T)$, where $T$ is a multiplicity-free tilting module from the additive
category $\add(\mathcal{P}^{\Lambda})$ of $\mathcal{P}^{\Lambda}$ is said a {\em concealed canonical algebra} of type $\Lambda$.
More generally, an algebra $B$ of the form $\End_{\Lambda}(T)$, where $T$ is a multiplicity-free tilting
module from the additive category  $\add(\mathcal{P}^{\Lambda}\cup \mathcal{T}^{\Lambda})$ of
$\mathcal{P}^{\Lambda}\cup \mathcal{T}^{\Lambda}$ is said to be an {\em almost  concealed canonical algebra}
of type $\Lambda$.

We note the following statements:
\begin{itemize}
\item[$\bullet$] The class of concealed canonical algebras of Euclidean types coincides with the class of
concealed algebras of Euclidean types (tame concealed algebras).
\item[$\bullet$] The class of almost concealed canonical algebras of Euclidean types coincides with the class
of tilted algebras of the form $\End_H(T)$, where $H$ is a hereditary algebra of a Euclidean type and $T$
is a multiplicity-free tilting $H$-module without preinjective indecomposable direct summands.
\item[$\bullet$] The class of the opposite algebras of almost concealed canonical algebras of Euclidean types coincides
with the class of tilted algebras of the form $\End_H(T)$, where $H$ is a hereditary algebra of a Euclidean
type and $T$ is a multiplicity-free tilting $H$-module without preprojective indecomposable direct summands.
\end{itemize}
An almost concealed canonical algebra $B$ of a tubular type is called a {\em tubular algebra}. It is known
that  then the opposite algebra $B^{\op}$ of $B$ is also a tubular algebra. The shape of the Auslander-Reiten
quiver $\Gamma_B$ of a tubular algebra $B$, described by Ringel (see \cite[Chapter 5]{R2}, \cite[Sections 3 and 4]{R4}),
is as follows:
\[\Gamma_B= \mathcal{P}(B) \vee \mathcal{T}^B_0 \vee (\bigvee_{q \in \mathbb{Q}^+} \mathcal{T}^B_q) \vee
\mathcal{T}^B_{\infty} \vee \mathcal{Q}(B),\]
where $\mathbb{Q}^+$ is the set of positive rational numbers, $\mathcal{P}(B)$ is a preprojective
component with a Euclidean section, $\mathcal{Q}(B)$ is a preinjective component with a Euclidean section,
$\mathcal{T}^B_0$ is an inifnite family of pairwise orthogonal generalized standard ray tubes containing
at least one indecomposable projective $B$-module, $\mathcal{T}^B_{\infty}$ is an infinite family of pairwise
orthogonal generalized standard coray tubes containing at least one indecomposable injective $B$-module,
and each $\mathcal{T}^B_q$, for $q \in \mathbb{Q}^+$, is  an infinite family of pairwise orthogonal
generalized standard faithful stable tubes. Moreover, every component of $\Gamma_B$ has no external short path in
$\mod B$.

We will need also an analogue of Lemma 2.1 for the canonical algebras of wild type.

Let $\Lambda$ be a canonical algebra of wild type. Then it follows from \cite{HRS}, \cite{LP1} and \cite{LS}
that there exist hereditary abelian categories $\mathcal{H}(\Lambda)$ and $\mathcal{T}(\Lambda)$ such that
the following statements hold:
\begin{itemize}
\item[$\bullet$] The bounded derived category $D^b(\mod \Lambda)$ of $\Lambda$ has a decomposition
\[D^b(\mod\Lambda) = \bigvee_{m \in \mathbb{Z}}(\mathcal{H}(\Lambda)[m] \vee \mathcal{T}(\Lambda)[m])
= D^b(\mathcal{H}(\Lambda)^{\ast})\]
with $\mathcal{H}(\Lambda)=\mathcal{H}(\Lambda)[0]$, $\mathcal{T}(\Lambda)=\mathcal{T}(\Lambda)[0]$,
and $\mathcal{H}(\Lambda)^{\ast}= \add (\mathcal{H}(\Lambda) \vee \mathcal{T}(\Lambda))$.
\item[$\bullet$] $\mathcal{H}(\Lambda)$ is the additive category of infinitely many components of the form
$\mathbb{ZA}_{\infty}$.
\item[$\bullet$] $\mathcal{T}(\Lambda)$ is the additive category of an infinite family of pairwise orthogonal
generalized standard stable tubes.
\item[$\bullet$] Every concealed canonical algebra $C$ of type $\Lambda$ is of the form
$\End_{\mathcal{H}(\Lambda)}(T)$ for a tilting object  $T$ in $\mathcal{H}(\Lambda)$.
\item[$\bullet$]  Every almost concealed canonical algebra $B$ of type $\Lambda$ is of the form
$\End_{\mathcal{H}(\Lambda)^{\ast}}(T^{\ast})$ for a tilting object $T^{\ast}$ in $\mathcal{H}(\Lambda)^{\ast}$.
\end{itemize}
Then the following lemma is a direct  consequence of \cite[Theorem 2.7]{LP1}.
\begin{lemma}
Let $\Lambda$ be a concealed canonical algebra of wild type, and $X,Y$ be two indecomposable objects in
$\mathcal{H}(\Lambda)$. Then there is a positive integer $m$ such that
$\Hom_{\mathcal{H}(\Lambda)}(X, \tau^r_{\mathcal{H}(\Lambda)}Y) \neq 0$ for all $r \geqslant m$.
\end{lemma}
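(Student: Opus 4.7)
The plan is to adapt Baer's argument (as used for Lemma 2.1) to the category $\mathcal{H}(\Lambda)$, replacing the appeal to estimates on wild hereditary algebras by the spectral information for wild concealed canonical algebras supplied by \cite[Theorem 2.7]{LP1}.

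The first step exploits the fact that $\mathcal{H}(\Lambda)$ is a hereditary abelian $\Ext$-finite category with Auslander-Reiten (Serre) duality, which gives a functorial isomorphism
\[ D\Ext^1_{\mathcal{H}(\Lambda)}(X, \tau^r_{\mathcal{H}(\Lambda)} Y) \cong \Hom_{\mathcal{H}(\Lambda)}(\tau^{r-1}_{\mathcal{H}(\Lambda)} Y, X). \]
All components of $\mathcal{H}(\Lambda)$ are of the form $\mathbb{Z}\mathbb{A}_{\infty}$, and using the hammock structure of such a component (in the form recorded in \cite[Theorem 2.7]{LP1}) one verifies that $\Hom_{\mathcal{H}(\Lambda)}(\tau^{r-1}_{\mathcal{H}(\Lambda)} Y, X) = 0$ for all $r$ past some threshold, so $\Ext^1_{\mathcal{H}(\Lambda)}(X, \tau^r_{\mathcal{H}(\Lambda)} Y)$ vanishes for $r$ large. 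Since $\mathcal{H}(\Lambda)$ is hereditary, $\tau_{\mathcal{H}(\Lambda)}$ induces on $K_0(\mathcal{H}(\Lambda))$ the Coxeter transformation $\Phi$, and the Euler form then collapses to
\[ \dim_{K} \Hom_{\mathcal{H}(\Lambda)}(X, \tau^r_{\mathcal{H}(\Lambda)} Y) = \langle [X], \Phi^r [Y] \rangle \]
for every sufficiently large $r$.

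Thus it suffices to show that $\langle [X], \Phi^r [Y]\rangle$ is eventually positive. This is the asymptotic content of \cite[Theorem 2.7]{LP1}: in the wild case $\Phi$ has a dominant real eigenvalue $\rho > 1$ with a strictly positive Perron eigenvector $v$, the classes of all indecomposable objects of $\mathcal{H}(\Lambda)$ expand in this eigenbasis with a strictly positive $v$-coefficient, and $\langle [X], v \rangle > 0$. Hence $\langle [X], \Phi^r [Y]\rangle \sim c\,\rho^r$ with $c > 0$, so $\Hom_{\mathcal{H}(\Lambda)}(X, \tau^r_{\mathcal{H}(\Lambda)} Y) \neq 0$ for all $r \geqslant m$, as required. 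The main obstacle is to transfer Baer's Perron-Frobenius estimates from the wild hereditary setting to the category $\mathcal{H}(\Lambda)$ attached to a wild concealed canonical algebra; this is exactly the contribution of \cite[Theorem 2.7]{LP1}, which carries out this transfer via the derived equivalence between $\mathcal{H}(\Lambda)^{\ast}$ and the bounded derived category of the canonical algebra of wild type governing $\Lambda$.
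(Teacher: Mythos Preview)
The paper offers no argument here at all: it simply states that the lemma is a direct consequence of \cite[Theorem 2.7]{LP1}. Your proposal instead unpacks a Baer-style mechanism---Serre duality to kill $\Ext^1$, reduction to the Euler form, and then the Perron--Frobenius asymptotics of the Coxeter transformation---while still invoking \cite[Theorem 2.7]{LP1} for the spectral input. So both routes rest on the same reference; yours is an explanatory expansion, the paper's is a bare citation.

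One caution about your expansion: the intermediate vanishing $\Hom_{\mathcal{H}(\Lambda)}(\tau^{r-1}Y,X)=0$ for large $r$ is not really a ``hammock'' statement inside a single $\mathbb{Z}\mathbb{A}_\infty$ component, since morphisms in $\mathcal{H}(\Lambda)$ need not stay in one component, and it is not obviously the content of \cite[Theorem 2.7]{LP1} either (that result concerns asymptotic growth, not vanishing in the reverse direction). In Baer's original hereditary argument this vanishing has its own proof, and the analogous step for $\mathcal{H}(\Lambda)$ would likewise need separate justification. If you want your expanded proof to stand on its own, you should either supply that justification or, as the paper does, simply observe that the full nonvanishing statement is already contained in \cite[Theorem 2.7]{LP1} and cite it outright.
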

We will prove now Theorem \ref{thm6} in the canonical case.
\begin{proposition}
Let $B$ be a quasi-tilted algebra of canonical type and $\mathcal{C}$ a component of $\Gamma_B$.
The following statements are equivalent:
\begin{itemize}
\item[(i)] $\mathcal{C}$ has no external short path.
\item[(ii)] $\mathcal{C}$ is almost periodic.
\item[(iii)] $\mathcal{C}$ is generalized standard.
\item[(iv)] $\mathcal{C}$ is either a preprojective component, a preinjective component, a ray tube, or
a coray tube.
\end{itemize}
\end{proposition}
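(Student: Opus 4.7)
The plan is to parallel the proof of Proposition 2.2, substituting the canonical-type decomposition of $\mod B$ for the tilted decomposition and using Lemma 2.3 in place of Lemma 2.1. Since every quasi-tilted algebra of canonical type is either an almost concealed canonical algebra or the opposite of one, I would first dispose of the cases where the underlying canonical algebra $\Lambda$ is of Euclidean or tubular type. If $\Lambda$ is Euclidean then, by the three structural observations recalled before Lemma 2.3, either $B$ or $B^{\op}$ is a tilted algebra of Euclidean type (without preinjective, respectively preprojective, indecomposable summands of the tilting module), so Proposition 2.2 applies and the connecting component $\mathcal{C}_T$ it produces is in fact a ray (respectively coray) tube, matching the list in (iv). If $\Lambda$ is tubular, the recalled shape of $\Gamma_B$ already exhibits every component in one of the forms listed in (iv), generalized standard and with no external short paths; combined with the implication that generalized standard components are almost periodic \cite[Theorem 2.3]{S2}, all four conditions then coincide component by component.

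This reduces the problem to $B=\End_{\mathcal{H}(\Lambda)^{*}}(T^{*})$ with $\Lambda$ of wild canonical type (the opposite case being symmetric). I would then set up the analogue of items (1)--(4) of Proposition 2.2: $\Gamma_B$ decomposes into the preprojective component $\mathcal{P}(B)$, the preinjective component $\mathcal{Q}(B)$, a family of generalized standard ray tubes obtained from the images of the stable tubes of $\Lambda$ lying in the torsion part for $T^{*}$ (possibly enlarged by ray insertions), a family of generalized standard stable tubes, and an infinite family of $\mathbb{Z}\mathbb{A}_{\infty}$-type components arising from the images of the regular components of $\mathcal{H}(\Lambda)$ (possibly with a finite number of ray insertions). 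For the first four families, generalized standardness, absence of external short paths, and (via \cite[Theorem 2.3]{S2}) almost periodicity are built into the construction, yielding the chain (iv) $\Rightarrow$ (iii) $\Rightarrow$ (ii) $\Rightarrow$ (i) on these components.

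The main obstacle is the converse: showing that every $\mathbb{Z}\mathbb{A}_{\infty}$-type component $\mathcal{C}$ of $\Gamma_B$ admits an external short path, which rules out (i), (ii), and (iii) and forces (iv). Since there are infinitely many such components, I would pick a second one $\mathcal{D}\neq\mathcal{C}$. Using left-cone equivalences between appropriate subcategories of $\mathcal{H}(\Lambda)\vee\mathcal{T}(\Lambda)$ and $\mod B$ supplied by the tilting functor $\Hom_{\mathcal{H}(\Lambda)^{*}}(T^{*},-)$, and the fact that these equivalences commute with the Auslander-Reiten translations, I would pull back endpoints $X\in\mathcal{C}$, $Y\in\mathcal{D}$ to regular objects $\widetilde{X},\widetilde{Y}\in\mathcal{H}(\Lambda)$. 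Two applications of Lemma 2.3 then produce positive integers $r,s$ with $\Hom_{\mathcal{H}(\Lambda)}(\widetilde{X},\tau^{r}_{\mathcal{H}(\Lambda)}\widetilde{Y})\neq 0$ and $\Hom_{\mathcal{H}(\Lambda)}(\tau^{r}_{\mathcal{H}(\Lambda)}\widetilde{Y},\tau^{s}_{\mathcal{H}(\Lambda)}\widetilde{X})\neq 0$, and transporting these non-vanishings back to $\mod B$ would yield the sought external short path $X\to\tau^{r}_{B}Y\to\tau^{s}_{B}X$ of $\mathcal{C}$. The technically delicate point is the proper setup of these left-cone equivalences: one expects that, just as in Proposition 2.2, the argument must split into two sub-cases (going through a concealed canonical quotient of $B$, or applying the tilting functor directly) according to whether the $\mathbb{Z}\mathbb{A}_{\infty}$-type component sits on the torsion-free or torsion side of $T^{*}$.
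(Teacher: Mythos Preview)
Your overall strategy parallels the paper's, but there is a genuine gap in the initial reduction. The claim that ``every quasi-tilted algebra of canonical type is either an almost concealed canonical algebra or the opposite of one'' is false: the class is strictly larger. What the paper uses (from \cite{LS} and \cite{M}) is that any such $B$ admits \emph{two} quotient algebras $B^{(l)}$ and $B^{(r)}$, with $B^{(r)}$ almost concealed canonical of some type $\Lambda^{(r)}$ and $B^{(l)}$ the opposite of an almost concealed canonical algebra of a possibly different type $\Lambda^{(l)}$; each of $\Lambda^{(l)}$, $\Lambda^{(r)}$ may independently be Euclidean, tubular, or wild. Correspondingly $\Gamma_B=\mathcal{P}^B\vee\mathcal{T}^B\vee\mathcal{Q}^B$, where $\mathcal{T}^B$ is a separating family containing both ray \emph{and} coray tubes, $\mathcal{P}^B$ is governed by $B^{(l)}$, and $\mathcal{Q}^B$ by $B^{(r)}$. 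Your description of $\Gamma_B$ (ray tubes only, $\mathbb{Z}\mathbb{A}_\infty$ components only with ray insertions) covers just the special case $B=B^{(r)}$, and your Euclidean/tubular reduction via Proposition~2.2 does not dispose of the mixed cases where one side is wild and the other is not.

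There is also a technical point in the construction of the external short path. On the $\mathcal{P}^B$ side (when $B^{(l)}$ is of wild canonical type) the paper does exactly what you propose: left-cone equivalences into $\mathcal{H}(\Lambda^{(l)})$ via the dual of \cite[Theorem~3.4]{M}, followed by Lemma~2.3. But on the $\mathcal{Q}^B$ side (when $B^{(r)}$ is of wild canonical type) the relevant left cones, supplied by \cite[Theorems~6.1 and 6.4]{M}, land in the regular part of a wild \emph{concealed} (hereditary-type) quotient $C^{(r)}$ of $B^{(r)}$, not in $\mathcal{H}(\Lambda^{(r)})$; accordingly the paper invokes Lemma~2.1 there rather than Lemma~2.3. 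Your closing remark anticipates a two-case split, but the actual dichotomy is $\mathcal{P}^B$ versus $\mathcal{Q}^B$ with this asymmetric toolkit, not torsion-free versus torsion for a single tilting object $T^{*}$.
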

\begin{proof}
We start with the general view on the module category $\mod B$ due to results established in
\cite[Sections 3 and 4]{LS} and \cite{M}. There are an almost concealed canonical algebra
$B^{(r)}= \End_{\Lambda^{(r)}}(T^{(r)})$ and the opposite algebra $B^{(l)}$ of an almost
concealed canonical algebra $ \End_{\Lambda^{(l)}}(T^{(l)})$,
for canonical algebras $\Lambda^{(l)}$ and $\Lambda^{(r)}$ and tilting modules $T^{(l)} \in
\add (\mathcal{P}^{\Lambda^{(l)}} \vee \mathcal{T}^{\Lambda^{(l)}})$ and $T^{(r)} \in
\add (\mathcal{P}^{\Lambda^{(r)}} \vee \mathcal{T}^{\Lambda^{(r)}})$, such that $B^{(l)}$ and $B^{(r)}$ are quotient
algebras of $B$. Moreover, the Auslander-Reiten quiver  $\Gamma_B$ of $B$ has the disjoint union form
\[\Gamma_B= \mathcal{P}^B \vee \mathcal{T}^B \vee \mathcal{Q}^B,\]
where
\begin{itemize}
\item[(a)] $\mathcal{T}^B$ is a family of pairwise orthogonal generalized standard semi-regular tubes
(ray  and coray tubes) separating $\mathcal{P}^B$ from $\mathcal{Q}^B$.
\item[(b)] $\mathcal{P}^B=\mathcal{P}^{B^{(l)}}$ is a family  of components consisting entirely of
indecomposable $B^{(l)}$-modules and containing all indecomposable projective $B$-modules which are not
in $\mathcal{T}^B$.
\item[(c)] $\mathcal{P}^B$ contains a unique preprojective component $\mathcal{P}(B)$ of $\Gamma_B$,
and $\mathcal{P}(B)=\mathcal{P}(B^{(l)})$ coincides with a unique preprojective component $\mathcal{P}(C^{(l)})$
of the Auslander-Reiten quiver $\Gamma_{C^{(l)}}$ of a connected concealed quotient algebra $C^{(l)}$
of $B^{(l)}$, and hence of $B$.
\item[(d)] $\mathcal{Q}^B=\mathcal{Q}^{B^{(r)}}$ is a family of components consisting entirely of
indecomposable $B^{(r)}$-modules and containing all indecomposable injective $B$-modules which are not
in $\mathcal{T}^B$.
\item[(e)] $\mathcal{Q}^B$ contains a unique preinjective  component $\mathcal{Q}(B)$ of $\Gamma_B$,
and $\mathcal{Q}(B) = \mathcal{Q}(B^{(r)})$ coincides with a unique preinjective component $\mathcal{Q}(C^{(r)})$
of the Auslander-Reiten quiver  $\Gamma_{C^{(r)}}$ of a connected concealed quotient algebra $C^{(r)}$
of $B^{(r)}$, and hence of $B$.
\end{itemize}
Moreover, we have the following  description of components of $\Gamma_B$ contained in the parts $\mathcal{P}^B$
and $\mathcal{Q}^B$:
\begin{itemize}
\item[(1)] If $B^{(l)}$ is of Euclidean type, then $\mathcal{P}^B=\mathcal{P}(B)$.
\item[(2)] If $B^{(l)}$ is of tubular type, then
\[ \mathcal{P}^B = \mathcal{P}(B^{(l)}) \vee \mathcal{T}_0^{B^{(l)}} \vee (\bigvee_{q \in \mathbb{Q}^+}
\mathcal{T}^{B^{(l)}}_q).\]
\item[(3)] If $B^{(l)}$ is of wild canonical type, then every component of $\mathcal{P}^B$ different from
the preprojective component $\mathcal{P}(B)=\mathcal{P}(B^{(l)})=\mathcal{P}(C^{(l)})$ is obtained from
a component of the form $\mathbb{ZA}_{\infty}$ by a finite number (possibly empty) of ray insertions,
and there are infinitely many components of this type in $\mathcal{P}^B$.
\item[(4)] If $B^{(r)}$ is of Euclidean type , then $\mathcal{Q}^B=\mathcal{Q}(B)$.
\item[(5)] If $B^{(r)}$ is of tubular type, then
\[\mathcal{Q}^B= (\bigvee_{q \in \mathbb{Q}^+} \mathcal{T}^{B^{(r)}}_q) \vee \mathcal{T}^{B^{(r)}}_{\infty}
\vee \mathcal{Q}(B^{(r)}).\]
\item[(6)] If $B^{(r)}$ is of wild canonical type, then every component of $\mathcal{Q}^B$ different from the
preinjective component $\mathcal{Q}(B)=\mathcal{Q}(B^{(r)})= \mathcal{Q}(C^{(r)})$ is obtained from the component
of the form $\mathbb{ZA}_{\infty}$ by a finite number (possibly empty) of coray insertions,
and there are infinitely many components of this type in $\mathcal{Q}^B$.
\end{itemize}
It follows from the above facts that the preprojective components, preinjective components, ray tubes,
and coray tubes of $\Gamma_B$ are generalized standard, without external short paths, and clearly are almost periodic.
On the other hand, the components of $\Gamma_B$ obtained from the components of the form $\mathbb{ZA}_{\infty}$
by ray insertions or coray insertions are not almost  periodic, and hence are not generalized standard,
again by \cite[Theorem 2.3]{S2}. Therefore, it remains to show that all these components have external
short paths. We have two cases to consider.

Assume $\mathcal{C}$ is an acyclic component of $\Gamma_B$ with infinitely many $\tau_B$-orbits contained in the part
$\mathcal{P}^B$. Then, applying (1)-(3), we conclude that $B^{(l)}$ is of wild canonical type and
$\mathcal{C}$ is obtained from a component of the form $\mathbb{ZA}_{\infty}$ by a finite number
(possibly empty) of ray insertions. Since $\mathcal{P}^B$ contains infinitely many components we may also
choose in $\mathcal{P}^B$ a regular component $\mathcal{D}$ (of the form $\mathbb{ZA}_{\infty}$)
different from $\mathcal{C}$. Then it follows from the dual of \cite[Theorem 3.4]{M} that there are
components $\widetilde{\mathcal{C}}$ and $\widetilde{\mathcal{D}}$ of the form $\mathbb{ZA}_{\infty}$
in the Auslander-Reiten quiver $\Gamma_{\mathcal{H}(\Lambda^{(l)})}$ of the hereditary abelian category
$\mathcal{H}(\Lambda^{(l)})$, associated to the wild canonical algebra $\Lambda^{(l)}$, indecomposable
modules $X \in \mathcal{C}$ and $Y \in \mathcal{D}$, indecomposable objects $\widetilde{X} \in
\widetilde{\mathcal{C}}$ and $\widetilde{Y} \in \widetilde{\mathcal{D}}$,
and a functor $F^{(l)}: \mathcal{H}(\Lambda^{(l)}) \rightarrow
\mod B^{(l)}$ which induces equivalences of the additive categories of the left cones
$\xymatrix{F^{(l)}:\add(\rightarrow \widetilde{X})\ar[r]^(.55){\sim}& \add(\rightarrow X)}$ and
$\xymatrix{F^{(l)}:\add(\rightarrow \widetilde{Y})\ar[r]^(.55){\sim}& \add(\rightarrow Y)}$, such that
$F^{(l)}(\tau_{\mathcal{H}(\Lambda^{(l)})}M)=\tau_{B^{(l)}}F^{(l)}(M)$ and $F^{(l)}
(\tau_{\mathcal{H}(\Lambda^{(l)})}N)= \tau_{B^{(l)}}F^{(l)}(N)$ for all indecomposable objects
$M \in (\rightarrow \widetilde{X})$ and $N \in (\rightarrow\widetilde{Y})$. Applying Lemma 2.3, we
obtain that there exist positive integers $r$ and $s$ such that $\Hom_{\mathcal{H}(\Lambda^{(l)})}
(\widetilde{X}, \tau^r_{\mathcal{H}(\Lambda^{(l)})}\widetilde{Y}) \neq 0$ and
$\Hom_{\mathcal{H}(\Lambda^{(l)})}(\tau^r_{\mathcal{H}(\Lambda^{(l)})}\widetilde{Y},
\tau^s_{\mathcal{H}(\Lambda^{(l)})}\widetilde{X}) \neq 0$. Hence we get  $\Hom_{B^{(l)}}
(X, \tau^r_{B^{(l)}}Y) \neq 0$ and $\Hom_{B^{(l)}}(\tau^r_{B^{(l)}}Y, \tau^s_{B^{(l)}}X) \neq 0$,
and consequently an external short path $X \rightarrow \tau^r_{B^{(l)}}Y \rightarrow \tau^s_{B^{(l)}}X $
of $\mathcal{C}$ in $\mod B^{(l)}$, and hence in $\mod B$, because $B^{(l)}$ is a quotient algebra of $B$
and there is a fully faithful embedding $\mod B^{(l)}\hookrightarrow \mod B$.

Assume $\mathcal{C}$ is an acyclic component of $\Gamma_B$ with infinitely  many $\tau_B$-orbits contained in the part
$\mathcal{Q}^B$. Then, applying (4)-(6), we conclude that $B^{(r)}$ is of wild canonical type and $\mathcal{C}$
is obtained from a component of the form $\mathbb{ZA}_{\infty}$ by a finite number (possibly  empty) of
coray insertions. Since $\mathcal{Q}^B$ contains  infinitely  many components, we may choose in $\mathcal{Q}^B$
a regular component $\mathcal{D}$ (of the form $\mathbb{ZA}_{\infty}$) different from $\mathcal{C}$.
Moreover, by \cite[Theorem 6.1]{M}, the connected concealed quotient algebra $C^{(r)}$ of $B^{(r)}$ such that
$\mathcal{Q}(C^{(r)})$ is a unique preinjective component of $\Gamma_B$, is a wild concealed algebra.
Hence, $C^{(r)}=\End_{H^{(r)}}(T^{(r)})$ for a wild hereditary algebra $H^{(r)}$ and a multiplicity-free tilting
$H^{(r)}$-module from the additive category $\add (\mathcal{Q}(H^{(r)}))$ of the preinjective component
$\mathcal{Q}(H^{(r)})$ of $\Gamma_{H^{(r)}}$. In particular, the functor
$\Ext^1_{H^{(r)}}(T^{(r)},-): \mod H^{(r)}
\rightarrow \mod C^{(r)}$ induces an equivalence $\xymatrix{\add(\mathcal{R}(H^{(r)}))
\ar[r]^(.49){\sim}& \add(\mathcal{R}(C^{(r)}))}$ of the categories of regular modules over $H^{(r)}$ and $C^{(r)}$.
Applying Lemma 2.1, we conclude that for any indecomposable modules $M$ and $N$ in $\mathcal{R}(C^{(r)})$
there exists a positive integer $p$ such that $\Hom_{C^{(r)}}(M, \tau^s_{C^{(r)}}N) \neq 0$ for all integers $s \geqslant p$.
On the other hand, it follows from \cite[Theorem 6.4]{M} that there exist indecomposable modules $X \in
\mathcal{C}$ and $Y \in \mathcal{D}$ such that the left cones $(\rightarrow X)$ of $\mathcal{C}$ and
$(\rightarrow Y)$ of $\mathcal{D}$ consist entirely of indecomposable $C^{(r)}$-modules and the
restriction of $\tau_{B^{(r)}}$ to the left cones $(\rightarrow X)$ and $(\rightarrow Y)$ coincides
with $\tau_{C^{(r)}}$. Hence, the left cone $(\rightarrow X)$ of $\mathcal{C}$ is the left cone
$(\rightarrow \widetilde{X})$, with $X=\widetilde{X}$, of a component $\widetilde{\mathcal{C}}$
of type $\mathbb{ZA}_{\infty}$ of $\Gamma_{C^{(r)}}$, and the left cone $(\rightarrow Y)$ of
$\mathcal{D}$ is the left cone $(\rightarrow \widetilde{Y})$, with $\widetilde{Y}=Y$, of a component
$\widetilde{\mathcal{D}}$ of the form $\mathbb{ZA}_{\infty}$ of $\Gamma_{C^{(r)}}$. Note that
$\widetilde{\mathcal{C}}\neq \widetilde{\mathcal{D}}$ since $\mathcal{C} \neq \mathcal{D}$. Then there
exist positive integers $p$ and $q$ such that $\Hom_{B^{(r)}}(X, \tau^p_{B^{(r)}}Y) = \Hom_{C^{(r)}}
(X, \tau^p_{C^{(r)}}Y) \neq 0$ and $\Hom_{B^{(r)}}(\tau^p_{B^{(r)}}Y, \tau^q_{B^{(r)}}X) =
\Hom_{C^{(r)}}(\tau^p_{C^{(r)}}Y, \tau^q_{C^{(r)}}X) \neq 0$. Therefore, we obtain an external short path
$X \rightarrow \tau^p_{B^{(r)}}Y \rightarrow \tau^q_{B^{(r)}}X$ of $\mathcal{C}$ in $\mod B^{(r)}$,
and so in $\mod B$, because $B^{(r)}$ is a quotient algebra of $B$ and there is a fully faithful embedding
$\mod B^{(r)} \hookrightarrow \mod B$.

We note also that although the proofs in the two considered cases are similar, the applied results concerning the structure of left cones
of acyclic components in $\mathcal{P}^B$ and $\mathcal{Q}^B$ are different.
\end{proof}
The following corollary is a direct consequence of Propositions 2.2 and 2.4.
\begin{corollary}
Let $B$ be a quasi-tilted algebra. The following statements are equivalent.
\begin{itemize}
\item[(i)] No component of $\Gamma_B$ has an external short path.
\item[(ii)] The component quiver $\Sigma_B$ is acyclic.
\item[(iii)] Every component of $\Gamma_B$ is generalized standard.
\item[(iv)] $\Gamma_B$ is almost periodic.
\end{itemize}
\end{corollary}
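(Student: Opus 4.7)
The plan is to reduce the four asserted equivalences to the componentwise equivalences already proved in Propositions 2.2 and 2.4, supplementing only where a genuinely global argument is required.

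Three of the four equivalences follow immediately. Each of statements (i), (iii), (iv) has the form ``every component of $\Gamma_B$ has a certain property'', so the chain (i)$\Leftrightarrow$(iii)$\Leftrightarrow$(iv) is obtained by applying Proposition 2.2 (if $B$ is tilted) or Proposition 2.4 (if $B$ is quasi-tilted of canonical type) componentwise. Moreover, (ii)$\Rightarrow$(iii) is built into the definition of $\Sigma_B$: acyclicity excludes loops, and the absence of a loop at $\mathcal{C}$ is precisely the condition $\rad_B^{\infty}(X,Y)=0$ for all $X,Y\in\mathcal{C}$, i.e.\ that $\mathcal{C}$ is generalized standard.

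The content of the corollary therefore rests on (iii)$\Rightarrow$(ii). Using (iii)$\Leftrightarrow$(iv), every component of $\Gamma_B$ is preprojective, preinjective, a ray or coray tube, or (in the tilted case) the connecting component $\mathcal{C}_T$. In the tilted case the decomposition $\Gamma_B = \bigcup_i \mathcal{Y}\Gamma_{B^{(l)}_i} \cup \mathcal{C}_T \cup \bigcup_j \mathcal{X}\Gamma_{B^{(r)}_j}$ recalled in the proof of Proposition 2.2 then forces each $\mathcal{Y}\Gamma_{B^{(l)}_i}$ (respectively $\mathcal{X}\Gamma_{B^{(r)}_j}$) to reduce to the Euclidean/Dynkin shape of cases (1) and (3) there: a unique preprojective (preinjective) component together with a family of pairwise orthogonal generalized standard ray (coray) tubes. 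The separating property of $\mathcal{C}_T$ between $\mathcal{Y}(T)$ and $\mathcal{X}(T)$ kills any backwards $\rad_B^{\infty}$-arrow in $\Sigma_B$ between distinct strata, while the orthogonality of the tubes in each block together with the unidirectional flow from the preprojective component (resp.\ into the preinjective component) towards the associated tubes kills cycles within a single stratum. Hence $\Sigma_B$ is acyclic. The canonical case proceeds identically, replacing the above decomposition with $\Gamma_B = \mathcal{P}^B \vee \mathcal{T}^B \vee \mathcal{Q}^B$ from the proof of Proposition 2.4 and $\mathcal{C}_T$ with the separating family of semi-regular tubes $\mathcal{T}^B$.

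The main obstacle is precisely this bookkeeping in (iii)$\Rightarrow$(ii): upgrading ``no $\rad^{\infty}$-loop at every vertex'' to ``no oriented cycle in $\Sigma_B$'' requires the systematic invocation of the separating property (to exclude backwards inter-stratum arrows) and of the pairwise orthogonality of the tube families (to exclude intra-stratum cycles). Beyond reading off these structural facts from the proofs of Propositions 2.2 and 2.4, no further computation is needed.
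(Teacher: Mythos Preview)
Your proposal is correct and follows the paper's intended approach: the paper itself offers no proof beyond the sentence ``The following corollary is a direct consequence of Propositions 2.2 and 2.4,'' so your explicit unpacking of the argument is already more than the authors provide. You have correctly identified that (i)$\Leftrightarrow$(iii)$\Leftrightarrow$(iv) are componentwise consequences of those propositions, that (ii)$\Rightarrow$(iii) is definitional, and that the only step with content is (iii)$\Rightarrow$(ii), which you obtain by reading off the separating and orthogonality properties from the structural decompositions in the proofs of Propositions 2.2 and 2.4.

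One minor point: in the tilted case you should also note that distinct left blocks $\mathcal{Y}\Gamma_{B^{(l)}_i}$ (and likewise distinct right blocks) cannot form a cycle among themselves in $\Sigma_B$; this follows from the fact that the corresponding subquivers $\Delta^{(l)}_i$ are pairwise disjoint and the supports of the $B^{(l)}_i$ are correspondingly independent, but it is worth stating since your wording ``within a single stratum'' leaves this implicit. Similarly, in the canonical case with a tubular $B^{(l)}$ or $B^{(r)}$, the families $\mathcal{T}^{B^{(l)}}_q$ for varying $q\in\mathbb{Q}^+$ are not mutually orthogonal, but the flow between them is unidirectional (ordered by $q$), which still yields acyclicity.
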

We end this section with an example showing that an Auslander-Reiten component without external short paths is not necessarily generalized standard.
\begin{example}\label{ex2.6}
{\rm Let $K$ be an algebraically closed field, $Q$ the quiver
\[\xymatrix@C=16pt@R=16pt{
&6\ar[r]^{\xi}\ar[dd]_{\delta}&7\ar[d]^{\eta}\cr
1&&4\ar[ld]_{\gamma}\cr
&3\ar[lu]_{\alpha}\ar[ld]^{\beta}\cr
2&&5\ar[lu]^{\sigma}\cr
}\]
$I$ the ideal in the path algebra $KQ$ of $Q$ generated by the elements $\delta\alpha$, $\delta\beta$, $\xi\eta$ and $\eta\gamma$, and $A=KQ/I$ the
associated bound quiver algebra. We denote by $H$ the path algebra $K\Delta$ of the full subquiver $\Delta$ of $Q$ given by the vertices
$1, 2, 3, 4, 5$ and the arrows $\alpha$, $\beta$, $\gamma$, $\sigma$. Further, let $\Omega$ be the quiver obtained from $Q$ by removing the arrow $\eta$,
$J$ the ideal of the path algebra $K\Omega$ of $\Omega$ generated by $\delta\alpha$ and $\delta\beta$, and $B=K\Omega/J$ the associated bound quiver
algebra. Then $H$ is a hereditary algebra of Euclidean type $\widetilde{\mathbb{D}}_4$ whose Auslander-Reiten quiver $\Gamma_H$ consists of a preprojective
component $\mathcal{P}(H)$, a preinjective component $\mathcal{Q}(H)$, and a $\mathbb{P}_1(K)$-family
$\mathcal{T}^H=(\mathcal{T}_{\lambda}^H)_{\lambda\in\mathbb{P}_1(K)}$ of pairwise orthogonal generalized standard stable tubes, the three tubes
$\mathcal{T}_{\infty}^H$, $\mathcal{T}_{0}^H$, $\mathcal{T}_{1}^H$ of rank $2$, and the remaining tubes $\mathcal{T}_{\lambda}^H$,
$\lambda\in\mathbb{P}_1(K)\setminus\{0,1,\infty\}$, of rank $1$ (we refer to \cite[Theorem XIII.2.9]{SS1} for a detailed description of the stable tubes
of $\Gamma_H$). In particular, the simple $H$-module $S_3$ at the vertex $3$ lies on the mouth of a stable tube, say $\mathcal{T}_{1}^H$ of rank $2$.
Further, $B$ is a tubular (branch) extension (see \cite[(4.7)]{R2} or \cite[Chapters XV-XVII]{SS2}) of $H$, involving the simple module $S_3$, which is
a tilted algebra of Euclidean type $\widetilde{\mathbb{D}}_6$ such that the Auslander-Reiten quiver $\Gamma_B$ of $B$ has disjoint union form
\[ \Gamma_B = \mathcal{P}(B)\vee\mathcal{T}^B\vee\mathcal{Q}(B), \]
where $\mathcal{P}(B)=\mathcal{P}(H)$ is a unique preprojective component, $\mathcal{Q}(B)$ is a unique preinjective component containing all
indecomposable injective $B$-modules and $\mathcal{T}^B=(\mathcal{T}_{\lambda}^B)_{\lambda\in\mathbb{P}_1(K)}$ is a $\mathbb{P}_1(K)$-family
of pairwise orthogonal generalized standard ray tubes with $\mathcal{T}_{\lambda}^B=\mathcal{T}_{\lambda}^H$ for $\lambda\in\mathbb{P}_1(K)\setminus\{1\}$,
and $\mathcal{T}_{1}^B$ is a ray tube obtained from the stable tube $\mathcal{T}_{1}^H$ by insertion of two rays, containing the indecomposable projective
$B$-module $P_6$ and $P_7$ at the vertices $6$ and $7$. Moreover, we have $\Hom_B(\mathcal{T}^B,\mathcal{P}(B))=0$, $\Hom_B(\mathcal{Q}(B),\mathcal{T}^B)=0$,
$\Hom_B(\mathcal{Q}(B),\mathcal{P}(B))=0$, and $\Hom_B(\mathcal{P}(B),\mathcal{T}_{\lambda}^B)\neq 0$, $\Hom_B(\mathcal{T}_{\lambda}^B,\mathcal{Q}(B))\neq 0$
for all $\lambda\in\mathbb{P}_1(K)$. Observe also that $B$ is the quotient algebra of $A$ by the ideal generated by the coset $\eta + I$ of the arrow $\eta$,
and hence we have the fully faithful embedding $\mod B\hookrightarrow\mod A$. Then using the canonical equivalences
$\mod A\buildrel{\thicksim}\over{\hbox to 6mm{\rightarrowfill}}\rep_K(Q,I)$ and
$\mod B\buildrel{\thicksim}\over{\hbox to 6mm{\rightarrowfill}}\rep_K(\Omega,J)$ (see \cite[Theorem III.1.6]{ASS}) we easily infer that there is only one
indecomposable module in $\mod A$ which is not in $\mod B$, namely the $2$-dimensional projective-injective module $P_7=I_4$ whose socle is the simple
module $S_4$ at the vertex $4$ and whose top is the simple module $S_7$ at the vertex $7$. Therefore, the Auslander-Reiten quiver $\Gamma_A$ of $A$ has
the disjoint union form
\[ \Gamma_A = \mathcal{P}(A)\vee\mathcal{T}^A\vee\mathcal{C}, \]
where $\mathcal{P}(A)=\mathcal{P}(B)=\mathcal{P}(H)$ is a preprojective component,
$\mathcal{T}^A=(\mathcal{T}_{\lambda}^A)_{\lambda\in\mathbb{P}_1(K)\setminus\{1\}}=$  $(\mathcal{T}_{\lambda}^B)_{\lambda\in\mathbb{P}_1(K)\setminus\{1\}}$
is a family of pairwise orthogonal generalized standard stable tubes, and $\mathcal{C}$ is the component of the form below, obtained by gluing the ray
tube $\mathcal{T}_1^B$ and the preinjective component $\mathcal{Q}(B)$ by the projective-injective module $P_7=I_4$,


\begin{center}
\setlength{\unitlength}{0.7mm}
\begin{picture}(100,107)(-50,-100)


\put(0,0){\makebox(0,0){$\circ$}}
\put(-10,-10){\makebox(0,0){$\circ$}}
\put(-10,-10){\makebox(0,0){$\circ$}}
\put(-10,-20){\makebox(0,0){$\circ$}}
\put(-10,-30){\makebox(0,0){$\circ$}}
\put(-20,-20){\makebox(0,0){$\circ$}}
\put(-20,-40){\makebox(0,0){$\circ$}}
\put(-30,-10){\makebox(0,0){$\circ$}}
\put(-30,-20){\makebox(0,0){$\circ$}}
\put(-30,-30){\makebox(0,0){$\circ$}}
\put(-40,-20){\makebox(0,0){$\circ$}}
\put(-40,-40){\makebox(0,0){$\circ$}}
\put(-50,-10){\makebox(0,0){$\circ$}}
\put(-50,-20){\makebox(0,0){$\circ$}}
\put(-50,-30){\makebox(0,0){$\circ$}}
\put(-50,-40){\makebox(0,0){$\circ$}}
\put(-50,-50){\makebox(0,0){$\circ$}}
\put(-60,-20){\makebox(0,0){$\circ$}}
\put(-60,-40){\makebox(0,0){$\circ$}}
\put(-70,-10){\makebox(0,0){$\circ$}}
\put(-70,-20){\makebox(0,0){$\circ$}}
\put(-70,-30){\makebox(0,0){$\circ$}}
\put(-70,-40){\makebox(0,0){$\circ$}}
\put(-70,-50){\makebox(0,0){$\circ$}}


\multiput(-49,-49)(10,10){5}{\vector(1,1){8}}
\multiput(-69,-49)(10,10){4}{\vector(1,1){8}}
\multiput(-69,-29)(10,10){2}{\vector(1,1){8}}
\multiput(-19,-39)(10,10){1}{\vector(1,1){8}}

\multiput(-69,-31)(10,-10){2}{\vector(1,-1){8}}
\multiput(-69,-11)(10,-10){3}{\vector(1,-1){8}}
\multiput(-49,-11)(10,-10){3}{\vector(1,-1){8}}
\multiput(-29,-11)(10,-10){2}{\vector(1,-1){8}}

\multiput(-69,-20)(10,0){6}{\vector(1,0){8}}
\multiput(-69,-40)(10,0){3}{\vector(1,0){8}}


\put(10,-10){\makebox(0,0){$\circ$}}
\put(30,-10){\makebox(0,0){$\circ$}}
\put(50,-10){\makebox(0,0){$\circ$}}
\put(70,-10){\makebox(0,0){$\circ$}}
\put(10,-30){\makebox(0,0){$\circ$}}
\put(30,-30){\makebox(0,0){$\circ$}}
\put(50,-30){\makebox(0,0){$\circ$}}
\put(70,-30){\makebox(0,0){$\circ$}}
\put(10,-50){\makebox(0,0){$\circ$}}
\put(30,-50){\makebox(0,0){$\circ$}}
\put(50,-50){\makebox(0,0){$\circ$}}
\put(70,-50){\makebox(0,0){$\circ$}}
\put(10,-70){\makebox(0,0){$\circ$}}
\put(30,-70){\makebox(0,0){$\circ$}}
\put(50,-70){\makebox(0,0){$\circ$}}
\put(70,-70){\makebox(0,0){$\circ$}}

\put(20,-20){\makebox(0,0){$\circ$}}
\put(20,-40){\makebox(0,0){$\circ$}}
\put(20,-60){\makebox(0,0){$\circ$}}
\put(40,-20){\makebox(0,0){$\circ$}}
\put(40,-40){\makebox(0,0){$\circ$}}
\put(40,-60){\makebox(0,0){$\circ$}}
\put(60,-20){\makebox(0,0){$\circ$}}
\put(60,-40){\makebox(0,0){$\circ$}}
\put(60,-60){\makebox(0,0){$\circ$}}

\put(10,-90){\makebox(0,0){$\circ$}}
\put(30,-90){\makebox(0,0){$\circ$}}
\put(50,-90){\makebox(0,0){$\circ$}}
\put(70,-90){\makebox(0,0){$\circ$}}
\put(20,-80){\makebox(0,0){$\circ$}}
\put(40,-80){\makebox(0,0){$\circ$}}
\put(60,-80){\makebox(0,0){$\circ$}}


\multiput(1,-1)(10,-10){7}{\vector(1,-1){8}}
\multiput(31,-11)(10,-10){4}{\vector(1,-1){8}}
\multiput(51,-11)(10,-10){2}{\vector(1,-1){8}}
\multiput(11,-31)(10,-10){6}{\vector(1,-1){8}}
\multiput(11,-51)(10,-10){4}{\vector(1,-1){8}}
\multiput(11,-71)(10,-10){2}{\vector(1,-1){8}}

\multiput(11,-29)(10,10){2}{\vector(1,1){8}}
\multiput(11,-49)(10,10){4}{\vector(1,1){8}}
\multiput(11,-69)(10,10){6}{\vector(1,1){8}}

\multiput(11,-89)(10,10){6}{\vector(1,1){8}}

\multiput(31,-89)(10,10){4}{\vector(1,1){8}}
\multiput(51,-89)(10,10){2}{\vector(1,1){8}}


\multiput(10,-31)(0,-5){15}{\line(0,-1){2}}
\multiput(70,-11)(0,-5){19}{\line(0,-1){2}}

\multiput(-75,-20)(-3,0){3}{\makebox(0,0){$\cdot$}}
\multiput(-75,-40)(-3,0){3}{\makebox(0,0){$\cdot$}}

\multiput(30,-95)(0,-3){3}{.}
\multiput(50,-95)(0,-3){3}{.}


\put(0,4){\makebox(0,0){$P_7$}}
\put(-6,-11){\makebox(0,0){$S_4$}}
\put(-6,-21){\makebox(0,0){$S_5$}}
\put(-6,-31){\makebox(0,0){$S_6$}}
\put(6,-11){\makebox(0,0){$S_7$}}
\put(-17,-42){\makebox(0,0){$I_7$}}
\put(-50,-44){\makebox(0,0){$I_1$}}
\put(-50,-54){\makebox(0,0){$I_2$}}
\put(14,-20){\makebox(0,0){$P_6$}}
\put(7,-31){\makebox(0,0){$S_3$}}
\put(70,-6){\makebox(0,0){$S_3$}}

\end{picture}
\end{center}
\vspace{0,7cm}
where the modules along the vertical dashed lines have to be identified. We note that $\Hom_A(\mathcal{C},\mathcal{D})=0$ for any component $\mathcal{D}$
of $\Gamma_A$ different from $\mathcal{C}$, and hence $\mathcal{C}$ has no external short path in $\mod A$. On the other hand, the canonical monomorphism
from the simple $A$-module $S_7$ to its injective envelope $I_7$ in $\mod A$ belongs to $\rad^{\infty}_A(S_7,I_7)$, and so the component $\mathcal{C}$ is
not generalized standard.
}
\end{example}

\section{Proofs of Theorems \ref{thm1} and \ref{thm2}}

\noindent The aim of this section is to provide the proof of Theorem \ref{thm1}. Observe that Theorem \ref{thm2} follows from Theorem \ref{thm1} applied
to the opposite algebra $A^{\op}$ of $A$.

Let $A$ be an algebra. Following \cite{AR} a module $M$ in $\mod A$ is said to be the \emph{middle of a
short chain} if there is some indecomposable module $X$ in $\mod A$ with $\Hom_A(X,M) \neq 0$ and
$\Hom_A(M, \tau_AX)\neq 0$. We note that if $M$ and $N$ are indecomposable modules in $\mod A$ with the
same composition factors and $M$ is not the middle of a short chain, then $M$ and $N$ are isomorphic
(see \cite{AR}, \cite{RSS1}).

The following lemma follows from \cite[Lemma 1.3]{RS1} (see also \cite[Theorem 1.6]{RSS1}).

\begin{lemma}
Let $A$ be an artin algebra, $M$ a module in $\mod A$ and $X$ an indecomposable module in $\mod A$
which is not isomorphic to  a direct summand of $M$. Assume that $\Hom_A(X,M)\neq 0$ and
$\Hom_A(M, \tau_AX) \neq 0$. Then there is a short path $Y \rightarrow V \rightarrow Z$
in $\mod A$, where $Y$ and $Z$ are indecomposable direct  summands of $M$, and $V=X$ or
$V$ is an indecomposable direct  summand of the middle term $E$ of an Auslander-Reiten sequence
$0 \rightarrow \tau_AX \rightarrow E \rightarrow X \rightarrow 0$ in $\mod A$.
\end{lemma}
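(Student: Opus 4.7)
The plan is to construct the short path by decomposing $M$ into its indecomposable summands and analysing the hypothesised nonzero maps componentwise, with a case distinction on whether $\Hom_A(M,X)$ vanishes. Write $M = M_1 \oplus \cdots \oplus M_n$ with each $M_i$ indecomposable, and fix nonzero morphisms $f \colon X \to M$ and $g \colon M \to \tau_A X$. Their components $f_i \colon X \to M_i$ and $g_j \colon M_j \to \tau_A X$ are automatically non-isomorphisms whenever nonzero, because $X$ is not isomorphic to any summand $M_i$; fix indices $i_0$, $j_0$ with $f_{i_0}, g_{j_0} \neq 0$. If moreover $\Hom_A(M, X) \neq 0$, then some component $h_k \colon M_k \to X$ of a nonzero $h \colon M \to X$ is nonzero and again a non-isomorphism, so $M_k \xrightarrow{h_k} X \xrightarrow{f_{i_0}} M_{i_0}$ is the desired short path with $V = X$.

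Assume next that $\Hom_A(M, X) = 0$, and consider the Auslander-Reiten sequence $\eta \colon 0 \to \tau_A X \xrightarrow{u} E \xrightarrow{v} X \to 0$. Applying $\Hom_A(M, -)$ to $\eta$, the vanishing of $\Hom_A(M,X)$ forces $u_*$ to be an isomorphism, so $ug \colon M \to E$ is nonzero; dually, applying $\Hom_A(-, M)$ to $\eta$ yields the injection $v^* \colon \Hom_A(X,M) \hookrightarrow \Hom_A(E,M)$, so $fv \colon E \to M$ is nonzero. Decomposing $E = E_1 \oplus \cdots \oplus E_r$ into indecomposable summands, I would seek an index $k$ such that both the component $(ug)_k \colon M \to E_k$ and the restriction $(fv)|_{E_k} \colon E_k \to M$ are nonzero; extracting their nonzero components at $M$ yields non-isomorphisms $M_j \to E_k$ and $E_k \to M_i$, and the required short path with $V = E_k$ an indecomposable summand of $E$.

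The main obstacle is the index-matching step in the second case. Two observations drive it: firstly, $v|_{E_k} \neq 0$ for every summand $E_k$, since otherwise $E_k \subseteq \ker v = u(\tau_A X)$ would be an indecomposable direct summand of $E$ sitting inside the indecomposable subobject $u(\tau_A X)$, forcing $\eta$ to split and contradicting its almost splitness; secondly, the factorisation of every $M \to E$ through $u$ (by exactness and $\Hom_A(M,X) = 0$) rigidly constrains $(ug)_k$ to come from a specific map to $\tau_A X$. A careful bookkeeping of the supports of $ug$ and $fv$ among the $E_k$, as in \cite[Lemma 1.3]{RS1}, shows that the two systems of indices necessarily overlap, delivering the desired $k$. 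A final subtlety to verify is non-isomorphy of the two extracted arrows: should the chosen $E_k$ happen to coincide with one of the $M_j$ or $M_i$, a small adjustment of the indices (or replacing $f$, $g$ by other nonzero representatives) produces a genuine short path of non-isomorphisms.
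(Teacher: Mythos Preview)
The paper does not actually prove this lemma; it simply records that it follows from \cite[Lemma 1.3]{RS1}. So there is no detailed argument in the paper to compare against, and your write-up is in effect an attempt to reconstruct that cited proof.

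Your Case~1 ($\Hom_A(M,X)\neq 0$) is clean and correct. The genuine gap is in Case~2: you correctly identify the index-matching step as the crux, but then you do not prove it---you gesture at two observations and then invoke \cite[Lemma 1.3]{RS1}, which is exactly the reference the paper already cites for the whole statement. As written, neither of your two observations (nonvanishing of each $v|_{E_k}$, and the factorisation of maps $M\to E$ through $u$) forces the supports of $ug$ and $fv$ on the summands $E_k$ to overlap; one can imagine $u_k$ epi and $v_k$ mono on one summand and the reverse on another, with the supports a~priori disjoint.

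The missing link is short, however, and your set-up is exactly right for it. Pick any $k$ with $\Hom_A(M,E_k)\neq 0$ (such $k$ exists since $u_*$ is an isomorphism). For any nonzero $h\colon M\to E_k$ the composite $v_k h\colon M\to X$ lies in $\Hom_A(M,X)=0$. As $v_k$ is irreducible it is a monomorphism or an epimorphism; were it mono, $v_k h=0$ would force $h=0$. Hence $v_k$ is epi, and then $f v_k\colon E_k\to M$ is nonzero because $f\neq 0$. Thus the same $E_k$ carries nonzero maps from and to $M$, giving the path $M_j\to E_k\to M_i$. This also dissolves your ``final subtlety'': if $E_k$ were isomorphic to a summand of $M$ then $v_k$ would furnish a nonzero map in $\Hom_A(M,X)$, contradicting the Case~2 hypothesis, so both arrows are automatically non-isomorphisms. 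With this paragraph inserted in place of your appeal to \cite{RS1}, the argument is complete.
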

The next proposition will reduce the proof of Theorem \ref{thm1} to Theorem \ref{thm6}.
\begin{proposition}
Let $A$ be an algebra, $\mathcal{C}$ a semi-regular component of $\Gamma_A$ without
external short paths, and $B=A/ \ann_A(\mathcal{C})$. Then $B$ is a quasi-tilted algebra.
\end{proposition}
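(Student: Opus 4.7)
The plan is to reduce to the faithful case and then verify the Happel-Reiten-Smal\o\ homological characterization of quasi-tilted algebras: $B$ is quasi-tilted if and only if every indecomposable module $M$ in $\mod B$ satisfies $\pd_B M\leqslant 1$ or $\id_B M\leqslant 1$. Since the canonical embedding $\mod B\hookrightarrow\mod A$ is fully faithful and preserves both indecomposability and the infinite radical, $\mathcal{C}$ is a faithful component of $\Gamma_B$ that still admits no external short path in $\mod B$, and so I may assume $A=B$ from the outset. Passing to $A^{\op}$ interchanges the two semi-regular cases, so I may further assume that $\mathcal{C}$ contains no injective modules; in particular $\tau_A^- X$ exists in $\mathcal{C}$ for every $X\in\mathcal{C}$.

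Suppose, for a contradiction, that some indecomposable $N$ in $\mod A$ satisfies both $\pd_A N\geqslant 2$ and $\id_A N\geqslant 2$. The first inequality, via a standard dimension-shift through the projective cover of $N$ together with the Auslander-Reiten formula, furnishes an indecomposable module $X$ not isomorphic to a direct summand of $N$ and a non-zero morphism $X\to\tau_A N'$, where $N'$ is a non-projective indecomposable summand of the first syzygy of $N$; symmetrically, $\id_A N\geqslant 2$ yields an indecomposable $Y$ and a non-zero morphism $\tau_A^- N''\to Y$ built from the first cosyzygy of $N$. Because $\mathcal{C}$ is faithful, every composition factor of each of $N$, $X$, $Y$, $N'$, $N''$ occurs in some module of $\mathcal{C}$, and the injective envelopes and projective covers of these composition factors translate into non-zero homomorphisms linking these modules to suitable indecomposables of $\mathcal{C}$.

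A careful chaining of these Hom-relations, combined with repeated application of Lemma 3.1 at every junction where two consecutive maps fit its hypothesis, produces a short path $Z_1\to V\to Z_2$ in $\mod A$ with $Z_1, Z_2\in\mathcal{C}$ and $V$ not a direct summand of any module in $\mathcal{C}$, contradicting the hypothesis that $\mathcal{C}$ admits no external short path. The principal obstacle is arranging this chain so that the middle term $V$ produced by Lemma 3.1 genuinely lies outside $\mathcal{C}$; this requires a careful choice of $N'$, $N''$ and of the indecomposables of $\mathcal{C}$ linked to $X$ and $Y$, exploiting both the absence of injective modules in $\mathcal{C}$ and the fact that $N$ is simultaneously far from projective and far from injective. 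Once this contradiction is established, the Happel-Reiten-Smal\o\ criterion yields that $B$ is quasi-tilted.
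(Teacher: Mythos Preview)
Your proposal has genuine gaps, and in its present form it is not a proof.

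First, the Happel--Reiten--Smal{\o} criterion you quote is incomplete: one needs \emph{both} $\gldim B\leqslant 2$ and the condition that each indecomposable has $\pd\leqslant 1$ or $\id\leqslant 1$. The paper proves $\gldim A\leqslant 2$ as a separate first step, and this bound is then used essentially (to kill an $\Ext^3$ term and to push a vanishing $\Ext^2$ along a short exact sequence). You never establish it.

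Second, and more seriously, you are missing the central device of the argument. The paper fixes a module $M\in\add(\mathcal{C})$ with $\ann_A(M)=0$; faithfulness then gives a monomorphism $A\hookrightarrow M^r$ and an epimorphism $M^s\twoheadrightarrow D(A)$. These are what convert the homological conditions into Hom-conditions against $\mathcal{C}$: for instance $\pd_A X\geqslant 2$ becomes $\Hom_A(D(A),\tau_A X)\neq 0$, hence $\Hom_A(M,\tau_A X)\neq 0$, and similarly on the other side. Lemma~3.1 is then applied to the resulting short chain $X\to M\to\tau_A X$, and the middle term $V$ it produces lies outside $\mathcal{C}$ automatically because it is $X$ or a neighbour of $X$ in $\Gamma_A$. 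Your sketch instead passes through syzygies $N'$, $N''$ and unspecified modules $X$, $Y$; the claimed map $X\to\tau_A N'$ is not what $\pd_A N\geqslant 2$ gives you, and there is no mechanism in your outline to guarantee that the endpoints of the short path land in $\mathcal{C}$ or that the middle lands outside it. Saying that a ``careful chaining'' handles this is precisely where the work lies: the paper's argument for $N\notin\mathcal{C}$ with $\id_A N\geqslant 2$ is genuinely delicate, proving first $\pd_A\tau_A^- N\leqslant 1$, then $\pd_A L\leqslant 1$ for each summand $L$ of the middle of the Auslander--Reiten sequence ending in $\tau_A^- N$ (with separate treatments of the mono and epi cases for the irreducible map $L\to\tau_A^- N$), and only then deducing $\pd_A N\leqslant 1$ via the global-dimension bound. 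None of this structure is visible in your proposal.
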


\begin{proof}
We may assume (by duality) that $\mathcal{C}$ is without projective modules.
Since $\mathcal{C}$ is a component of $\Gamma_B$ with $\ann_B (\mathcal{C})=0$, we may also assume that
$\ann_A (\mathcal{C})=0$. We choose a module $M$ in the additive category  $\add (\mathcal{C})$
of $\mathcal{C}$ such that $\ann_A(M)=\ann_A (\mathcal{C})=0$ (see \cite[Lemma 1.1]{RS1}). Then there
are a monomorphism $A \rightarrow M^r$ and an epimorphism $M^s \rightarrow D(A)$ in $\mod A$,
for some positive integers $r$ and $s$ (see \cite[Lemma VI.2.2]{ASS}).

We prove first that $\gldim A \leqslant 2$. Take an indecomposable projective module $P$ in $\mod A$
and an indecomposable direct summand $X$ of the radical $\rad P$ of $P$. Observe that $X$ is not in
$\mathcal{C}$ since there is an irreducible homomorphism $X \rightarrow P$ and $P$ is not  in
$\mathcal{C}$. On the other hand, we have $\Hom_A(X,M)\neq 0$ because there is a monomorphism
$A \rightarrow M^r$ and $P$ is a direct summand of $A$. We claim that $\pd_AX\leqslant 1$.
Assume $\pd_AX\geqslant 2$. Then we have $\Hom_A(D(A), \tau_AX) \neq 0$ (see \cite[Lemma IV.2.7]{ASS}
or \cite[(2.4)]{R2}), and hence $\Hom_A(M,\tau_AX)\neq 0$, because  there is an epimorphism
$M^s \rightarrow D(A)$ in $\mod A$. Thus $M$ is the middle of a short chain $X \rightarrow M
\rightarrow \tau_AX$ with $M$ in $\add(\mathcal{C})$ and $X, \tau_AX$ not in $\mathcal{C}$.
Applying Lemma 3.1, we conclude that there is a short path $Y \rightarrow V \rightarrow  Z$,
where $Y$ and $Z$ are indecomposable direct summands of $M$, and $V=X$ or $V$ is an indecomposable
direct  summand of the middle term $E$ of an Auslander-Reiten sequence
$$0 \rightarrow \tau_AX \rightarrow E \rightarrow X \rightarrow 0$$
in $\mod A$. Since $X$ is not in $\mathcal{C}$, $V$ is also not in $\mathcal{C}$, and so $Y \rightarrow V
\rightarrow Z$ is an external short path of $\mathcal{C}$, a contradiction. Hence, indeed
$\pd_AX\leqslant 1$. This shows that  $\pd_A \rad P \leqslant 1$, and consequently $\gldim A \leqslant 2$.

Let $N$ be an indecomposable module in $\mod A$. We claim that $\pd_AN\leqslant 1$ or $\id_AN\leqslant 1$.
We have two cases to consider. Assume first  that  $N$ belongs to $\mathcal{C}$. We prove that then
$\id_AN\leqslant 1$. Suppose $\id_AN\geqslant 2$. Then $\Hom_A(\tau^{-}_AN,A)\neq 0$ (see again
\cite[Lemma IV.2.7]{ASS} or \cite[(2.4)]{R2}), and so $\Hom_A(\tau^{-}_AN,P')\neq 0$ for an indecomposable
projective right $A$-module $P'$.  Clearly, we have also $\Hom_A(P',Z)\neq 0$ for an indecomposable direct
summand $Z$  of $M$, because there is a monomorphism $A \rightarrow M^r$. Therefore, since
$\mathcal{C}$ is without projective modules, we obtain a short path
$\tau^{-}_AN \rightarrow P' \rightarrow Z$ in $\mod A$ with $\tau^{-}_AN$ and $Z$ in $\mathcal{C}$ and $P'$
not in $\mathcal{C}$, and so  an external short path of $\mathcal{C}$, a contradiction. Hence, indeed
$\id_AN\leqslant 1$.

Assume now that $N$ is not in $\mathcal{C}$ and $\id_AN\geqslant 2$. We claim that then $\pd_AN\leqslant 1$.
We show first that $\pd_A\tau^{-}_AN\leqslant 1$. Assume $\pd_A\tau^{-}_AN\geqslant 2$. Then we  have
$\Hom_A(D(A),N)=\Hom_A(D(A), \tau_A(\tau^{-}_AN))\neq 0$, and consequently $\Hom_A(M,N)\neq 0$, because
there is an epimorphism $M^s \rightarrow D(A)$ in $\mod A$. On the other hand, the assumption $\id_AN\geqslant 2$
gives $\Hom_A(\tau^{-}_AN,A) \neq 0$. This implies $\Hom_A(\tau^{-}_AN,M)\neq 0$, because there is a monomorphism
$A \rightarrow M^r$ in $\mod A$. Observe also that $N$ and $\tau^{-}_AN$ are not  isomorphic to a direct summand of $M$,
since $N$ and $\tau^{-}_AN$ are not in $\mathcal{C}$. Therefore, applying Lemma 3.1 to the short chain
$\tau^{-}_AN \rightarrow M \rightarrow N$, we conclude that there is in $\mod A$ an external short path
$U \rightarrow V \rightarrow W$ of $\mathcal{C}$, where $U$  and $W$ are indecomposable direct summands of $M$, and
$V=\tau^{-}_AN$ or $V$ is an indecomposable direct summand of the middle term $F$ of an Auslander-Reiten
sequence
$$0 \rightarrow N \rightarrow F \rightarrow \tau^{-}_AN \rightarrow 0$$
in $\mod A$, a contradiction. Hence, indeed $\pd_A\tau^{-}_AN\leqslant 1$.
Take now an indecomposable direct summand $L$ of the middle term $F$ of the above Auslander-Reiten sequence.
We claim that $\pd_AL\leqslant 1$. Choose an irreducible homomorphism $f: L \rightarrow \tau^{-}_AN$ in
$\mod A$. By general theory we know that  $f$ is either a proper monomorphism or a proper epimorphism.
Hence we have two cases to consider.

Assume $f$ is a monomorphism. Then we have in $\mod A$ a short  exact sequence
$$0 \rightarrow L \rightarrow \tau^{-}_AN \rightarrow R \rightarrow 0,$$
and hence an exact sequence of functors
$$\Ext^2_A(\tau^{-}_AN, -) \rightarrow \Ext^2_A(L, -) \rightarrow \Ext^3_A(R, -)$$
on $\mod A$. Since $\pd_A \tau^{-}_AN\leqslant 1$ and $\gldim A\leqslant 2$, we have
$\Ext^2_A(\tau^{-}_AN, -) =0$ and $\Ext^3_A(R, -)=0$, which leads to $\Ext^2_A(L, -)=0$,
or equivalently, to $\pd_AL\leqslant 1$.

Assume $f$ is an epimorphism. Then $\Hom_A(\tau^{-}_AN,M) \neq 0$ forces $\Hom_A(L,M) \neq 0$.
Assume $\pd_AL\geqslant 2$. Then $\Hom_A(D(A), \tau_AL)\neq 0$, and hence $\Hom_A(M, \tau_AL) \neq 0$,
because there is an epimorphism $M^s \rightarrow D(A)$ in $\mod A$. Therefore, $M$ is the middle term of
a short chain $L \rightarrow M \rightarrow \tau_AL$ with $L$ and $\tau_AL$ not in $\mathcal{C}$,
since $N$ is not in $\mathcal{C}$. Applying Lemma 3.1, we conclude that  there is an external short  path
of $\mathcal{C}$ of the form $S \rightarrow V \rightarrow T$, where $S$ and $T$ are indecomposable direct
summands of $M$, and $V=L$ or $V$ is an indecomposable direct summand of the middle term $G$ of an Auslander-Reiten
sequence
$$0 \rightarrow \tau_AL \rightarrow G \rightarrow L \rightarrow 0$$
in $\mod A$, a contradiction. Hence we obtain that $\pd_AL\leqslant 1$. Summing up, we have in $\mod A$
a short  exact sequence
$$0 \rightarrow N \rightarrow F \rightarrow \tau^{-}_AN \rightarrow 0$$
with $\pd_AF\leqslant 1$. Since $\gldim A \leqslant 2$, we have an epimorphism of functors
$\Ext^2_A(F, -) \rightarrow \Ext^2_A(N, -)$, and hence $\Ext^2_A(F, -)=0$ forces $\Ext^2_A(N, -)=0$.
This proves that $\pd_AN\leqslant 1$.

Therefore, $A$  is a quasi-tilted algebra.
\end{proof}
We complete now the proof of Theorem \ref{thm1}.\\
Let $A$ be an algebra, $\mathcal{C}$ a component of $\Gamma_A$ without projective modules and external
short paths, and $B=A/\ann_A(\mathcal{C})$. It follows from Proposition 3.2 that then $B$ is a quasi-tilted
algebra. Moreover, $\mathcal{C}$ is a faithful component of $\Gamma_B$. We have two cases to consider.

Assume $\mathcal{C}$ is acyclic. Then it follows from Propositions 2.2 and 2.4 (and their proofs) that
$B$ is a tilted algebra $\End_{H}(T)$, for a hereditary algebra $H$ and a multiplicity-free tilting $H$-module
$T$, and $\mathcal{C}$ is the connecting component $\mathcal{C}_T$ of $\Gamma_B$ determined by $T$. Further, since
$\mathcal{C}=\mathcal{C}_T$ is without projective modules, we conclude also that $T$ has no non-zero preinjective
direct summands. We note that $\mathcal{C}=\mathcal{C}_T$ is a preinjective component if and only if
$T$ is a preprojective tilting $H$-module, or equivalently, $B$ is a concealed algebra. Clearly,
the component $\mathcal{C}=\mathcal{C}_T$ is regular  if and only if $T$ is a regular  tilting $H$-module.

Assume $\mathcal{C}$ contains an oriented cycle. Since $\mathcal{C}$ is without projective modules, it follows
from the general result of Liu \cite[Theorem 2.5]{L2} that $\mathcal{C}$ is a coray tube. Hence $B$ is
a quasi-tilted algebra with a faithful coray tube $\mathcal{C}$. Applying Propositions 2.2 and 2.4
(and their proofs) and \cite[Theorem 3.4]{LS} we infer that $B$ is the opposite algebra of an almost concealed
canonical algebra and $\mathcal{C}$ is a coray tube of a separating family $\mathcal{T}^B$ of coray tubes
of $\Gamma_B$. We note that, if $\mathcal{C}$ contains an injective module, then the remaining coray
tubes of $\mathcal{T}^B$ are stable tubes. On the other hand, $\mathcal{C}$ is a regular component
if and only if the algebra $B=A/\ann_A(\mathcal{C})$ is a concealed canonical algebra.

\section{Proof of Theorem \ref{thm10}}

\noindent Let $B$ be an algebra and $1_B = e_1+\ldots + e_n$ a decomposition of the identity $1_B$ of $B$ into a sum of pairwise orthogonal
primitive idempotents.
The {\em repetitive category} of $B$ is the self-injective locally bounded $K$-category $\widehat{B}$ with the objects $e_{m,i}$, $m\in\Bbb Z$,
$i\in\{1,\ldots,n\}$, the morphism $K$-modules defined as follows
\[\widehat{B}(e_{m,i},e_{r,j})=
\begin{cases}
e_jBe_i,  & r=m\cr
D(e_iBe_j),  & r=m+1\cr
0,  & {\rm otherwise},\cr
\end{cases}
\]
and the composition of morphisms in $\widehat{B}$ is given by the $B$-$B$-bimodule structures on $B$ and $D(B)$. We denote by $\nu_{\widehat{B}}$
the {\em Nakayama automorphism} of $\widehat{B}$ defined by $\nu_{\widehat{B}}(e_{m,i})=e_{m+1,i}$ for all $m\in\Bbb Z$, $i\in\{1,\ldots,n\}$. An automorphism
$\varphi$ of the $K$-category $\widehat{B}$ is said to be:
\begin{itemize}
\item[$\bullet$] {\em positive} if, for each pair $(m,i)\in{\Bbb Z}\times\{1,\ldots,n\}$, we have $\varphi(e_{m,i})=e_{p,j}$ for some $p\geq m$ and some
$j\in\{1,\ldots,n\}$;
\item[$\bullet$] {\em rigid} if, for each pair $(m,i)\in{\Bbb Z}\times\{1,\ldots,n\}$, we have $\varphi(e_{m,i})=e_{m,j}$ for some $j\in\{1,\ldots,n\}$;
\item[$\bullet$] {\em strictly positive} if it is positive but not rigid.
\end{itemize}
A group $G$ of automorphisms of the $K$-category $\widehat{B}$ is said to be {\em admissible} if $G$ acts freely on the set of objects of $\widehat{B}$
and has finitely many orbits. Following Gabriel \cite{Ga} we may then consider the orbit bounded $K$-category $\widehat{B}/G$, where the objects are the
$G$-orbits of objects of $\widehat{B}$, and hence the basic connected self-injective artin algebra $\bigoplus(\widehat{B}/G)$ given by the direct sum
of all morphism $K$-modules in $\widehat{B}/G$. We will identify $\widehat{B}/G$ with $\bigoplus(\widehat{B}/G)$ and call the {\em orbit algebra} of
$\widehat{B}$ with respect to $G$. We note that the infinite cyclic group $(\nu_{\widehat{B}})$ generated by the Nakayama automorphism $\nu_{\widehat{B}}$
of $\widehat{B}$ is admissible and the orbit algebra $\widehat{B}/(\nu_{\widehat{B}})$ is isomorphic to the trivial extension $B\ltimes D(B)$ of $B$ by
$D(B)$. We refer to \cite[Theorem 5.3]{SY4} for a criterion on a self-injective algebra $A$ to be of the form $\widehat{B}/(\varphi\nu_{\widehat{B}})$
with $B$ an algebra and $\varphi$ a positive automorphism of $\widehat{B}$.

The implication (ii)$\Rightarrow$(i) in Theorem \ref{thm10} follows from the part (iii) of the following proposition.

\begin{proposition} \label{prop4.1}
Let $B$ be a tilted algebra of the form $\End_H(T)$ for a hereditary algebra $H$ and a regular tilting $H$-module $T$, $\mathcal{C}_T$ the connecting
component of $\Gamma_B$ determined by $T$, $\psi$ a positive automorphism of $\widehat{B}$ and $A=\widehat{B}/(\psi\nu_{\widehat{B}})$ the orbit algebra
of $\widehat{B}$ with respect to the infinite cyclic group generated by $\psi\nu_{\widehat{B}}$. Then the following statements hold.
\begin{itemize}
\item[(i)] $B$ is a quotient algebra of $A$.
\item[(ii)] $\mathcal{C}_T$ is a generalized standard regular component of $\Gamma_A$.
\item[(iii)] $\mathcal{C}_T$ has no external short path in $\mod A$ if and only if $\psi=\varphi\nu_{\widehat{B}}$ for a positive automorphism $\varphi$
of $\widehat{B}$.
\end{itemize}
\end{proposition}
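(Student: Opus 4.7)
Part (i) is essentially a formal consequence of the orbit construction. Since $\nu_{\widehat{B}}$ is strictly positive and $\psi$ is positive, the composition $\psi\nu_{\widehat{B}}$ is strictly positive, so the infinite cyclic group $G=\langle\psi\nu_{\widehat{B}}\rangle$ acts freely and admissibly on the objects of $\widehat{B}$. A fundamental domain for $G$ can be chosen to contain the level-$0$ idempotents $e_{0,1},\ldots,e_{0,n}$, which identifies $B$ with the subalgebra of $A$ spanned by the level-$0$ morphisms; the quotient of $A$ by the ideal generated by the remaining (degree-shifting) morphism spaces $D(e_iBe_j)\colon e_{m,i}\to e_{m+1,j}$ of the fundamental domain then recovers $B$.

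For part (ii), I would invoke Galois covering theory: the canonical projection $\widehat{B}\to A$ is a Galois covering with group $G$, inducing a push-down functor $F_\lambda\colon\mod\widehat{B}\to\mod A$ preserving Auslander--Reiten sequences. Each $G$-translate of the connecting component $\mathcal{C}_T\subset\Gamma_B\subset\Gamma_{\widehat{B}}$ is a regular component of $\Gamma_{\widehat{B}}$ (since $T$ is regular), and these translates are pairwise disjoint; the push-down identifies them into a single component of $\Gamma_A$, still denoted $\mathcal{C}_T$. Regularity of $\mathcal{C}_T$ in $\Gamma_A$ follows from the fact that the indecomposable projective and injective $A$-modules descend from projective/injective $\widehat{B}$-modules, none of which lie in any $G$-translate of $\mathcal{C}_T$. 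Generalized standardness I would deduce from the covering identity $\Hom_A(F_\lambda\widetilde X,F_\lambda\widetilde Y)=\bigoplus_{g\in G}\Hom_{\widehat{B}}(\widetilde X,g\widetilde Y)$, combined with Proposition~2.2 applied to the tilted algebra $B$ (giving $\rad_{\widehat{B}}^\infty(\widetilde X,\widetilde Y)=0$ for $g=1$) and the vanishing of morphisms between distinct $G$-translates of the connecting component.

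Part (iii) is the essential content of the proposition. For the backward direction, if $\psi=\varphi\nu_{\widehat{B}}$ then $\psi\nu_{\widehat{B}}=\varphi\nu_{\widehat{B}}^2$ shifts every object by at least two Nakayama steps, so a fundamental domain for $G$ contains two full copies of $B$ inside $\widehat{B}$. A supposed external short path $X\to Y\to Z$ in $\mod A$ with $X,Z\in\mathcal{C}_T$ and $Y$ outside would lift via the covering to a chain $\widetilde X\to g_1\widetilde Y\to g_1g_2\widetilde Z$ in $\mod\widehat{B}$, where $\widetilde X$ lies in the level-$0$ copy of $\mathcal{C}_T$. The shift-by-$2$ condition confines $\widetilde Y$ to a region flanked by two consecutive translates of $\mathcal{C}_T$; one then extracts a short chain inside one copy of $\mod B$, contradicting Proposition~2.2 applied to the regular tilted algebra $B$. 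For the forward direction I would argue contrapositively: if $\psi$ is not of the form $\varphi\nu_{\widehat{B}}$, then there is some object on which $\psi\nu_{\widehat{B}}$ shifts by exactly one Nakayama step, so locally $A$ behaves like the trivial extension $B\ltimes D(B)$. An indecomposable projective--injective $A$-module corresponding to a crossing-over morphism between two consecutive copies of $B$ in $\widehat{B}$ then yields an explicit external short path $X\to P\to Z$ of $\mathcal{C}_T$ in $\mod A$.

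The main obstacle will be the careful covering-theoretic control in part (iii). On the backward side, one must show that lifts of external short paths are forced into a single fundamental domain when the shift is at least~$2$; on the forward side, one must exhibit the factorization through the projective--injective $A$-module explicitly when the shift is minimal. Both rely on a precise understanding of the Hom-spaces between the $G$-translates of the regular connecting component $\mathcal{C}_T$ in $\Gamma_{\widehat{B}}$, which is also the technical input underpinning the generalized-standardness argument in part (ii).
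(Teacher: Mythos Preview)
Your overall framework (Galois covering, push-down functor, analysis of Hom-spaces between $G$-translates) matches the paper's, and parts (i) and (ii) are essentially handled as in the paper. However, there is a genuine gap in your treatment of part (iii), and your backward-direction argument is also looser than what actually works.

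The paper's proof rests on the detailed structure of $\Gamma_{\widehat{B}}$ from \cite{EKS}: one has $\Gamma_{\widehat{B}}=\bigvee_{q\in\mathbb{Z}}(\mathcal{C}_q\vee\mathcal{R}_q)$ with $\nu_{\widehat{B}}(\mathcal{C}_q)=\mathcal{C}_{q+2}$, so $g=\psi\nu_{\widehat{B}}$ sends $\mathcal{C}_0$ to some $\mathcal{C}_p$ with $p\geq 2$. The dichotomy is not ``shift by at least two versus shift by exactly one'': rather, $\psi=\varphi\nu_{\widehat{B}}$ with $\varphi$ positive corresponds to $p\geq 4$, while the negation splits into \emph{two} cases, $p=2$ and $p=3$. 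Your trivial-extension heuristic (``some object shifts by exactly one Nakayama step'') only treats $p=2$, where indeed $\mathcal{C}_T$ becomes sincere and any projective-injective $Q$ yields $X\to Q\to X$. The case $p=3$ is genuinely harder. Here one needs to produce an external path $X\to P\to Z$ with $X\in\mathcal{C}_0$, $Z\in\mathcal{C}_3$, and $P$ a projective-injective lying in $\mathcal{R}_1$. The existence of such a $P$ in $\mathcal{R}_1$ (rather than in $\mathcal{C}_1$) is not automatic: the paper proves it separately (property (f)) by invoking Ringel's theorem that a tilted algebra with two components carrying complete sections is concealed, which rules out all projectives of a given level landing in $\mathcal{C}_{2q+1}$. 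The path itself (property (h)) then uses sincerity of almost all modules in $\mathcal{C}_0$ together with the separating role of $\mathcal{C}_3$. None of this is visible from your proposal.

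For the backward direction, your plan to lift a hypothetical external short path and ``extract a short chain inside one copy of $\mod B$, contradicting Proposition~2.2'' does not quite work as stated: the middle term $\widetilde{Y}$ of the lifted path may lie in some $\mathcal{R}_q$, which consists of $\widehat{B}$-modules that are not $B$-modules, so you cannot simply descend to $\mod B$. The paper instead uses a direct Hom-vanishing statement coming from the support description in \cite{EKS}: $\Hom_{\widehat{B}}(\mathcal{C}_0,\mathcal{C}_p\vee\mathcal{R}_p)=0$ for all $p\geq 2$ (property (g)), which combined with the one-sided ordering in (a) immediately excludes any external short path when $g(\mathcal{C}_0)=\mathcal{C}_p$ with $p\geq 4$. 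You should replace the reduction-to-$B$ idea with this support/Hom-vanishing input.
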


\begin{proof}
The statement (i) follows from the definition of the orbit algebra $\widehat{B}/(\psi\nu_{\widehat{B}})$ and the positivity of $\psi$. In order to show the
statements (ii) and (iii), we present the structure of $\Gamma_{\widehat{B}}$ and $\mod\widehat{B}$ established in \cite[Section 3]{EKS}
(see also \cite[Section 2]{RSS2}).

Let $Q=Q_H$ be the valued quiver of $H$ and $\Delta=Q^{\rm op}$. Then $\Delta$ is a wild quiver with at least $3$ vertices, since we have the regular
tilting $H$-module $T$. Applying \cite[Theorem 3.5]{EKS} we conclude that there exist tilted convex subcategories $B^+=\End_H(T^+)$ and $B^-=\End_H(T^-)$
of $\widehat{B}$, for a tilting $H$-module $T^+$ without non-zero preinjective direct summands and a tilting $H$-module $T^-$ without non-zero
preprojective direct summands, such that for the shifts $B^+_{2q}=B^-_{2q}=\nu_{\widehat{B}}^q(B)$, $B^+_{2q+1}=\nu_{\widehat{B}}^q(B^+)$,
$B^-_{2q+1}=\nu_{\widehat{B}}^q(B^-)$, $q\in\mathbb{Z}$, of $B$, $B^{+}$ and $B^{-}$ inside $\widehat{B}$, the following statements hold:
\begin{itemize}
\item[(a)] The Auslander-Reiten quiver $\Gamma_{\widehat{B}}$ of $\widehat{B}$ has the disjoin union form
\[ \Gamma_{\widehat{B}} = \bigvee_{q\in\mathbb{Z}}(\mathcal{C}_q\vee \mathcal{R}_q), \]
where $\nu_{\widehat{B}}(\mathcal{C}_q)=\mathcal{C}_{q+2}$, $\nu_{\widehat{B}}(\mathcal{R}_q)=\mathcal{R}_{q+2}$,
$\Hom_{\widehat{B}}(\mathcal{R}_q,\mathcal{C}_q)=0$, for any $q\in\mathbb{Z}$, and
$\Hom_{\widehat{B}}(\mathcal{C}_p\vee\mathcal{R}_p,\mathcal{C}_q\vee\mathcal{R}_q)=0$ for all $p>q$. Moreover, each $\mathcal{C}_q$ separates
$\bigvee_{p<q}(\mathcal{C}_p\vee\mathcal{R}_p)$ from $\mathcal{R}_q\vee(\bigvee_{p>q}(\mathcal{C}_p\vee\mathcal{R}_p))$ and each $\mathcal{R}_q$
separates $\bigvee_{p<q}(\mathcal{C}_p\vee\mathcal{R}_p)\vee\mathcal{C}_q$ from $\bigvee_{p>q}(\mathcal{C}_p\vee\mathcal{R}_p)$.
\item[(b)] For each $q\in\mathbb{Z}$, $\mathcal{C}_{2q}$ is the regular connecting component $\mathcal{C}_T$ of $\Gamma_{B^+_{2q}}$ (respectively,
$\Gamma_{B^-_{2q}}$) of the form $\mathbb{Z}\Delta$ determined by the regular tilting $H$-module $T$.
\item[(c)] For each $q\in\mathbb{Z}$, $\mathcal{C}_{2q+1}$ is an acyclic component with the stable part $\mathcal{C}_{2q+1}^s$ of the form
$\mathbb{Z}\Delta$, the torsion-free part $\mathcal{Y}(T^+)\cap\mathcal{C}_{T^+}$ of the connecting component $\mathcal{C}_{T^+}$ of $\Gamma_{B^+_{2q+1}}=\Gamma_{B^+}$
determined by $T^+$ is a full translation subquiver of $\mathcal{C}_{2q+1}$ which is closed under predecessors, and the torsion part
$\mathcal{X}(T^-)\cap\mathcal{C}_{T^-}$ of the connecting component $\mathcal{C}_{T^-}$ of $\Gamma_{B^-_{2q+1}}=\Gamma_{B^-}$
determined by $T^-$ is a full translation subquiver of $\mathcal{C}_{2q+1}$ which is closed under successors.
\item[(d)] For each $q\in\mathbb{Z}$, $\mathcal{R}_{q}$ is an infinite family of components whose stable parts are of the form
$\mathbb{Z}\mathbb{A}_{\infty}$, and the simple composition factors of modules in $\mathcal{R}_q$ are simple $B^-_q$-modules or simple
$B^+_{q+1}$-modules.
\item[(e)] For each $m\in\mathbb{Z}$, the indecomposable projective-injective $\widehat{B}$-modules $P_{m+1,i} = e_{m+1,i}\widehat{B} =
\nu_{\widehat{B}}(e_{m,i}\widehat{B}) = \nu_{\widehat{B}}(P_{m,i})$, $i\in\{1,\ldots, n\}$, lie in $\mathcal{R}_{2m}\vee\mathcal{C}_{2m+1}\vee\mathcal{R}_{2m+1}$. \\

Moreover, we claim that the following fact holds. \\

\item[(f)] For each $q\in\mathbb{Z}$, the family of components $\mathcal{R}_q$ contains at least one indecomposable projective-injective
$\widehat{B}$-module.\\

Assume that $\mathcal{R}_q$, for some $q\in\mathbb{Z}$, consists entirely of regular components of the form $\mathbb{Z}\mathbb{A}_{\infty}$. Then it
follows from the above description of $\Gamma_{\widehat{B}}$ that in the Auslander-Reiten quiver $\Gamma_B$ of the tilted algebra $B=\End_H(T)$ either
all indecomposable projective $B$-modules lie in the unique preprojective component $\mathcal{P}(B)$ or all indecomposable injective $B$-modules lie in
the unique preinjective component $\mathcal{Q}(B)$. Hence, $\Gamma_B$ admits at least two components having a section of type $\Delta$, one of them
the regular connecting component $\mathcal{C}_T$. On the other hand, by a result due to Ringel \cite[Lectures 2 and 3]{R3}, if the Auslander-Reiten
quiver $\Gamma_B$ contains at least two components with complete sections then $B$ is a concealed algebra, a contradiction. \\

We note the following obvious consequence of the description of the simple composition factors of indecomposable $\widehat{B}$-modules presented above. \\

\item[(g)] For each $q\in\mathbb{Z}$, we have $\Hom_{\widehat{B}}(\mathcal{C}_{2q},\mathcal{C}_p\vee\mathcal{R}_p)=0$ for all $p\geq 2q+2$. In particular,
we obtain that $\Hom_{\widehat{B}}(\mathcal{C}_{0},\mathcal{C}_p\vee\mathcal{R}_p)=0$ for all $p\geq 2$. \\

We prove now that: \\

\item[(h)] For each $q\in\mathbb{Z}$ and all but finitely many indecomposable modules $X$ in $\mathcal{C}_{2q}$, there exists a short path in
$\mod\widehat{B}$ of the form $X\to P\to Z$ with an indecomposable projective-injective module $P$ in $\mathcal{R}_{2q+1}$ and $Z$ in $\mathcal{C}_{2q+3}$.\\

Since $\mathcal{C}_{p+2}=\nu_{\widehat{B}}(\mathcal{C}_p)$ and $\mathcal{R}_{p+2}=\nu_{\widehat{B}}(\mathcal{R}_p)$ for each $p\in\mathbb{Z}$, we may
assume that $q=0$. It is known (see \cite[Lemma 4.1]{KS} and \cite[Lemma 1.8]{RSS2}) that all but finitely many indecomposable modules in the regular
connecting component $\mathcal{C}_0=\mathcal{C}_T$ of the tilted algebra $B=\End_H(T)$ are sincere $B$-modules. Further, the simple $B$-modules are exactly
the socles of the indecomposable projective $\widehat{B}$-modules $P_{1,i}=e_{1,i}\widehat{B}$, $i\in\{1,\ldots, n\}$, which are exactly the indecomposable
projective-injective $\widehat{B}$-modules located in $\mathcal{R}_0\vee\mathcal{C}_1\vee\mathcal{R}_1$. Applying (f) we conclude that there is
$i\in\{1,\ldots, n\}$ such that $P_{1,i}$ belongs to $\mathcal{R}_1$. Then, for an arbitrary sincere indecomposable module $X$ in $\mathcal{C}_0$, we
have $\Hom_{\widehat{B}}(X,P_{1,i})\neq 0$, since the socle of $P_{1,i}$ is a composition factor of $X$. Moreover, we have a canonical non-zero
homomorphism $f: P_{1,i}\to P_{2,i}=\nu_{\widehat{B}}(P_{1,i})$, since the top of $P_{1,i}$ is the socle of $P_{2,i}$. Observe that $P_{2,i}$ lies in
$\mathcal{R}_{3}=\nu_{\widehat{B}}(\mathcal{R}_1)$. Finally, since the component $\mathcal{C}_3$ separates
$\bigvee_{p<3}(\mathcal{C}_p\vee\mathcal{R}_p)$ from $\mathcal{R}_3\vee(\bigvee_{p>3}(\mathcal{C}_p\vee\mathcal{R}_p))$ in $\mod\widehat{B}$, we
conclude that the homomorphism $f$ factors through a module $M$ from the additive category $\add(\mathcal{C}_3)$ of $\mathcal{C}_3$. In particular, we
obtain a short path $X\to P\to Z$ with $P=P_{1,i}$ in $\mathcal{R}_1$ and $Z$ an indecomposable module in $\mathcal{C}_3$.
\end{itemize}
Consider now the push-down functor \cite{Ga} $F_{\lambda}: \mod\widehat{B}\to\mod A$ associated to the Galois covering
$F: \widehat{B}\to \widehat{B}/(\psi\nu_{\widehat{B}})$. Since, by the properties (a) - (d), the repetitive category $\widehat{B}$ is locally
support-finite, we conclude by the density theorem of \cite{DS} that the push-down functor $F_{\lambda}$ is dense. In particular, by \cite[Theorem 3.6]{Ga},
the Auslander-Reiten quiver $\Gamma_A$ of $A$ is the orbit quiver $\Gamma_{\widehat{B}}/(\psi\nu_{\widehat{B}})$ of $\Gamma_{\widehat{B}}$ with respect
to the induced action of $(\psi\nu_{\widehat{B}})$ on $\Gamma_{\widehat{B}}$. In fact, for $g=\psi\nu_{\widehat{B}}$, $G=(g)$, and the induced action
of $G$ on $\mod\widehat{B}$ (see \cite{Ga}), the push-down functor $F_{\lambda}: \mod\widehat{B}\to\mod A$ induces isomorphisms of $K$-modules
\[\bigoplus_{r\in\mathbb{Z}}\Hom_{\widehat{B}}(U,g^rV)\buildrel{\thicksim}\over{\hbox to 6mm{\rightarrowfill}}\Hom_{\widehat{B}}(F_{\lambda}(U),F_{\lambda}(V))\]
for all indecomposable modules $U$ and $V$ in $\mod\widehat{B}$. Since $g=\psi\nu_{\widehat{B}}$
with $\psi$ a positive automorphism of $\widehat{B}$ and $\nu_{\widehat{B}}(\mathcal{C}_0)=\mathcal{C}_2$,
we conclude that $\mathcal{C}_T=\mathcal{C}_0=F_{\lambda}(\mathcal{C}_0)$ is a regular component of $\Gamma_A$ consisting entirely of indecomposable
$B$-modules. Then $\mathcal{C}_T$ is a generalized standard component of $\Gamma_A$, because $\mathcal{C}_T$ is a generalized standard component of
$\Gamma_B$, and $B=A/\ann_A(\mathcal{C})$. This shows the required property (ii).

For (iii), assume first that $\psi=\varphi\nu_{\widehat{B}}$ for a positive automorphism $\varphi$ of $\widehat{B}$. Then $g=\varphi\nu_{\widehat{B}}^2$
and $g(\mathcal{C}_T)= g(\mathcal{C}_0)=\mathcal{C}_p$ for some $p\geq 4$. Hence, applying (a) and (g), we conclude that there is no short path in
$\mod\widehat{B}$ of the form $X\to Y\to Z$ with $X$ in $\mathcal{C}_0$, $Z$ in $g^r\mathcal{C}_0$ and $Y$ not in $g^r\mathcal{C}_0$, for any
$r\in\mathbb{Z}$. This shows that $\mathcal{C}_T=F_{\lambda}(\mathcal{C}_0)$ has no external short path in $\mod B$. On the other hand, if
$g=\psi\nu_{\widehat{B}}$ is not of the form $\varphi\nu_{\widehat{B}}^2$ with $\varphi$ a positive automorphism of $\widehat{B}$, then
$g(\mathcal{C}_0)=\mathcal{C}_2$ or $g(\mathcal{C}_0)=\mathcal{C}_3$. In the first case, $\mathcal{C}_T=F_{\lambda}(\mathcal{C}_0)$ is a sincere regular
component in $\mod A$, and then we have an external short path $X\to Q\to X$ of $\mathcal{C}_T$ for an arbitrary sincere indecomposable module $X$
in $\mathcal{C}_T$ and $Q$ an arbitrary indecomposable projective-injective $A$-module. For $g=\psi\nu_{\widehat{B}}$ with $g(\mathcal{C}_0)=\mathcal{C}_3$,
we conclude from (h) that there is an external short path $F_{\lambda}(X)\to F_{\lambda}(P)\to F_{\lambda}(Z)$ of
$\mathcal{C}_T=F_{\lambda}(\mathcal{C}_0)=F_{\lambda}(\mathcal{C}_3)$ for some modules $X\in\mathcal{C}_0$, $Z\in\mathcal{C}_3$ and an indecomposable
projective-injective $A$-module $F_{\lambda}(P)\in F_{\lambda}(\mathcal{R}_1)$. Therefore, the equivalence (iii) holds.
\end{proof}
We complete now the proof of Theorem \ref{thm10} by showing that the implication (i)$\Rightarrow$(ii) also holds.

Let $A$ be a self-injective algebra such that $\Gamma_{A}$ admits
a regular acyclic component $\mathcal{C}$ without external short
paths in $\mod A$. Then it follows from Corollary \ref{cor4} that
$\mathcal{C}$ is a generalized standard component of $\Gamma_A$.
Let $I$ be the annihilator $\ann_A({\mathcal{C}})$ of
$\mathcal{C}$ in $A$ and $B=A/I$. We may choose a set $e_1,...,
e_n$ of pairwise orthogonal primitive idempotents of $A$ such that
$1_A=e_1+...+e_n$ and $\{e_i| 1\leqslant i \leqslant m\}$, for some
$m\leqslant n$, is the set of all idempotents in $\{e_i| 1
\leqslant i \leqslant n\}$ which are not contained in $I$. Then
$e=e_1+...+e_m$ is an idempotent of $A$ such that $e+I$ is the
identity $1_B$ of $B$, uniquely determined by $I$ up to an inner
automorphism of $A$, called the {\em residual identity} of
$B=A/I$. Moreover, $B \cong eAe/eIe$. Further, it follows from
Corollary \ref{cor3} that $B$ is  a tilted algebra of the form $\End_H(T)$, for
a (wild) hereditary algebra $H$ and a regular tilting $H$-module $T$,
and $\mathcal{C}$ is the connecting component $\mathcal{C}_T$ of
$\Gamma_B$ determined by $T$. In particular, the valued quiver
$Q_B$ of $B$ is acyclic. Applying \cite[Proposition 2.3 and
Theorem 5.1]{SY1}, we obtain also that $Ie=l_A(I)$ and
$eI=r_A(I)$, where $l_A(I)$ and $r_A(I)$ denote the left
annihilator and the right annihilator of $I$ in $A$, respectively,
and $\soc(A)$  is contained in $I$. We note that by a theorem of
Nakayama \cite{Na} the left socle $\soc(_AA)$ and right socle
$\soc(A_A)$ of $A$ coincide, and are denoted by $\soc(A)$, which
is an ideal of $A$. Observe that $IeI=0$, and so $Ie$ is a right
$B$-module and $eI$ is a left $B$-module. In fact, by
\cite[Proposition 2.3]{SY1}, $Ie$ is an injective cogenerator in
$\mod B$ and $eI$ is an injective cogenerator in $\mod B^{\op}=
B-\mod$ (the category of finitely generated left $B$-modules). We
have also $l_{eAe}(I)=eIe=r_{eAe}(I)$, and so $I$ is a deforming
ideal in the sense of \cite[(2.1)]{SY1}. The canonical isomorphism
of algebras $eAe/eIe \rightarrow A/I$ allows to consider $I$ as an
$(eAe/eIe)-(eAe/eIe)$-bimodule. Denote by $A[I]$ the direct sum
$(eAe/eIe) \oplus I$ of $K$-modules with the multiplication
\[(b,x)\cdot(c,y)=(bc,by+xc+xy)\]
for $b,c \in eAe/eIe$ and $x,y \in I$. Then, by \cite[Theorem 4.1]{SY1}, $A[I]$ is a basic connected
self-injective artin algebra with the identity $(e+eIe, 1_A-e)$, and the same Nakayama permutation as $A$.
We note that $\{(e_i+eIe,0)| 1\leqslant i\leqslant m\} \cup \{(0,e_j)| m< j \leqslant n\}$ is a complete
set of pairwise orthogonal idempotents of $A[I]$ whose sum is the identity $1_{A[I]}$ of $A[I]$. Further,
by identifying  $x \in I$ with $(0,x) \in A[I]$, we may consider $I$ as an ideal of $A[I]$. Then
$e=(e+eIe,0)$ is a residual identity of $A[I]/I= eAe/eIe \buildrel{\thicksim}\over{\hbox to 6mm{\rightarrowfill}}
A/I$, $eA[I]e=(eAe/eIe)\oplus eIe$ and the canonical algebra epimorphism $eA[I]e \rightarrow eA[I]e/eIe$
is a retraction. Finally, $IeI=(0, IeI)=0$ and $Ie=l_{A[I]}(I)$, $eI=r_{A[I]}(I)$. Applying now
\cite[Theorem 3.8]{SY2} we conclude that $A[I]$ is isomorphic to the orbit algebra $\widehat{B}/(\psi\nu_{\widehat{B}})$
for a positive automorphism $\psi$ of $\widehat{B}$.

It follows also from \cite[Theorem 4.1]{SY1} that  there is a canonical algebra isomorphism
$\phi : A[I]/ \soc(A[I]) \rightarrow A/\soc(A)$. Observe that $\soc(A[I])=\soc (I)= \soc (A)$,
since $\soc(A)$ and $\soc(A[I])$ are contained in $I$. In particular, we obtain the isomorphism
$\phi^{\ast}: \mod (A[I]/ \soc(A[I])) \rightarrow \mod(A/ \soc(A))$ of module categories, induced by $\phi$.
Note that the regular component $\mathcal{C}=\mathcal{C}_T$ of $\Gamma_A$ consists entirely of
$B$-modules, and hence is a component of the Auslander-Reiten quiver $\Gamma_{A/\soc(A)}$ of
$A/\soc (A)$. Clearly, $\mathcal{C}$ is a regular acyclic component of $\Gamma_{A/\soc (A)}$
without external short paths in $\mod (A/\soc(A))$. Denote by $\mathcal{C}'$ the regular  acyclic component
of the Auslander-Reiten quiver $\Gamma_{A[I]/ \soc(A[I])}$ of $A[I]/ \soc(A[I])$ such that
$\phi^{\ast}(\mathcal{C}')=\mathcal{C}$. In fact, we have $I= \ann_{A[I]}(\mathcal{C}')$ and
$A[I]/I \cong eAe/eIe \buildrel{\thicksim}\over{\hbox to 6mm{\rightarrowfill}} A/I=B$ canonically,
so $\mathcal{C}'$ is the regular connecting component of the tilted algebra $eAe/eIe$, isomorphic to $B$.
We claim that $\mathcal{C}'$ has no external short path in $\mod A[I]$. Suppose there is a short path
$X \rightarrow Y \rightarrow Z$ in $\mod A[I]$ with $X$ and $Z$ in $\mathcal{C}'$ but $Y$ not in
$\mathcal{C}'$. Obviously, if $Y$ is not a projective module in $A[I]$, then $Y$ is a module in
$\mod (A[I]/\soc(A[I]))$, and we obtain an external short path $\phi^{\ast}(X) \rightarrow \phi^{\ast}(Y)
\rightarrow \phi^{\ast}(Z)$ of $\mathcal{C}$ in $\mod A$, a contradiction. Assume $Y=P$ is an indecomposable
projective, hence injective, module in $\mod A[I]$. Then there is an indecomposable projective-injective
module $Q$ in $\mod A$ such that $\phi^{\ast}(P/\soc(P))=Q/\soc(Q)$. Since the Nakayama permutations of
the algebras $A[I]$ and $A$ coincide, we conclude also that $\phi^{\ast}(\soc(P))= \soc(Q)$ and
$\phi^{\ast}( \top (P))= \top (Q)$.

Further, since there are non-zero homomorphisms $X\to P$ and $P\to Y$ in $\mod A[I]$, we conclude that $\soc(P)$ is a simple composition factor of $X$
and $\top(P)$ is a simple composition factor of $Y$. But then $\soc(Q)$ is a simple composition factor of $\phi^*(X)$ and $\top(Q)$ is a simple composition
factor of $\phi^*(Z)$. Hence we obtain a short path $\phi^*(X)\to Q\to \phi^*(Z)$ in $\mod A$, with $\phi^*(X)$ and $\phi^*(Z)$ in $\mathcal{C}=\mathcal{C}_T$ and
$Q$ not in $\mathcal{C}$, because $Q$ is projective and $\mathcal{C}$ is regular. Thus $\mathcal{C}$ admits an external short path in $\mod A$.
Summing up, $\mathcal{C}'$ is a regular acyclic component of $\Gamma_{A[I]/\soc(A[I])}$ without external short paths in $\mod (A[I]/\soc(A[I]))$.

Therefore, applying Proposition \ref{prop4.1} to $\widehat{B}/(\psi\nu_{\widehat{B}})$, isomorphic to $A[I]$, we conclude that $A[I]$ is isomorphic to
$\widehat{B}/(\varphi\nu_{\widehat{B}}^2)$ for a positive automorphism $\varphi$ of $\widehat{B}$. Invoking now the structure of the Auslander-Reiten
quiver of $\widehat{B}/(\psi\nu_{\widehat{B}})=\widehat{B}/(\varphi\nu_{\widehat{B}}^2)$ described in the proof of Proposition \ref{prop4.1}, we
conclude that, for any indecomposable projective-injective module $P$ in $\mod A[I]$, the top $\top(P)$ of $P$ is not isomorphic to the socle
$\soc(P)$ of $P$. Since the Nakayama permutations of $A$ and $A[I]$ coincide, we conclude that for any indecomposable projective-injective module $Q$
in $\mod A$, we have that the $\top(Q)$ of $Q$ is not isomorphic to the socle $\soc(Q)$ of $Q$. In particular, if $\nu$ is the Nakayama permutation of $A$
(so of the set $\{1,\ldots, n\}$), and $e_i$ is a primitive summand of $e$ (so $i\in\{1,\ldots, m\}$) then $e_i\neq e_{\nu(i)}$. Applying
\cite[Proposition 3.2]{SY3} we then conclude that the algebras $A$ and $A[I]$ are isomorphic. Therefore, $A$ is isomorphic to
$\widehat{B}/(\varphi\nu_{\widehat{B}}^2)$. This proves that the implication (i)$\Rightarrow$(ii) in Theorem \ref{thm10}  holds.

We end this section with comments concerning the structure of the Auslander-Reiten quivers of self-injective algebras having regular acyclic
components without external short paths.

Let $H$ be a wild hereditary algebra, $T$ a multiplicity-free regular tilting $H$-module, $B=\End_H(T)$ the associated tilted algebra, $\varphi$ a positive
automorphism of $\widehat{B}$, and $A=\widehat{B}/(\varphi\nu_{\widehat{B}}^2)$. Moreover, let $Q=Q_H$ be the valued quiver of $H$ and $\Delta=Q^{\rm op}$.
We use the notation introduced in the proof of Proposition \ref{prop4.1} for description the structure of the Auslander-Reiten quiver $\Gamma_{\widehat{B}}$
of the repetitive category $\widehat{B}$ of $B$. Since the push-down functor $F_{\lambda}: \mod\widehat{B}\to\mod A$, associated to the Galois covering
$F: \widehat{B}\to \widehat{B}/(\varphi\nu_{\widehat{B}}^2)$, is dense and preserves the Auslander-Reiten sequences, we conclude that
$\Gamma_A=\Gamma_{\widehat{B}/(\varphi\nu_{\widehat{B}}^2)}= \Gamma_{\widehat{B}}/(\varphi\nu_{\widehat{B}}^2)$ and may be visualised as follows 
\begin{center}
\setlength{\unitlength}{0.9mm}
\begin{picture}(100,120)(-40,-50)

\put(-12.5,49){\line(1,0){25}}
\put(-12.5,61){\line(1,0){25}}
\put(0,55){\makebox(0,0){\scriptsize{$F_{\lambda}(\mathcal{C}_0)= F_{\lambda}(\mathcal{C}_m)$}}}
\put(24,55){\circle{30}}
\put(-24,55){\circle{20}}
\put(24,55){\makebox(0,0){\scriptsize{$F_{\lambda}(\mathcal{R}_{0})$}}}
\put(-24,55){\makebox(0,0){\scriptsize{$F_{\lambda}(\mathcal{R}_{m-1})$}}}

\put(27,44){\line(1,-1){13}}
\put(36,53){\line(1,-1){13}}
\put(39,41){\makebox(0,0){\scriptsize{$F_{\lambda}(\mathcal{C}_1)$}}}
\put(-27,44){\line(-1,-1){13}}
\put(-36,53){\line(-1,-1){13}}
\put(-39,41){\makebox(0,0){\scriptsize{$F_{\lambda}(\mathcal{C}_{m-1})$}}}
\put(50,28){\circle{30}}
\put(-50,28){\circle{30}}
\put(-50,28){\makebox(0,0){\scriptsize{$F_{\lambda}(\mathcal{R}_{m-2})$}}}
\put(50,28){\makebox(0,0){\scriptsize{$F_{\lambda}(\mathcal{R}_{1})$}}}

\put(43.5,-2){\line(0,1){20}}
\put(57.5,-2){\line(0,1){20}}
\put(51,8){\makebox(0,0){\scriptsize{$F_{\lambda}(\mathcal{C}_{2})$}}}

\put(-43.5,-2){\line(0,1){20}}
\put(-57.5,-2){\line(0,1){20}}
\put(-50.5,8){\makebox(0,0){\scriptsize{$F_{\lambda}(\mathcal{C}_{m-2})$}}}

\put(50,-12){\circle{30}}
\put(-50,-12){\circle{30}}
\put(50,-12){\makebox(0,0){\scriptsize{$F_{\lambda}(\mathcal{R}_{2})$}}}
\put(-50, -12){\makebox(0,0){\scriptsize{$F_{\lambda}(\mathcal{R}_{m-3})$}}}

\put(27,-30){\line(1,1){13}}
\put(36,-38){\line(1,1){13}}

\put(-26,-30){\line(-1,1){13}}
\put(-36,-38){\line(-1,1){13}}

\multiput(0,-39)(3,0){3}{.}

\end{picture}
\end{center}

\noindent for a positive integer $m\geq 4$, where, for each
$i\in\{0, 1, \ldots, m-1\}$, $F_{\lambda}(\mathcal{C}_i)$ is an
acyclic component with the stable part of the form
$\mathbb{Z}\Delta$ and $F_{\lambda}(\mathcal{R}_i)$ is an infinite
family of components with the stable parts of the form
$\mathbb{Z}\mathbb{A}_{\infty}$, containing at least one
indecomposable projective module and a simple module. Concerning
the distribution of simple and projective modules in the
components $F_{\lambda}(\mathcal{C}_i)$, $i\in\{0, 1, \ldots,
m-1\}$, we have two cases (see \cite[Section 5]{EKS}).

{\bf Case 1.} The connecting component $\mathcal{C}_T$ of $\Gamma_B$ contains a simple $B$-module. Then $m=2r$, for $r\geq 2$,
$\varphi\nu_{\widehat{B}}^2=\varrho\nu_{\widehat{B}}^r$ for a rigid automorphism $\varrho$ of $\widehat{B}$, the components $F_{\lambda}(\mathcal{C}_{2i})$,
$i\in\{0, 1, \ldots, r-1\}$, are regular and contain a simple $A$-module, while the components $F_{\lambda}(\mathcal{C}_{2i+1})$,
$i\in\{0, 1, \ldots, r-1\}$, contain a projective module but not a simple module.

We also mention there is always a regular tilting $H$-module $T^*$ such that the connecting component $\mathcal{C}_{T^*}$ of the tilted algebra
$B^*=\End_H(T^*)$ determined by $T^*$ contains a simple $B^*$-module (see \cite[Proposition 2.5]{EKS}).
For a concrete example of such tilted algebra we refer to \cite[Example 6.2]{KS}.

{\bf Case 2.} The connecting component $\mathcal{C}_T$ of $\Gamma_B$ does not contain a simple $B$-module. Then all components
$F_{\lambda}(\mathcal{C}_i)$, $i\in\{0, 1, \ldots, m-1\}$, are regular and without simple modules, and consequently all simple and projective modules are
located in the parts $F_{\lambda}(\mathcal{R}_i)$, $i\in\{0, 1, \ldots, m-1\}$. We note that it is an open problem if in this case $m$ is even,
or equivalently, $\varphi\nu_{\widehat{B}}=\varrho\nu_{\widehat{B}}^r$ for some positive integer $r\geq 2$ and a rigid automorphism $\varrho$
of $\widehat{B}$.

We also mention that, by \cite[Corollary 4]{KS}, there are infinitely many pairwise non-isomorphic tilted algebras $B^*=\End_H(T^*)$ given by regular
tilting $H$-modules $T^*$ such that the connecting component $\mathcal{C}_{T^*}$ of $\Gamma_{B^*}$ determined by $T^*$ does not contain simple modules.

Finally, we note that all regular components of the form $\mathbb{Z}\Delta$ in $\Gamma_A$ have no external short paths. Similarly, for $m\geq 5$,
all non-regular components with the stable parts $\mathbb{Z}\Delta$ in $\Gamma_A$ have no external short paths. On the other hand, for $m=4$ and
$\mathcal{C}_T$ containing a simple module, the non-regular components $F_{\lambda}(\mathcal{C}_1)$ and $F_{\lambda}(\mathcal{C}_3)$ have
external short paths in $\mod A$.

\end{document}